\journalname{...}
\date{ \phantom{b} \vspace{45mm}\phantom{e}}
\def\half{\tfrac{1}{2}}
\newcommand\bfd{{\mathbf d}}
\newcommand\bfe{{\mathbf e}}
\newcommand\bff{{\mathbf f}}
\newcommand\bfg{{\mathbf g}}
\newcommand\bfu{{\mathbf u}}
\newcommand\bfv{{\mathbf v}}
\newcommand\bfw{{\mathbf w}}
\newcommand\bfx{{\mathbf x}}
\newcommand\bfy{{\mathbf y}}
\newcommand\bfz{{\mathbf z}}
\newcommand\bfA{{\mathbf A}}
\newcommand\bfE{{\mathbf E}}
\newcommand\bfK{{\mathbf K}}
\newcommand\bfM{{\mathbf M}}
\newcommand\andquad{\quad\hbox{ and }\quad}
\newcommand\for{\quad\hbox{ for }\quad}
\renewcommand\d{\hbox{\rm d}}
\newcommand{\Ga}{\Gamma}
\newcommand{\laplace}{\Delta}
\newcommand{\nbg}{\nabla_{\Gamma}}
\newcommand{\nbgh}{\nabla_{\Gamma_h}}
\newcommand{\mat}{\partial^{\bullet}}
\DeclareMathOperator{\diff}{\frac{\d}{\d t}}
\newcommand{\inv}{^{-1}}
\newcommand{\la}{\langle}
\newcommand{\nb}{\nabla}
\newcommand{\pa}{\partial}
\newcommand{\R}{\mathbb{R}}
\newcommand{\ra}{\rangle}
\newcommand{\spn}{\textnormal{span}}
\newcommand{\st}{such that}
\def \t {(t)}
\def \to {\rightarrow}
\newcommand\enodes\xs
\newcommand\nnodes\bfx
\newcommand\rnodes\xs
\newcommand\regmass\bfK
\newcommand\mass\bfM
\newcommand\stiff\bfA
\newcommand{\Xs}{X_h^\ast}
\newcommand{\vs}{\bfv_\ast}
\newcommand{\xs}{\bfx_\ast}
\newcommand{\dotxs}{\dot\bfx_\ast}
\newcommand{\ev}{\bfe_\bfv}
\newcommand{\ex}{\bfe_\bfx}
\newcommand{\dv}{\bfd_\bfv}
\newcommand{\dx}{\bfd_\bfx}
\newcommand{\bbk}{\color{black}}
\newcommand{\ebk}{\color{black}}
\newcommand{\MCF}{\beta \bfA}
\begin{document}

\title{Linearly implicit full discretization of  \\ surface evolution}
%\title{Linearly implicit full discretization of some surface-evolution equations}

\titlerunning{Linearly implicit full discretization of surface evolution}        
% if too long for running head

\author{Bal\'{a}zs~Kov\'{a}cs \and
        Christian~Lubich 
}

\authorrunning{B.~Kov\'{a}cs and Ch.~Lubich} % if too long for running head

\institute{B.~Kov\'{a}cs, Ch.~Lubich \at
              Mathematisches Institut, Universit\"at T\"{u}bingen,\\
              Auf der Morgenstelle 10, 72076 T\"{u}bingen, Germany \\
              \email{\{kovacs,lubich\}@na.uni-tuebingen.de}
}

%\date{\today}

\maketitle

\begin{abstract}
Stability and convergence of full discretizations of various surface evolution equations are studied in this paper. The proposed discretization combines a higher-order evolving-surface finite element method (ESFEM) for space discretization with higher-order linearly implicit backward difference formulae (BDF) for time discretization. The stability of the full discretization is studied in the matrix--vector formulation of the numerical method. The geometry of the problem enters into the bounds of the consistency errors, but does not enter into the proof of stability. Numerical examples illustrate the convergence behaviour of the full discretization.

\keywords{Surface evolution\and velocity law  \and evolving surface finite element method \and time discretization \and linearly implicit backward difference formulae  \and stability \and convergence analysis}
\subclass{35R01 \and 65M60 \and 65M15 \and 65M12}
\end{abstract}

\section{Introduction}
In this paper we study full discretizations of  geometric evolution equations using the evolving surface finite element method (ESFEM) for space discretization and linearly implicit backward differentiation formulae (BDF) for time discretization. We consider the situation where the velocity $v(x,t)$ of a point $x$ on an evolving two-dimensional closed surface $\Gamma(t)\subset\R^3$ at time $t$ is determined by one of the following velocity laws, for which finite element semi-discretization in space was studied in \cite{soldriven}:

(i) {\it Regularized mean curvature flow:\/} for $x\in\Ga(t)$,
\begin{equation}\label{v-eq}
	%\mat u(x,t) + u(x,t) \nb_{\Ga(t)} \cdot v(x,t) - \laplace_{\Ga(t)} u(x,t)  =&\ 0, \qquad x\in \Gamma(t),\ 0 < t \le T, \\
	v(x,t) - \alpha \laplace_{\Ga(t)} v(x,t) = -\beta H_{\Ga(t)}(x)\, \nu_{\Ga(t)}(x) +g\bigl(x,t\bigr)\, \nu_{\Ga(t)}(x),
	%\quad\ x\in \Ga(t),
\end{equation}
where $\laplace_{\Ga(t)}$ is the Laplace--Beltrami operator on the surface $\Gamma(t)$, $H_{\Ga(t)}$ is mean curvature, $\nu_{\Ga(t)}$ is the outer normal, $g$ is a smooth real-valued function, and $\alpha>0$ and $\beta\ge 0$ are fixed parameters. This velocity law can be viewed as an elliptically regularized mean curvature flow with an additional driving term in the direction of the normal vector.
In \cite{soldriven} this elliptic regularization allowed us to give a complete stability and convergence analysis of the ESFEM semi-discretization, for finite elements of polynomial degree at least two. In contrast, for pure mean curvature flow (that is, $\alpha=0$), no convergence results  appear to be known for ESFEM on two-dimensional closed surfaces.
%In that paper, the surface-evolution equation was further coupled with a reaction-diffusion equation on the surface.

(ii)  {\it A dynamic velocity law:\/} for $x\in\Ga(t)$,
\begin{equation} \label{v-dyn}
    \mat v(x,t) + v(x,t) \,\nb_{\Ga(t)} \cdot v (x,t) - \alpha \laplace_{\Ga(t)} v(x,t) = g(x,t) \, \nu_{\Ga(t)}(x),
\end{equation}
where $\mat v$ denotes the material time derivative of $v$ and $\nb_{\Ga} \cdot v$ denotes the surface divergence of $v$;

(iii) The case where the velocity law (i) or (ii)  is coupled to diffusion on the evolving surface, as in \cite{soldriven}.

We note that in all these cases, the considered velocity $v$ is in general not normal to the surface, but contains tangential components.

The rigorous study of the stability and convergence properties of full discretizations obtained by combining the ESFEM with various time discretizations for problems on evolving surfaces was begun in the papers \cite{DziukElliott_fulldiscr} (implicit Euler method), \cite{DziukLubichMansour_rksurf} (implicit Runge--Kutta methods) and \cite{LubichMansourVenkataraman_bdsurf} (BDF methods). These papers studied a linear parabolic equation on a {\it given} moving closed surface $\Gamma(t)$. 
Convergence of full discretizations of that problem using higher-order evolving surface finite elements is studied in \cite{highorder}.
%Arbitrary Euler--Lagrangian (ALE) variants of the ESFEM method for this equation are studied in \cite{ElliottStyles_ALEnumerics,ElliottVenkataraman_ALE,KovacsPower_ALEdiscrete}.
Convergence properties  of full discretizations for quasi- and semilinear parabolic equations on prescribed moving surfaces are studied in \cite{KovacsPower_quasilinear}. For curves instead of two-dimensional surfaces,
convergence of full discretizations of curve-shortening flow coupled to diffusion is studied by Barrett, Deckelnick \& Styles \cite{BDS}.

The main difficulty in proving the convergence of the full discretization of the surface-evolution equation in (i)--(iii) is the proof of stability in the sense of bounding errors in terms of defects in the discrete equations. The proof requires some auxiliary results from \cite{soldriven}, which relate different finite element surfaces. For \eqref{v-eq}, the stability proof just uses the zero-stability of the BDF methods up to order~6. For \eqref{v-dyn}, it is based on energy estimates that become available for BDF methods up to order~5 by  the multiplier technique of Nevanlinna and Odeh \cite{NevanlinnaOdeh}, which in turn is based on the $G$-stability theory of Dahlquist \cite{Dahlquist}. These techniques were originally developed for stiff ordinary differential equations and have recently been used for linear parabolic equations on given moving surfaces in \cite{LubichMansourVenkataraman_bdsurf} and for various quasilinear parabolic problems in \cite{AkrivisLubich_quasilinBDF,AkrivisLiLubich_quasilinBDF,KovacsPower_quasilinear}.

The paper is organized as follows.

In Section~\ref{section: problem} we describe the problem and the numerical methods. We recall the basics of the evolving surface finite element method and give its matrix--vector formulation, and we formulate the linearly implicit BDF time discretization.

In Section~\ref{section: main result} we present the main result for \eqref{v-eq}, which gives optimal-order convergence estimates for the full discretization by ESFEM of polynomial degree at least~$2$ and linearly implicit BDF methods up to order 6. This result is proven in Sections~4 to~7.

Section~\ref{section: aux} contains auxiliary results for the stability analysis of the discretized velocity law \eqref{v-eq}. We collect results from \cite{soldriven} that relate different finite element surfaces to one another. We also include  a new auxiliary result for the linearly implicit BDF time discretization. 

Section~\ref{section: proof of stability} contains the stability analysis, which works with the matrix--vector formulation of the discrete equations. Like the proof of stability of the ESFEM spatial semi-discretization in \cite{soldriven}, it does not use geometric arguments.

Section~\ref{section: consistency} gives estimates for the consistency errors, that is, for the defects on inserting the interpolated exact solution into the discrete equations.

Section~\ref{section: proof completed} proves the convergence result for the full discretization of \eqref{v-eq} by combining the results of the previous sections.

In Section~\ref{section: dynamic} we extend the convergence analysis to the full discretization of the dynamic velocity law \eqref{v-dyn}. This is done for BDF methods up to order~5 using energy estimates based on the Nevanlinna--Odeh multiplier technique.

In Section~\ref{section: coupled} we extend the convergence result for the full discretization to the case where the velocity law \eqref{v-eq} or \eqref{v-dyn} is coupled to diffusion on the evolving surface, as studied in \cite{soldriven} for the semi-discretization. The result is obtained by combining the techniques of \cite{soldriven} and \cite{LubichMansourVenkataraman_bdsurf} with those of Sections~\ref{section: aux} to \ref{section: proof completed} of the present paper.

Section~\ref{section: numerics} presents numerical experiments using quadratic ESFEM that illustrate the numerical behaviour of the proposed full discretization.

We use the notational convention to denote vectors in $\R^3$ by italic letters, but to denote finite element nodal vectors in  $\R^{3N}$ by boldface lowercase letters and finite element mass and stiffness matrices by boldface capitals. All boldface symbols in this paper will thus be related to the matrix--vector formulation of the ESFEM.

\section{Problem formulation and  ESFEM / BDF full discretization}
\label{section: problem}

We use the same setting as in our previous work \cite{soldriven}. We recall basic notions,  but refer to Section~2 of \cite{soldriven} for a more detailed description.

\subsection{Basic notions and notation}
\label{subsection: basic notions}
%
%We use the same setting as in our previous work \cite{soldriven}. We recall basic notions,  but refer to Section~2 of \cite{soldriven} for a more detailed description.
We consider the evolving two-dimensional closed surface $\Gamma(t)\subset\R^3$ as the image
$$
	\Ga(t) = \{ X(q,t) \,:\, q \in \Ga^0 \}
$$
of a regular vector-valued function $X:\Ga^0\times [0,T]\to \R^3$, where $\Ga^0$ is the smooth closed initial surface, and $X(q,0)=q$.
%In view of the subsequent numerical discretization, 
% It is convenient to think of $X(q,t)$ as the position at time $t$ of a moving particle with label $p$, and of $\Ga(t) $ as a collection of such particles.
To indicate the dependence of the surface on $X$, we write
$$
	\Ga(t) = \Ga(X(\cdot,t)), \quad\hbox{ or briefly}\quad \Ga(X)
$$
when the time $t$ is clear from the context. The position $X(q,\cdot)$ is related to the {\it velocity} $v(x,t)\in\R^3$ at the point $x=X(q,t)\in\Gamma(t)$  via the ordinary differential equation
\begin{equation}
\label{velocity}
	\partial_t X(q,t)= v(X(q,t),t).
\end{equation}
% Note that for a known velocity field  $v:\R^3\times [0,T] \to \R^3$,  the position $X(q,t)$ at time $t$ of the particle with label $p$
% is obtained by solving the ordinary differential equation \eqref{velocity} from $0$ to $t$ for a fixed $p$.
% 
% For a function $u(x,t)$ ($x\in \Gamma(t)$, $0\le t \le T$) we denote the {\it material derivative} as
% $$
% \mat u(x,t) = \frac \d{\d t} \,u(X(q,t),t) \quad\hbox{ for } \ x=X(q,t).
% $$
For $x\in\Gamma(t)$ and $0\le t \le T$,
we denote by $\nu_{\Ga(X)}(x)$ the outer normal, by $\nabla_{\Ga(X)}u(x,t)$ the tangential gradient of a real-valued function $u$ on $\Ga(t)$, and by $\laplace_{\Ga(X)} u(x,t)$ the Laplace--Beltrami operator applied to $u$.
% , and by $\nb_{\Ga(X)} \cdot v(x,t)$ the tangential divergence of $v$; see, e.g., \cite{DziukElliott_acta} for these notions.

\subsection{Weak formulation of the surface-evolution equation}
\label{section: problem and weak form}
% 
% As outlined in the introduction, as a prototype model for diffusion driven surface evolution, we consider an evolving surface that moves according to an elliptically regularized velocity law driven by a prescribed forcing term:
% \begin{equation}
% \label{eq: problem}
%     \begin{aligned}
%         v - \alpha \Delta_{\Ga(X)} v - \beta \Delta_{\Ga(X)} X = g(\cdot,t) \nu_{\Ga(X)} ,
%     \end{aligned}
% \end{equation}
% where $g:\R^3\times\R\to\R$ is a given Lipschitz continuous function of $(x,t)$, and $\alpha>0$ and $\beta>0$ are fixed parameters.
% Here \(\Delta_{\Ga(X)} X\) is a suggestive notation for \( -  H\normal\), where \(H\)
% denotes the mean curvature of  the surface $\Ga(X)$. (More
% precisely,  $\Delta_{\Ga(X)} \id = - H \nu_{\Ga(X)}$.)
% 
% The surface-evolution equation \eqref{v-eq}  is considered together with the collection of ordinary differential equation \eqref{velocity} for every label $p$, and with initial values for $X$.

The space discretization is based on the weak formulation of the surface-evolution equation \eqref{v-eq}, which reads as follows:
Find $v(\cdot,t) \in  W^{1,\infty}(\Ga(X(\cdot,t)))^3 $ such that for all test functions $\psi(\cdot,t)  \in H^1(\Ga(X(\cdot,t) ))^3$,
\begin{equation}
\label{weak form}
    \begin{aligned}
        \int_{\Ga(X)} \!\!\!\! v \cdot \psi 
        &\ + \alpha \int_{\Ga(X)} \!\!\!\! \nabla_{\Gamma(X)} v \cdot \nabla_{\Gamma(X)} \psi \\
        &\ + \beta \int_{\Ga(X)} \!\!\!\! \nabla_{\Gamma(X)} X \cdot \nabla_{\Gamma(X)} \psi 
        = \int_{\Ga(X)} \!\! g \,\nu_{\Ga(X)} \cdot \psi ,
    \end{aligned}
\end{equation}
alongside with the ordinary differential equation \eqref{velocity} for the positions $X$ determining the surface $\Gamma(X)$. (More precisely, the term $\nabla_{\Gamma(X)} X$ should read $\nabla_{\Gamma(X)} {\rm id}_{\Gamma(X)}$.)

We assume throughout this paper that the problem \eqref{v-eq} or \eqref{weak form} admits a  unique solution with sufficiently high Sobolev regularity on the time interval $[0,T]$ for the given initial data $X(\cdot,0)$. We assume further that the flow map $X(\cdot,t):\Gamma_0\to \Gamma(t)\subset\R^3$ is non-degenerate for $0\le t \le T$, so that $\Gamma(t)$ is a regular surface.

\subsection{Evolving surface finite elements}
\label{section:ESFEM}

From Section~2.3 of \cite{soldriven} we recall the description of the surface finite element discretization of our problem, which is based on \cite{Dziuk88} and \cite{Demlow2009}. We use simplicial elements and continuous piecewise polynomial basis functions of degree $k$, as defined in \cite[Section 2.5]{Demlow2009}.

We triangulate the given smooth surface $\Gamma^0$ by an admissible family of triangulations $\mathcal{T}_h$ of decreasing maximal element diameter $h$; see \cite{DziukElliott_ESFEM} for the notion of an admissible triangulation, which includes quasi-uniformity and shape regularity. For a momentarily fixed $h$, we denote by $\bfx^0=(x_1^0,\dots,x_N^0)$ the vector in $\R^{3N}$ that collects all $N$ nodes of the triangulation. By piecewise polynomial interpolation of degree $k$, the nodal vector defines an approximate surface $\Gamma_h^0$ that interpolates $\Gamma^0$ in the nodes $x_j^0$. We will evolve the $j$th node in time, denoted $x_j(t)$ with $x_j(0)=x_j^0$, and collect the nodes at time $t$ in a column vector in $\R^{3N}$,
$$
\bfx(t) = (x_1(t),\dots,x_N(t)) \in \R^{3N}.
$$
We just write $\bfx$ for $\bfx(t)$ when the dependence on $t$ is not important.

By piecewise polynomial interpolation on the  plane reference triangle that corresponds to every
 curved triangle of the triangulation, the nodal vector $\bfx$ defines a closed surface denoted by $\Gamma_h[\bfx]$. We can then define finite element {\it basis functions}
$$
\phi_j[\bfx]:\Gamma_h[\bfx]\to\R, \qquad j=1,\dotsc,N,
$$
which have the property that on every triangle their pullback to the reference triangle is polynomial of degree $k$, and which satisfy
\begin{equation*}
    \phi_j[\bfx](x_k) = \delta_{jk} \quad  \text{ for all } j,k = 1,  \dotsc, N .
\end{equation*}
These functions span the finite element space on $\Gamma_h[\bfx]$,
\begin{equation*}
    S_h[\bfx] = S_h(\Gamma_h[\bfx])=\spn\big\{ \phi_1[\bfx], \phi_2[\bfx], \dotsc, \phi_N[\bfx] \big\} .
\end{equation*}
For a finite element function $u_h\in S_h[\bfx]$ the tangential gradient $\nabla_{\Gamma_h[\bfx]}u_h$ is defined piecewise on each element.
We set
$$
	X_h(q_h,t) = \sum_{j=1}^N x_j(t) \, \phi_j[\bfx(0)](q_h), \qquad q_h \in \Gamma_h^0,
$$
which has the properties that $X_h(q_j,t)=x_j(t)$ for $j=1,\dots,N$, that $X_h(q_h,0)=q_h$ for all $q_h\in\Gamma_h^0$, and
$$
	\Gamma_h[\bfx(t)]=\Gamma(X_h(\cdot,t)).
$$
The {\it discrete velocity} $v_h(x,t)\in\R^3$ at a point $x=X_h(q_h,t)\in \Gamma(X_h(\cdot,t))$ is given by
$$
	\partial_t X_h(q_h,t) = v_h(X_h(q_h,t),t).
$$
In view of the transport property of the basis functions  \cite{DziukElliott_ESFEM},
$$
	\frac\d{\d t} \Bigl( \phi_j[\bfx(t)](X_h(q_h,t)) \Bigr) =0 ,
$$
%which by integration from $0$ to $t$ yields
%$$
%	\phi_j[\bfx(t)](X_h(q_h,t)) = \phi_j[\bfx(0)](q_h).
%$$
the discrete velocity equals, for $x \in \Gamma_h[\bfx(t)]$,
$$
	v_h(x,t) = \sum_{j=1}^N v_j(t) \, \phi_j[\bfx(t)](x) 
	\qquad \hbox{with } \ v_j(t)=\dot x_j(t),
$$
where the dot denotes the time derivative $\d/\d t$. 
Hence, the nodal vector of the discrete velocity is $\bfv=\dot\bfx$.

% 
% The {\it discrete material derivative} of a finite element function
% $
% 	u_h(x,t) = \sum_{j=1}^N u_j(t) \, \phi_j[\bfx(t)](x), 
% %	\qquad 
% 	\,
% 	x \in \Gamma_h\bigl(\bfx(t)\bigr),
% $
% %is defined as 
% %$$
% %	\mat_h u_h(x,t) = \frac\d{\d t} u_h(X_h(q_h,t),t) \qquad\hbox{for }\ x=X_h(q_h,t).
% %$$
% %By the transport property of the basis functions, this is just
% is defined as, by the transport property of the basis functions, (cf.\ Section~2.3 of \cite{soldriven}), 
% $$
% 	\mat_h u_h(x,t) = \sum_{j=1}^N \dot u_j(t) \, \phi_j[\bfx(t)](x), \qquad x \in \Gamma_h\bigl(\bfx(t)\bigr).
% $$

\subsection{ESFEM spatial semi-discretization of the evolving-surface problem}
\label{subsection:semi-discretization}

The finite element spatial semi-discretization of the problem \eqref{weak form} reads as follows: Find the unknown nodal vector $\bfx(t)\in \R^{3N}$ and the unknown finite element function $v_h(\cdot,t)\in S_h[\bfx(t)]^3$  such that, for all $\psi_h(\cdot,t)\in S_h[\bfx(t)]^3$,
\begin{equation}
\label{uh-vh-equation}
    \begin{aligned}
        \int_{\Ga_h[\bfx]}\!\!\! v_h \cdot \psi_h 
        &\ + \alpha \int_{\Ga_h[\bfx]} \!\!\!\!\! \nabla_{\Ga_h[\bfx]}  v_h \cdot \nabla_{\Ga_h[\bfx]}  \psi_h \\
        &\ + \beta \int_{\Ga_h[\bfx]} \!\!\!\!\! \nabla_{\Ga_h[\bfx]}  X_h \cdot \nabla_{\Ga_h[\bfx]}  \psi_h 
        = \int_{\Ga_h[\bfx]} \!\! g \,\nu_{\Ga_h[\bfx]} \cdot \psi_h,
%        \\
%        \pa_t X_h(\cdot,t) =&\ v_h\big( X_h(\cdot,t),t \big) ,
    \end{aligned}
%    \right.
\end{equation}
and
\begin{equation}\label{xh}
\partial_t X_h(q_h,t) = v_h(X_h(q_h,t),t), \qquad q_h\in\Ga_h^0.
\end{equation}
The initial values for the nodal vector $\bfx$ of the initial positions are taken as the exact initial values at the nodes $x_j^0$ of the triangulation of the given initial surface $\Gamma^0$:
$$
	x_j(0) = x_j^0, \qquad j=1,\dotsc,N.
$$

\subsection{Matrix--vector formulation}\label{subsection:DAE}
% We now show that the nodal vector $\bfv\in\R^{3N}$ of the finite element function $v_h$ together with the surface nodal vector $\bfx\in\R^{3N}$ satisfy a system of differential-algebraic equations (DAEs).
% Using the above finite element setting,  we set (omitting the argument $t$)
% \begin{align*}
%     v_h =&\ \sum_{j=1}^N v_j \phi_j[\bfx], \qquad v_h(x_j)=v_j \in \R^3 ,
% \end{align*}
% and collect the nodal values in the column vector $\bfv=(v_j) \in \R^{3N}$.
We define the surface-dependent mass matrix $\bfM(\bfx)$ and stiffness matrix $\bfA(\bfx)$ on the surface determined by the nodal vector $\bfx$ (cf.~\cite[Section~2.5]{soldriven}):
\begin{equation*}
    \begin{aligned}
        \bfM(\bfx)\vert_{jk} =&\ \int_{\Ga_h[\bfx]} \! \phi_j[\bfx] \phi_k[\bfx] , \\
        \bfA(\bfx)\vert_{jk} =&\ \int_{\Ga_h[\bfx]} \! \nb_{\Ga_h} \phi_j[\bfx] \cdot \nb_{\Ga_h} \phi_k[\bfx] ,
    \end{aligned}
    \qquad (j,k = 1,  \dotsc, N) .
\end{equation*}
We further let  (with the identity matrix $I_3 \in \R^{3 \times 3}$) 
$$
	\bfM^{[3]}(\bfx)= I_3 \otimes \bfM(\bfx) \quad \textnormal{and} \quad \bfA^{[3]}(\bfx) = I_3 \otimes \bfA(\bfx) , 
$$
and then define
\begin{equation}
\label{eq: K matrix def}
    \bfK(\bfx)= \bfM^{[3]}(\bfx) + \alpha \bfA^{[3]}(\bfx) .  
\end{equation}
When no confusion can arise, we write in the following $\bfM(\bfx)$ for $\bfM^{[3]}(\bfx)$, $\bfA(\bfx)$ for $\bfA^{[3]}(\bfx)$ and
$\| \cdot \|_{H^1(\Gamma)}$ for $\| \cdot \|_{H^1(\Gamma)^3}$, etc.

The right-hand side vector $\bfg(\bfx,t)\in\R^{3N}$ is given by
\begin{equation*}
 \begin{aligned}
    \bfg(\bfx,t)\vert_{j+N(\ell-1)} &= \int_{\Ga_h[\bfx]} g(\cdot,t) \,\bigl(\nu_{\Ga_h[\bfx]}\bigr)_\ell \, \phi_j[\bfx],
    \end{aligned}
\end{equation*}
for $j = 1,  \dotsc, N$ and $\ell=1,2,3$.

We then obtain from \eqref{uh-vh-equation}--\eqref{xh} the following system of ordinary differential equations (ODEs) for the nodal vectors $\bfx(t)\in\R^{3N}$:
\begin{equation}
\label{eq: ODE form}
	\bfK(\bfx) \dot \bfx + \MCF(\bfx) \bfx = \bfg(\bfx,t) .
\end{equation}

\subsection{Linearly implicit BDF time discretization}

We apply a $p$-step linearly implicit backward difference formula (BDF) for $p \leq 6$ as a time discretization to the ODE system \eqref{eq: ODE form}. For a step size $\tau>0$, and with $t_n = n \tau \leq T$, we determine the approximation $\bfx^n$ to $\bfx(t_n)$ by the fully discrete system of linear equations
\begin{equation}
\label{BDF}
	\begin{aligned}
		\bfK(\widetilde \bfx^n) \bfv^n + \MCF(\widetilde \bfx^n) \bfx^n = &\ \bfg(\widetilde \bfx^n,t_n) , \\
		\bfv^n =&\ \frac{1}{\tau} \sum_{j=0}^p \delta_j \bfx^{n-j}  ,
	\end{aligned} 
	\qquad\ n \geq p ,
\end{equation}
where the extrapolated position vector $\widetilde \bfx^n$ is defined by
\begin{equation}
\label{eq: extrapolation of u def}
	\widetilde \bfx^n = \sum_{j=0}^{p-1} \gamma_j \bfx^{n-1-j} , \qquad n \geq p .
\end{equation}
The starting values $\bfx^0, \bfx^1, \dotsc, \bfx^{p-1}$ are assumed to be given. They can be precomputed in a way as is usual with multistep methods: using lower-order methods with smaller step sizes or using an implicit Runge--Kutta method. 

The coefficients are given by $\delta(\zeta)=\sum_{j=0}^p \delta_j \zeta^j=\sum_{\ell=1}^p \frac{1}{\ell}(1-\zeta)^\ell$ and $\gamma(\zeta) = \sum_{j=0}^{p-1} \gamma_j \zeta^j = (1 - (1-\zeta)^p)/\zeta$. 
The classical BDF method is known to be zero-stable for $p\leq6$ and to have order $p$; see \cite[Chapter~V]{HairerWannerII}.
This order is retained by the linearly implicit variant using the above coefficients $\gamma_j$; cf.\, \cite{AkrivisLubich_quasilinBDF,AkrivisLiLubich_quasilinBDF}.

We note that the method requires solving a linear system with the symmetric positive definite matrix $\frac{\delta_0}\tau \bfK(\widetilde \bfx^n) + \MCF(\widetilde \bfx^n)$ in the $n$th time step.

From the vectors $\bfx^n =(x_j^n)$ and $\bfv^n = (v_j^n)$ we obtain  position and velocity approximations to $X(\cdot,t_n)$ and $v(\cdot,t_n)$ as
\begin{equation}\label{x-v-approx}
  \begin{split}
	X_h^n(q_h) &= \sum_{j=1}^N x_j^n \, \phi_j[\bfx(0)](q_h) \quad\hbox{ for } q_h \in \Gamma_h^0,\\
        v_h^n(x)      &= \sum_{j=1}^N v_j^n \, \phi_j[\bfx^n](x)  \qquad\hbox{ for } x \in \Gamma_h[\bfx^n].
  \end{split}
\end{equation}

\subsection{Lifts}
\label{section:lifts}

Here we recapitulate \cite[Section~2.6]{soldriven}. 
In the error analysis we need to compare functions on three different surfaces: the {\it exact surface} $\Gamma(t)=\Gamma(X(\cdot,t))$, the {\it discrete surface} $\Gamma_h(t)=\Gamma_h[\bfx(t)]$, and the {\it interpolated surface} $\Gamma_h^*(t)=\Gamma_h[\xs(t)]$, where $\xs(t)$ is the nodal vector collecting the grid points $x_{*,j}(t)=X(q_j,t)$ on the exact surface. In the following definitions we omit the argument $t$ in the notation. 

For a finite element function $w_h:\Gamma_h\to\R^m$ ($m=1$ or 3) on the discrete surface, with nodal values $w_j$, we denote by $\widehat w_h:\Gamma_h^*\to\R^m$ the finite element function  on the interpolated surface that has the same nodal values:
$$
\widehat w_h  = \sum_{j=1}^N w_j \phi_j[\xs].
$$
 The transition between the interpolated surface and the exact surface is  done by the
 \emph{lift operator}, which was introduced  for linear surface approximations  in \cite{Dziuk88};  see also \cite{DziukElliott_ESFEM,DziukElliott_L2}.
 Higher-order generalizations have been studied in \cite{Demlow2009}.  The lift operator $l$ maps a function on the interpolated surface $\Gamma_h^*$ to a function on the exact surface $\Gamma$, provided that $\Gamma_h^*$ is sufficiently close to~$\Gamma$.

The exact regular surface $\Ga(X(\cdot,t))$ can be represented by a (sufficiently smooth) signed distance function $d : \R^3 \times [0,T] \to \R$, cf.~\cite[Section~2.1]{DziukElliott_ESFEM}, such that
%\begin{equation}
%\label{eq: distance function}
	$\Ga(X(\cdot,t)) = \big\{ x\in \R^3 \mid d(x,t) = 0 \big\} \subset \R^3$ .
%\end{equation}
Using this distance function,  the lift of a continuous function $\eta_h \colon \Ga_h^* \to \R^m$ is defined as
\begin{equation*}
    \eta_{h}^{l}(y) := \eta_h(x), \qquad x\in\Ga_h^*,
\end{equation*}
where for every $x\in \Ga_h^*$ the point $y=y(x)\in\Ga$ is uniquely defined via
%\begin{equation*}
%\label{eq: lift defining equation}
    $y = x - \nu(y) d(x)$.
%\end{equation*}
%For functions taking values in $\R^3$ the lift is componentwise.
%By $\eta^{-l}$ we denote the function on $\Ga_h^*$ whose lift is $\eta$.

We denote the composed lift $L$ from finite element functions on $\Gamma_h$ to functions on $\Gamma$ via $\Gamma_h^*$ by
$$
w_h^L = (\widehat w_h)^l.
$$

\section{Statement of the main result: fully discrete error bound}
\label{section: main result}

We formulate the main result of this paper, which yields optimal-order error bounds for the ESFEM /  BDF full discretization of  the surface-evolution equation \eqref{v-eq}, for finite elements of polynomial degree $k\ge 2$ and BDF methods of order $p\le 6$. We denote by $\Gamma(t_n)=\Gamma(X(\cdot,t_n))$ the exact surface and by $\Gamma_h^n=\Gamma(X_h^n)=\Gamma_h[\bfx^n]$ the discrete surface at time $t_n$. For the lifted position function we introduce the notation
$$
(x_h^n)^L (x)  =  (X_h^n)^L(q) \in \Gamma_h^n \qquad\hbox{for}\quad x=X(q,t_n)\in\Gamma(t_n).
$$

\begin{theorem}
\label{theorem: main}
    Consider the ESFEM / BDF linearly implicit full discretization \eqref{BDF} of the surface-evolution equation \eqref{v-eq}, using finite elements of polynomial degree~$k\ge 2$ and BDF methods of order $p\le 6$.
    We assume quasi-uniform admissible triangulations of the initial surface and initial values chosen by finite element interpolation of the initial data for $X$.
    Suppose that the problem admits an exact solution $(X,v)$ that is sufficiently smooth (say, of class  $C([0,T],H^{k+1})\cap C^{p+1}([0,T],W^{1,\infty})$) on the time interval $0\le t \le T$,  and that the flow map $X(\cdot,t):\Gamma_0\to \Gamma(t)\subset\R^3$ is non-degenerate for $0\le t \le T$, so that $\Gamma(t)$ is a regular surface. Suppose further that the starting values are sufficiently accurate:
  $$
  \| (X_h^i)^L - X(\cdot,i\tau) \|_{H^1(\Ga^0)^3} \le C_0  (h^k+\tau^{p}), \qquad i=0,1,\dots,p-1.
  $$
    Then, there exist $h_0 >0$, $\tau_0>0$ and $c_0>0$ such that for all mesh widths $h \leq h_0$ and step sizes $\tau\le\tau_0$ satisfying the mild stepsize restriction
    $$
    \tau^p \le c_0 h,
    $$
    the following error bounds hold over the exact surface $\Ga(t_n)=\Ga(X(\cdot,t_n))$
uniformly  for $0\le t_n=n\tau \le T$:
%    \begin{equation*}
%   %     \begin{aligned}
%            \bigg(\|u_h^L(\cdot,t) - u(\cdot,t)\|_{L^2(\Ga(t))}^2 + \int_0^t \|u_h^L(\cdot,s) - u(\cdot,s)\|_{H^1(\Ga(s))}^2 \,\d s \\
%            \bigg)^{1/2} \leq C h^k
%  %      \end{aligned}
%    \end{equation*}
%    and
    \begin{equation*}
 	\begin{aligned}
   		\|(x_h^n)^{L} - \mathrm{id}_{\Gamma(t_n)}\|_{H^1(\Ga(t_n))^3} &\leq C(h^k+\tau^p),\\
		   \|(v_h^n)^{L} - v(\cdot,t_n)\|_{H^1(\Ga(t_n))^3}  &\leq C(h^k+\tau^p).
 	\end{aligned}
    \end{equation*}
 The constant $C$ is independent of  $h$ and $\tau$ and $n$ with $n\tau\le T$,  but depends on bounds of higher derivatives of the solution $(X,v)$,  and on the length $T$ of the time interval.
\end{theorem}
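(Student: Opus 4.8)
The plan is to follow the now-standard three-part strategy for error analysis of ESFEM discretizations of geometric flows --- consistency, stability, and their combination --- but carried out entirely in the matrix--vector formulation \eqref{eq: ODE form}--\eqref{BDF}, so that the geometry enters only through the consistency bounds.

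First I would set up the error equation. Let $\xs(t)$ be the nodal vector of the interpolated exact flow, $x_{*,j}(t)=X(q_j,t)$, and let $\xs^n=\xs(t_n)$, $\vs^n=\dotxs(t_n)$. Inserting $\xs^n$ and the associated extrapolated/BDF-differenced vectors into \eqref{BDF} produces defects $\bfd^n$: that is, $\bfK(\widetilde\xs^n)\,\tfrac1\tau\sum_{j=0}^p\delta_j\xs^{n-j} + \beta\bfA(\widetilde\xs^n)\xs^n = \bfg(\widetilde\xs^n,t_n) + \bfK(\widetilde\xs^n)\bfd^n$ (with the defect carried on the $\bfK$-side, the natural place for a velocity-law defect). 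These defects split into a spatial part, controlled by the ESFEM interpolation error of degree $k\ge2$ and the geometric estimates of Section~\ref{section: aux} relating $\Gamma$, $\Gamma_h^*$, $\Gamma_h[\bfx]$, giving $O(h^k)$ in the appropriate discrete $\bfK(\xs)$-norm; and a temporal part, coming from the BDF truncation error and the error of the extrapolation \eqref{eq: extrapolation of u def}, giving $O(\tau^p)$ since $\delta$ has order $p$ and $\gamma$ has order $p$ as well. This is the content of Section~\ref{section: consistency}, and it is essentially a Taylor-expansion exercise once the surface-comparison lemmas are in hand.

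Next, the stability argument of Section~\ref{section: proof of stability}. Writing $\bfe^n=\bfx^n-\xs^n$, subtracting the defect equation from \eqref{BDF}, and linearizing the $\bfx$-dependence of $\bfK$, $\bfA$, $\bfg$ around $\xs^n$ (using the auxiliary bounds on how $\bfM(\bfx)$, $\bfA(\bfx)$ perturb under $\|\bfx-\xs\|$ small in the $W^{1,\infty}$-relevant norm, together with an inductive a priori assumption $\|(x_h^n)^L-\mathrm{id}\|_{W^{1,\infty}}\le h^{(k+1)/2}$ or similar), one reduces to a perturbed linear multistep recursion of the form $\bfK(\xs^n)\,\tfrac1\tau\sum_j\delta_j\bfe^{n-j} + \beta\bfA(\xs^n)\bfe^n = (\text{errors quadratic in }\bfe\text{ and }\widetilde\bfe^n) + \bfK(\xs^n)\bfd^n + (\text{defect-perturbation terms})$. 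For \eqref{v-eq} the key point is that, because the leading operator is the \emph{positive definite} elliptically-regularized $\bfK=\bfM+\alpha\bfA$ rather than a pure stiffness matrix, one does not need the Nevanlinna--Odeh multiplier: plain zero-stability of BDF$p$ for $p\le6$ suffices. Concretely I would test with the BDF velocity $\bfv^n-\bfd^n$ (or apply the new auxiliary lemma on linearly implicit BDF announced at the end of the introduction, presumably a discrete Gronwall-type inequality tailored to the extrapolated coefficients $\gamma_j$), sum over $n$, absorb the extrapolation terms $\widetilde\bfe^n=\sum_{j=0}^{p-1}\gamma_j\bfe^{n-1-j}$ using that $\|\bfe^{n-1-j}\|$ is already controlled, and use the stepsize restriction $\tau^p\le c_0 h$ to dominate the quadratic error terms (an inverse estimate converts $\|\bfe^n\|_{H^1}$-type quantities into $\|\bfe^n\|_{W^{1,\infty}}$ at the cost of $h^{-1}$, and the starting-value hypothesis seeds the induction). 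The outcome is a bound $\|\bfe^n\|_{\bfK(\xs^n)} + \big(\tau\sum_{m\le n}\beta\|\bfe^m\|_{\bfA(\xs^m)}^2\big)^{1/2}\le C\big(\|\text{starting errors}\| + \max_m\|\bfd^m\|_{\bfK(\xs^m)} + \dots\big)$, uniformly in $n\tau\le T$, with $C$ independent of $h,\tau,n$.

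Finally, Section~\ref{section: proof completed} combines the two: feeding the consistency bound $\|\bfd^m\|=O(h^k+\tau^p)$ and the hypothesized starting-value accuracy into the stability estimate gives $\|\bfx^n-\xs^n\|_{\bfK(\xs^n)}\le C(h^k+\tau^p)$; lifting to $\Gamma(t_n)$ via the norm equivalences between the discrete matrix norms and the Sobolev norms on $\Gamma$, $\Gamma_h^*$, $\Gamma_h[\bfx]$ (again from Section~\ref{section: aux}), and adding the interpolation error $\|\mathrm{id}_{\Gamma_h^*}^l-\mathrm{id}_\Gamma\|_{H^1}=O(h^k)$, yields the stated $H^1$-bounds for both $(x_h^n)^L-\mathrm{id}$ and $(v_h^n)^L-v$; one must also verify \emph{a posteriori} that the inductive $W^{1,\infty}$ smallness assumption holds, which is where $\tau^p\le c_0h$ and $k\ge2$ are used. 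I expect the main obstacle to be the stability step: isolating the right test quantity so that the defect sits on the definite $\bfK$-side, keeping the linearization and extrapolation error terms genuinely higher order (they are quadratic in $\bfe$ but multiplied by surface-dependent factors that themselves depend on $\bfx^n$), and threading the induction on $n$ through the a priori $W^{1,\infty}$ bound without circularity --- in particular proving the new BDF auxiliary lemma in a form robust enough to handle the extrapolated matrices $\bfK(\widetilde\bfx^n)$ rather than $\bfK(\bfx^n)$.
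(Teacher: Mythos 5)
Your proposal follows essentially the same route as the paper: defects of the interpolated exact solution measured in a dual ($H_h^{-1}$-type) norm for the velocity law and in the $\bfK$-norm for the position, $O(h^k+\tau^p)$ consistency via the semi-discrete defect plus Peano-kernel/Taylor arguments, stability in the matrix--vector formulation using the ellipticity of $\bfK=\bfM+\alpha\bfA$ to absorb the $\beta\bfA$ term (so only zero-stability of BDF up to $p\le 6$ is needed, no Nevanlinna--Odeh multiplier), an induction giving $W^{1,\infty}$ control of the position error through an inverse inequality under $\tau^p\le c_0h$, and a final combination with interpolation and lift estimates. The only differences are cosmetic repackagings (you eliminate $\bfv^n$ and carry a single defect on the $\bfK$-side, whereas the paper keeps separate velocity-law and ODE error equations and inverts the BDF recursion explicitly; also the new auxiliary lemma is the $O(\tau^p)$ comparison of matrices at extrapolated versus exact nodes, not a Gronwall-type inequality), none of which affect the argument.
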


We note that the first error bound is equivalent to
$$
	\| (X_h^n)^L - X(\cdot,t_n) \|_{H^1(\Ga^0)^3} \le C' (h^k + \tau^{p}) ,
$$
and we mention that the remarks after Theorem 3.1 in \cite{soldriven} (the convergence theorem of the ESFEM semi-discretization) apply also to the fully discretized situation considered here.

The proof of Theorem~\ref{theorem: main} is given in the course of the next four sections.

\section{Preparation: Estimates relating different surfaces}
\label{section: aux}

%\subsection{Stability estimate}
%
%[...]

%
%\subsection{Estimates relating different surfaces}
%\label{section: preliminaries}

In our previous work \cite[Section~4]{soldriven} we have shown some auxiliary results relating different finite element surfaces, which we recapitulate here.

The  finite element matrices of Section~\ref{subsection:DAE} induce discrete versions of Sobolev norms. For any $\bfw=(w_j) \in \R^N$ with corresponding finite element function $w_h= \sum_{j=1}^N w_j \phi_j[\bfx] \in S_h[\bfx]$ we note
\begin{align} \label{M-L2}
   &  \|\bfw\|_{\bfM(\bfx)}^{2} = \bfw^T \bfM(\bfx) \bfw = \|w_h\|_{L^2(\Ga_h[\bfx])}^2, \\
   \label{A-H1}
   &  \|\bfw\|_{\bfA(\bfx)}^{2} = \bfw^T \bfA(\bfx) \bfw = \|\nb_{\Ga_h[\bfx]} w_h\|_{L^2(\Ga_h[\bfx])}^2.
   %\\
 %  &  \|\bfw\|_{\bfK(\bfx)}^{2} = \bfw^T \bfK(\bfx) \bfw = \|w_h\|_{H^1(\Ga_h[\bfx])}^2 .
\end{align}

We use the following setting.
Let $\bfx,\bfy \in \R^{3 N}$ be two nodal vectors defining discrete surfaces $\Gamma_h[\bfx]$ and $\Gamma_h[\bfy]$, respectively. We let $\bfe= (e_j)=\bfx-\bfy \in \R^{  3  N}$. For  $\theta\in[0,1]$, we consider the intermediate surface $\Gamma_h^\theta=\Gamma_h[\bfy+\theta\bfe]$ and the corresponding finite element functions given as
$$
e_h^\theta=\sum_{j=1}^N e_j \phi_j[\bfy+\theta\bfe]
$$
and in the same way, for any vectors $\bfw,\bfz \in \R^N$,
$$
    w_h^\theta=\sum_{j=1}^N w_j \phi_j[\bfy+\theta\bfe] \andquad z_h^\theta=\sum_{j=1}^N z_j \phi_j[\bfy+\theta\bfe] .
$$
The following lemma collects results from  \cite[Section~4]{soldriven}.
\begin{lemma}
\label{lemma: technicals}
    (i) \label{matrix differences}   In the above setting the following identities hold:
    \begin{align*}
        \bfw^T (\bfM(\bfx)-\bfM(\bfy)) \bfz =&\ \int_0^1 \int_{\Ga_h^\theta} w_h^\theta (\nabla_{\Ga_h^\theta} \cdot e_h^\theta) z_h^\theta \; \d\theta, \\
        \bfw^T (\bfA(\bfx)-\bfA(\bfy)) \bfz =&\ \int_0^1 \int_{\Ga_h^\theta} \nb_{\Ga_h^\theta} w_h^\theta \cdot (D_{\Ga_h^\theta} e_h^\theta)\nb_{\Ga_h^\theta}  z_h^\theta \; \d\theta ,
    \end{align*}
    with
    $D_{\Ga_h^\theta} e_h^\theta =  \textnormal{trace}(E) I_3 - (E+E^T)$ for $E=\nabla_{\Ga_h^\theta} e_h^\theta \in \R^{3\times 3}$.

	(ii) \label{lemma:cond-equiv}
	If
%	\begin{equation}
%	\label{e-inf-bound}
		$\| \nabla_{\Gamma_h^\theta} \cdot e_h^\theta \|_{L^\infty(\Gamma_h^\theta)} \leq \mu $
		and %\andquad 
		$\| D_{\Gamma_h^\theta} e_h^\theta \|_{L^\infty(\Gamma_h^\theta)} \leq \rho \ebk$
%	\end{equation} 
for $0 \leq \theta \leq 1$,  then
	$
	 \| \bfw \|_{\bfM(\bfy+\theta\bfe)}  \le e^{\mu/2} \, \| \bfw \|_{\bfM(\bfy)}
	 $
	and %\qquad \textnormal{and} \qquad 
	$
	 \| \bfw \|_{\bfA(\bfy+\theta\bfe)}  \le e^{\rho \ebk/2} \, \| \bfw \|_{\bfA(\bfy)}
	$.	
	
	(iii) \label{lemma:theta-independence} If 
	$%\label{e-inf-bound}
	\| \nabla_{\Gamma_h[\bfy]} e_h^0 \|_{L^\infty(\Gamma_h[\bfy])} \le \frac12,
	$
	then, for $0\le\theta\le 1$, the function $w_h^\theta=\sum_{j=1}^N w_j \phi_j[\bfy+\theta\bfe]$ on $\Gamma_h^\theta=\Gamma_h[\bfy+\theta\bfe]$ is bounded by
	$$
	\| \nabla_{\Gamma_h^\theta} w_h^\theta \|_{L^p(\Gamma_h^\theta)} \le c_p \, \| \nabla_{\Gamma_h^0} w_h^0 \|_{L^p(\Gamma_h^0)}
	\for 1\le p \le \infty,
	$$
	where $c_p$ depends only on $p$ (we have $c_\infty=2$).

	(iv) \label{lemma: normal vector perturbation}
	%Assuming \eqref{e-inf-bound}. 
	Let $y_h^\theta\in\Gamma_h^\theta$ be defined as $y_h^\theta=\sum_{j=1}^N (y_j +\theta e_j)\phi_j[\bfy](q_h)$ for $q_h\in \Gamma_h[\bfy]$. 
	If 
	$%\label{e-inf-bound}
	\| \nabla_{\Gamma_h[\bfy]} e_h^0 \|_{L^\infty(\Gamma_h[\bfy])} \le \frac12,
	$
	then the corresponding unit normal vectors differ by no more than
	 $$
	 |\nu_{\Gamma_h^\theta}(y_h^\theta) - \nu_{\Gamma_h^0}(y_h^0)| \le C\theta  | \nabla_{\Gamma_h^0} e_h^0(y_h^0) | ,
	 %_{L^\infty(\Gamma_h^0)} , 
	 $$
	 where $C$ is independent of $h$ and of $q_h\in \Gamma_h[\bfy]$.
%
%	(v) \label{lemma: mat normal vector} 
%	%We denote by $\mat_\theta f$ the material derivative of a function $f=f(y_h^\theta,\theta)$ depending on $\theta\in[0,1]$.% and $y_h^\theta\in\Gamma_h^\theta$:
%	%$$
%	%\mat_\theta f = \frac\d{\d\theta} f (y_h^\theta,\theta).
%	%%= \frac{\partial f}{\partial\theta}  +\frac{\partial f}{\partial y} e_h^\theta.
%	%$$
%	%
%	The estimate
%	$$
%	\| \mat_\theta  \nu_{\Gamma_h^\theta} \|_{L^p(\Gamma_h^\theta)} \le C
%	\| \nabla_{\Gamma_h^0} e_h^0 \|_{L^p(\Gamma_h^0)} 
%	$$
%	holds, where $C$ is independent of $0\le\theta\le 1$ and $1\le p \le \infty$.

\end{lemma}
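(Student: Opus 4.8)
\noindent\emph{Sketch of a proof.} The plan is to prove all four assertions by one and the same device: transport the surface integrals over the piecewise polynomial surfaces $\Gamma_h^\theta=\Gamma_h[\bfy+\theta\bfe]$ (which interpolate between $\Gamma_h[\bfy]$ at $\theta=0$ and $\Gamma_h[\bfx]$ at $\theta=1$) to a fixed reference triangulation, on which everything becomes an \emph{affine} function of the path parameter $\theta$, and then differentiate in $\theta$. The algebraic heart of (i)--(ii) is a discrete transport property. Setting $Y_h^\theta(q_h)=\sum_{j=1}^N(y_j+\theta e_j)\,\phi_j[\bfy](q_h)$ for $q_h\in\Gamma_h[\bfy]$, one has $\partial_\theta Y_h^\theta=e_h^0$, and since $\phi_j[\bfy+\theta\bfe](Y_h^\theta(q_h))=\phi_j[\bfy](q_h)$ does not depend on $\theta$, the material derivative $\mat$ along the flow $\theta\mapsto Y_h^\theta$ (whose velocity on $\Gamma_h^\theta$ is exactly $e_h^\theta$) annihilates each $\phi_j[\bfy+\theta\bfe]$; hence $\mat w_h^\theta=\mat z_h^\theta=0$ for the functions $w_h^\theta,z_h^\theta$ defined above.

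For (i), I would write $\bfw^T(\bfM(\bfx)-\bfM(\bfy))\bfz=\int_0^1\frac{\d}{\d\theta}\big(\bfw^T\bfM(\bfy+\theta\bfe)\bfz\big)\,\d\theta$, note that $\bfw^T\bfM(\bfy+\theta\bfe)\bfz=\int_{\Gamma_h^\theta}w_h^\theta z_h^\theta$, and invoke the Reynolds transport formula for an evolving surface, $\frac{\d}{\d\theta}\int_{\Gamma_h^\theta}f_h^\theta=\int_{\Gamma_h^\theta}\big(\mat f_h^\theta+f_h^\theta\,\nabla_{\Gamma_h^\theta}\!\cdot e_h^\theta\big)$, applied elementwise and summed; with $f_h^\theta=w_h^\theta z_h^\theta$ and $\mat f_h^\theta=0$ this is the first identity. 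For the stiffness matrix I would use the transport formula for Dirichlet forms, $\frac{\d}{\d\theta}\int_{\Gamma_h^\theta}\nabla_{\Gamma_h^\theta}w_h^\theta\cdot\nabla_{\Gamma_h^\theta}z_h^\theta=\int_{\Gamma_h^\theta}\big(\nabla_{\Gamma_h^\theta}\mat w_h^\theta\cdot\nabla_{\Gamma_h^\theta}z_h^\theta+\nabla_{\Gamma_h^\theta}w_h^\theta\cdot\nabla_{\Gamma_h^\theta}\mat z_h^\theta+\nabla_{\Gamma_h^\theta}w_h^\theta\cdot(D_{\Gamma_h^\theta}e_h^\theta)\nabla_{\Gamma_h^\theta}z_h^\theta\big)$, in which $D_{\Gamma_h^\theta}e_h^\theta=(\nabla_{\Gamma_h^\theta}\!\cdot e_h^\theta)I_3-(\nabla_{\Gamma_h^\theta}e_h^\theta+(\nabla_{\Gamma_h^\theta}e_h^\theta)^T)$, and again the two material-derivative terms vanish. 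Part (ii) then follows by putting $\bfw=\bfz$ in these identities: one gets $\frac{\d}{\d\theta}\|\bfw\|_{\bfM(\bfy+\theta\bfe)}^2=\int_{\Gamma_h^\theta}(w_h^\theta)^2\,\nabla_{\Gamma_h^\theta}\!\cdot e_h^\theta\le\mu\,\|\bfw\|_{\bfM(\bfy+\theta\bfe)}^2$ and likewise $\frac{\d}{\d\theta}\|\bfw\|_{\bfA(\bfy+\theta\bfe)}^2\le\rho\,\|\bfw\|_{\bfA(\bfy+\theta\bfe)}^2$, and Gronwall's inequality over $[0,\theta]\subseteq[0,1]$ produces the factors $e^{\mu/2}$ and $e^{\rho/2}$.

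For (iii) and (iv) I would compare $\Gamma_h^0=\Gamma_h[\bfy]$ and $\Gamma_h^\theta$ via the elementwise map $\Phi_\theta=\id+\theta\,e_h^0\colon\Gamma_h^0\to\Gamma_h^\theta$. Pulling each curved element back to a reference triangle $\widehat T$ through the polynomial parametrizations $F_\theta$, the affine dependence $F_\theta=F_0+\theta\,\widehat e$ (with $\widehat e=e_h^0\circ F_0$) yields the multiplicative relation $DF_\theta=(\id+\theta\,\nabla_{\Gamma_h^0}e_h^0)\,DF_0$ on tangent vectors. Under the hypothesis $\|\nabla_{\Gamma_h^0}e_h^0\|_{L^\infty}\le\tfrac12$ the matrix field $\id+\theta\,\nabla_{\Gamma_h^0}e_h^0$ is invertible with inverse of operator norm at most $(1-\tfrac\theta2)^{-1}\le 2$, \emph{uniformly in $h$ and $\theta$}, and the surface measures of $\Gamma_h^0$ and $\Gamma_h^\theta$ are comparable with constants depending only on that factor. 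Since $w_h^\theta\circ\Phi_\theta=w_h^0$, the chain rule expresses $\nabla_{\Gamma_h^\theta}w_h^\theta\circ\Phi_\theta$ in terms of $\nabla_{\Gamma_h^0}w_h^0$ and $(\id+\theta\,\nabla_{\Gamma_h^0}e_h^0)^{-1}$ (up to tangential projections and a transpose), giving the pointwise bound $|\nabla_{\Gamma_h^\theta}w_h^\theta(y_h^\theta)|\le 2\,|\nabla_{\Gamma_h^0}w_h^0(y_h^0)|$ and hence, after integrating against the comparable measures, the $L^p$ estimate with $c_p$ depending only on $p$ (and $c_\infty=2$, since no change of measure enters the $L^\infty$ bound). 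For (iv), the unit normals $\nu_{\Gamma_h^\theta}(y_h^\theta)$ and $\nu_{\Gamma_h^0}(y_h^0)$ are the normalized cross products of the corresponding tangent frames; writing the frame at $y_h^\theta$ as the image under $\id+\theta\,\nabla_{\Gamma_h^0}e_h^0$ of a frame at $y_h^0$ and expanding to first order in the perturbation $\theta\,\nabla_{\Gamma_h^0}e_h^0$ (which is bounded by $\tfrac\theta2$), one obtains $|\nu_{\Gamma_h^\theta}(y_h^\theta)-\nu_{\Gamma_h^0}(y_h^0)|\le C\theta\,|\nabla_{\Gamma_h^0}e_h^0(y_h^0)|$ with $C$ absolute.

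The part I expect to require the most care is not any single estimate but the bookkeeping of the reference-element reduction, and in particular the verification that every constant ($c_p$, $C$, and the exponential factors) is \emph{independent of the mesh width $h$}. The point that makes this work is the multiplicative structure $DF_\theta=(\id+\theta\,\nabla_{\Gamma_h^0}e_h^0)DF_0$, in which the only $\theta$-dependent -- and the only potentially dangerous -- factor is controlled purely by the smallness hypothesis on $\nabla_{\Gamma_h^0}e_h^0$, with the $h$-dependence confined to the harmless common factor $DF_0$; the admissibility (quasi-uniformity, shape-regularity) of the triangulations is used only to keep the pulled-back geometry of $\Gamma_h^0$ itself under control. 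The complete details are carried out in \cite[Section~4]{soldriven}.
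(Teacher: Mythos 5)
Your sketch is correct and follows essentially the same route as the source: the paper itself gives no proof but collects these results from \cite[Section~4]{soldriven}, where parts (i)--(ii) are obtained exactly as you describe, via the discrete transport property along the interpolating family $\Gamma_h^\theta$ and the Leibniz/transport formulas for mass and Dirichlet forms followed by a Gronwall argument, and parts (iii)--(iv) via the elementwise map $\mathrm{id}+\theta e_h^0$ and the factorization of the tangential Jacobian, with the $\tfrac12$-smallness hypothesis controlling the inverse factor and the measure change uniformly in $h$. No gaps beyond the standard bookkeeping you already flag.
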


The following result is shown in Lemma~4.1 of \cite{DziukLubichMansour_rksurf}.% and its proof. 

\begin{lemma}% and \cite{LubichMansourVenkataraman_bdsurf} Lemma 2.2]
\label{lemma: matrix derivatives}
    Let $\Gamma(t)=\Gamma(X(\cdot,t))$, $t \in [0,T]$, be a smoothly evolving family of smooth closed surfaces, and
    let the vector $\xs(t) \in \R^{3N}$ collect the nodes $x_j^*(t)=X(q_j,t)$.  Then, for $0\le s, t \le T$ and for all $\bfw,\bfz\in \R^N$,
    \begin{align*}
        \bfw^T \bigl(\bfM(\xs(t))  - \bfM(\xs(s))\bigr)\bfz \leq&\ C  (t-s) \|\bfw\|_{\bfM(\xs(t))}\|\bfz\|_{\bfM(\xs(t))} , \\
        \bfw^T \bigl(\bfA(\xs(t))  - \bfA(\xs(s))\bigr)\bfz \leq&\ C  (t-s) \|\bfw\|_{\bfA(\xs(t))}\|\bfz\|_{\bfA(\xs(t))}    
    \end{align*}
    and the norms for different times are uniformly equivalent for $0\le s, t \le T$:
    $$
    \| \bfw \|_{\bfM(\xs(t))} \le C \| \bfw \|_{\bfM(\xs(s))}, \quad\ 
     \| \bfw \|_{\bfA(\xs(t))} \le C \| \bfw \|_{\bfA(\xs(s))}.
     $$
     %and $s,t \in [0,T]$. 
    The constant $C$ depends only on a bound of the $W^{1,\infty}\!$ norm of the surface velocity.
\end{lemma}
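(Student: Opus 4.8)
The plan is to prove the difference estimates and the norm equivalence by differentiating the bilinear forms $t\mapsto\bfw^T\bfM(\xs(t))\bfz$ and $t\mapsto\bfw^T\bfA(\xs(t))\bfz$ in time, exploiting the transport property of the nodal basis functions, and then applying Gronwall's inequality. Abbreviate $\Gamma_h^*=\Gamma_h[\xs(t)]$, and let $v_h^*(\cdot,t)\in S_h[\xs(t)]^3$ be the finite element velocity on $\Gamma_h^*$ whose nodal vector is $\dotxs(t)$; it is the Lagrange interpolant (of degree $k$) of the exact surface velocity $v=\partial_t X$. The surface $\Gamma_h^*$ evolves with velocity $v_h^*$, and by the transport property of the basis functions, $\mat\phi_j[\xs(t)]=0$, so every finite element function $w_h(\cdot,t)=\sum_j w_j\,\phi_j[\xs(t)]$ with time-independent nodal vector $\bfw=(w_j)$ satisfies $\mat w_h=0$ (and likewise for $z_h$ with $\bfz$).

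First I would record the uniform velocity bound: since the triangulations are shape-regular and $v\in W^{1,\infty}$ with a $[0,T]$-uniform bound, $W^{1,\infty}$-stability of interpolation gives, uniformly in $t$ and $h$,
\[
\|\nabla_{\Gamma_h^*}\!\cdot v_h^*\|_{L^\infty(\Gamma_h^*)} + \|\nabla_{\Gamma_h^*} v_h^*\|_{L^\infty(\Gamma_h^*)} \le C ,
\]
with $C$ depending only on a $W^{1,\infty}$ bound of $v$. Next, applying the Leibniz formulae for the time derivative of integrals over evolving surfaces elementwise and summing (the element interfaces have measure zero and $v_h^*$ is globally continuous, so the flow is well defined), and using $\mat w_h=\mat z_h=0$, the material-derivative terms drop and only the velocity-dependent terms remain:
\[
\frac{\d}{\d t}\,\bfw^T\bfM(\xs(t))\bfz = \int_{\Gamma_h^*}(\nabla_{\Gamma_h^*}\!\cdot v_h^*)\,w_h z_h, \qquad \frac{\d}{\d t}\,\bfw^T\bfA(\xs(t))\bfz = \int_{\Gamma_h^*}\nabla_{\Gamma_h^*} w_h\cdot(D_{\Gamma_h^*}v_h^*)\,\nabla_{\Gamma_h^*} z_h,
\]
where $D_{\Gamma_h^*}v_h^* = \mathrm{trace}(E)I_3-(E+E^T)$ with $E=\nabla_{\Gamma_h^*}v_h^*$ — exactly the tensor appearing in Lemma~\ref{lemma: technicals}(i). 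Combining this with the velocity bound and the identities \eqref{M-L2}--\eqref{A-H1} yields
\[
\Bigl|\tfrac{\d}{\d t}\,\bfw^T\bfM(\xs(t))\bfz\Bigr|\le C\,\|\bfw\|_{\bfM(\xs(t))}\|\bfz\|_{\bfM(\xs(t))}, \qquad \Bigl|\tfrac{\d}{\d t}\,\bfw^T\bfA(\xs(t))\bfz\Bigr|\le C\,\|\bfw\|_{\bfA(\xs(t))}\|\bfz\|_{\bfA(\xs(t))}.
\]

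To conclude, I would take $\bfz=\bfw$: then $\bigl|\tfrac{\d}{\d t}\|\bfw\|_{\bfM(\xs(t))}^2\bigr|\le C\|\bfw\|_{\bfM(\xs(t))}^2$, and Gronwall's lemma gives $\|\bfw\|_{\bfM(\xs(t))}\le e^{C|t-s|/2}\|\bfw\|_{\bfM(\xs(s))}\le e^{CT/2}\|\bfw\|_{\bfM(\xs(s))}$, i.e.\ the claimed uniform norm equivalence; the same argument applies verbatim to $\bfA$. Finally, for $s\le t$, integrating the derivative bounds from $s$ to $t$ and using the norm equivalence to replace $\|\cdot\|_{\bfM(\xs(r))}$ by $\|\cdot\|_{\bfM(\xs(t))}$ (resp.\ the analogue for $\bfA$) on the right-hand side produces exactly the two stated difference estimates, with $C$ depending only on the $W^{1,\infty}$ bound of $v$. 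As an alternative route avoiding time differentiation, one can apply Lemma~\ref{lemma: technicals}(i)--(iii) directly with $\bfy=\xs(s)$, $\bfx=\xs(t)$, observing that the nodal values of $\bfe=\xs(t)-\xs(s)$ are $X(q_j,t)-X(q_j,s)=\int_s^t v(X(q_j,r),r)\,\d r$, so that $\|\nabla_{\Gamma_h^\theta}\!\cdot e_h^\theta\|_{L^\infty}\le C(t-s)\|v\|_{W^{1,\infty}}$, and then using Lemma~\ref{lemma: technicals}(ii) to compare the intermediate-surface norms, together with a chaining argument for the global norm equivalence.

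The only genuinely delicate points I anticipate are (a) the uniform-in-$h$ control of the interpolated velocity $v_h^*$ in $W^{1,\infty}(\Gamma_h^*)$, which rests on interpolation stability on shape-regular meshes and on the assumed smoothness of $v$, and (b) the careful justification that the Leibniz (transport) formula may be used on the merely piecewise smooth surface $\Gamma_h^*$ — this is handled by applying the formula on each curved element, where everything is smooth, and summing. Everything else is a routine combination of the transport property, interpolation estimates, and Gronwall's inequality.
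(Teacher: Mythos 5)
Your proof is correct and follows essentially the same route as the source on which the paper relies: the paper does not prove Lemma~\ref{lemma: matrix derivatives} itself but cites Lemma~4.1 of \cite{DziukLubichMansour_rksurf}, whose argument is precisely your main one --- differentiate $t\mapsto\bfw^T\bfM(\xs(t))\bfz$ and $t\mapsto\bfw^T\bfA(\xs(t))\bfz$ using the transport property and the evolving-surface Leibniz formula, bound the resulting velocity-dependent forms through the uniform $W^{1,\infty}$ control of the interpolated velocity, and conclude by Gronwall and integration in time. Your alternative route via Lemma~\ref{lemma: technicals}(i)--(iii) with a chaining argument is also sound, so no gaps remain.
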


%
%
%\begin{lemma}
%\label{lemma: matrix identity}
%
%
%
%
%	Let the surface $\Ga(X)$ be sufficiently smooth, then the following estimates hold for the interpolating surface $\Ga(\xs)$, for $0\leq m \tau \leq n \tau \leq T$,
%	\begin{align*}
%		\bfw^T (\bfM(\xs^n)-\bfM(\xs^m)) \bfz 
%		\leq &\ c (n-m) \tau \|\bfw\|_{\bfM(\xs^m)} \|\bfz\|_{\bfM(\xs^m)} , \\
%		\bfw^T (\bfA(\xs^n)-\bfA(\xs^m)) \bfz 
%		\leq &\ c (n-m) \tau \|\bfw\|_{\bfA(\xs^m)} \|\bfz\|_{\bfA(\xs^m)} .
%	\end{align*}
%\end{lemma}
We also need a result which compares the finite element surfaces with exact and extrapolated nodes. 
%Such a result uses the well-known bound for the $p$-step extrapolation $\widetilde \bfx_*^n = \sum_{j=0}^{p-1} \gamma_j \bfx_*^{n-1-j}$ of the exact solution values $\bfx_*^n =\bfx_*(t_n)$(cf.\ \cite[inequality~(26)]{AkrivisLubich_quasilinBDF}):
%\begin{equation}
%\label{eq: H^1 error of extrapolation}
%	\|\widetilde\bfx_\ast^n - \xs^n\|_{\bfK(\xs^n)} \leq c \tau^p .
%\end{equation}
\begin{lemma}
	\label{lemma: matrix identity for extrapolation}
	Let $\Gamma(t)=\Gamma(X(\cdot,t))$, $t \in [0,T]$, be a smoothly evolving family of smooth closed surfaces. We denote the nodal vectors of exact solution values by $\bfx_*^n =\bfx_*(t_n)$ and of the extrapolated values by $\widetilde \bfx_*^n = \sum_{j=0}^{p-1} \gamma_j \bfx_*^{n-1-j}$.
%	Then there exists $c_0>0$, independent of $\tau$ and $h$, such that under the mild stepsize restriction
%	\begin{equation*}
%		\tau^p \leq c_0 h
%	\end{equation*}
 Then, the following estimates hold for all $\bfw,\bfz\in\R^N$:
	\begin{align*}
		\bfw^T (\bfM(\widetilde\bfx_\ast^n)-\bfM(\xs^n)) \bfz 
		&\leq  C\tau^p \,\|\bfw\|_{\bfM(\xs^n)} \|\bfz\|_{\bfM(\xs^n)} , \\
		\bfw^T (\bfA(\widetilde\bfx_\ast^n)-\bfA(\xs^n)) \bfz 
		&\leq  C\tau^p \, \|\bfw\|_{\bfA(\xs^n)} \|\bfz\|_{\bfA(\xs^n)} ,
	\end{align*}
	where $C$ is independent of $h$, $\tau$ and $n$ with $0\le n\tau\le T$.
\end{lemma}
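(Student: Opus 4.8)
The plan is to reduce this statement to Lemma~\ref{lemma: technicals}(i) applied with $\bfx = \widetilde\bfx_\ast^n$ and $\bfy = \xs^n$, together with the approximation property of the BDF extrapolation. Indeed, by Lemma~\ref{lemma: technicals}(i) we have
\begin{equation*}
	\bfw^T (\bfM(\widetilde\bfx_\ast^n)-\bfM(\xs^n)) \bfz = \int_0^1 \int_{\Ga_h^\theta} w_h^\theta (\nabla_{\Ga_h^\theta} \cdot e_h^\theta) z_h^\theta \; \d\theta,
\end{equation*}
where now $\bfe = \widetilde\bfx_\ast^n - \xs^n$ and $\Ga_h^\theta = \Gamma_h[\xs^n + \theta\bfe]$, and similarly for the stiffness term with $D_{\Ga_h^\theta} e_h^\theta$ in place of $\nabla_{\Ga_h^\theta}\cdot e_h^\theta$.

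First I would estimate the nodal vector $\bfe$. Since the exact flow map $X(\cdot,t)$ is smooth in $t$ (the surface velocity is bounded in $W^{1,\infty}$), each node $x_{*,j}(t)=X(q_j,t)$ is a $C^{p+1}$ function of $t$, and the extrapolation $\gamma(\zeta)=(1-(1-\zeta)^p)/\zeta$ is exact on polynomials of degree $\le p-1$. A standard Taylor/Peano-kernel argument then gives $\abs{\widetilde x_{*,j}^n - x_{*,j}^n} \le C\tau^p$ uniformly in $j$ and in $n$ with $n\tau\le T$, with $C$ depending only on a bound for $p+1$ derivatives of $X$ (equivalently, on the $W^{p+1,\infty}$-in-time bound on the surface). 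Hence $\max_j \abs{e_j}\le C\tau^p$.

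Next I would convert this pointwise nodal bound into an $L^\infty$ bound on the finite element function $\nabla_{\Ga_h^\theta} e_h^\theta$. Working on the reference element and using quasi-uniformity/shape-regularity of the triangulation (via an inverse inequality), one gets $\norm{\nabla_{\Ga_h^0} e_h^0}_{L^\infty(\Ga_h^0)} \le C h^{-1}\max_j \abs{e_j} \le C\tau^p h^{-1}$, and under the mild stepsize restriction $\tau^p\le c_0 h$ this is bounded by $Cc_0$, which we may take $\le \tfrac12$ by choosing $c_0$ small; this activates Lemma~\ref{lemma: technicals}(iii)--(iv) and ensures all the intermediate surfaces $\Ga_h^\theta$ are well-defined and comparable. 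Crucially, though, we do \emph{not} want the final bound to carry an $h^{-1}$ factor. So instead I would pass through the exact-surface interpolation estimate: $e_h^{0}$ is (close to) the finite element interpolant of the smooth $\R^3$-valued function $\widetilde X_{*}^n - X(\cdot,t_n)$, whose $W^{1,\infty}$ norm on $\Gamma^0$ is itself $O(\tau^p)$ by the same Taylor argument applied to $X(\cdot,\cdot)$ rather than to its nodal values; then the standard interpolation estimate on curved elements of degree $k$ (as in \cite{Demlow2009}) gives $\norm{\nabla_{\Ga_h^0} e_h^0}_{L^\infty(\Ga_h^0)} \le C\tau^p$ directly, with no negative power of $h$. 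By Lemma~\ref{lemma: technicals}(iii) the same bound (up to the constant $c_\infty=2$) holds for $\norm{\nabla_{\Ga_h^\theta} e_h^\theta}_{L^\infty}$, hence for $\norm{\nabla_{\Ga_h^\theta}\cdot e_h^\theta}_{L^\infty}$ and for $\norm{D_{\Ga_h^\theta} e_h^\theta}_{L^\infty}$, for all $\theta\in[0,1]$.

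Finally I would bound the integrals. Pull the $L^\infty$ factor out of $\int_{\Ga_h^\theta} w_h^\theta (\nabla_{\Ga_h^\theta}\cdot e_h^\theta) z_h^\theta$ and apply Cauchy--Schwarz to get $\le C\tau^p \norm{w_h^\theta}_{L^2(\Ga_h^\theta)}\norm{z_h^\theta}_{L^2(\Ga_h^\theta)} = C\tau^p \norm{\bfw}_{\bfM(\xs^n+\theta\bfe)}\norm{\bfz}_{\bfM(\xs^n+\theta\bfe)}$ by \eqref{M-L2}; then Lemma~\ref{lemma: technicals}(ii) (whose hypotheses are met since $\mu,\rho\le C\tau^p\le 1$) replaces the $\theta$-dependent norms by $\norm{\bfw}_{\bfM(\xs^n)}\norm{\bfz}_{\bfM(\xs^n)}$ at the cost of a factor $e^{\mu/2}\le e^{1/2}$. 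Integrating over $\theta\in[0,1]$ yields the first inequality; the stiffness-matrix inequality is entirely analogous using the $\bfA$-part of Lemma~\ref{lemma: technicals}(i)--(ii) and \eqref{A-H1}. The constant $C$ depends only on a $W^{p+1,\infty}$-in-time bound on $X$ and on the shape-regularity constants of the triangulation, hence is independent of $h$, $\tau$ and $n$ with $0\le n\tau\le T$.

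The main obstacle is obtaining the $O(\tau^p)$ bound on $\norm{\nabla_{\Ga_h^\theta} e_h^\theta}_{L^\infty}$ \emph{without} a spurious $h^{-1}$; this is why the argument must route the estimate through the smooth exact position function $X$ and a degree-$k$ interpolation estimate on curved surface elements, rather than naively using an inverse inequality on the nodal differences. Everything after that is a routine combination of the cited lemmas.
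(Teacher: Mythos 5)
Your proposal is correct and follows essentially the same route as the paper's proof: a Peano-kernel representation of the BDF extrapolation error, an $h$-independent $O(\tau^p)$ bound on the $L^\infty$ norm of $\nabla_{\Ga_h^\theta} e_h^\theta$ (the paper deduces this directly from the smoothness of $X$, exactly in the spirit of your observation that $e_h$ interpolates the smooth function $\widetilde X(\cdot,t_n)-X(\cdot,t_n)$ whose $W^{1,\infty}$ norm is $O(\tau^p)$), and then Lemma~\ref{lemma: technicals} combined with an $L^2$--$L^\infty$--$L^2$ estimate and the norm equivalences to reach the stated bounds. Your explicit discussion of avoiding the spurious $h^{-1}$ merely spells out a step the paper asserts without detail, so there is no substantive difference in approach.
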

\begin{proof}
For the extrapolated value $\widetilde X(q,t) = \sum_{j=0}^{p-1} \gamma_j X(q,  t-(j+1)\tau)$, we use the error formula with Peano kernel representation, \bbk see e.g.~\cite[Section~3.2.6]{Gautschi}, \ebk
\begin{equation}
\label{peano}
	\widetilde X(q,t)  - X(q,t) = \tau^p \int_0^p \kappa_p(\lambda) \, \partial_t^{p+1} X(q,t-\lambda\tau)\, \d\lambda
\end{equation}
with a bounded Peano kernel $\kappa_p$. We note that we have 
$$
\widetilde x_{*,j}^n - x_{*,j}^n = \widetilde X(q_j,t_n)  - X(q_j,t_n).
$$
Since $X$ is assumed smooth, we obtain from the above error formula that  for $0 \leq \theta \leq 1$,  the finite element function ${\widetilde e}_h^{n,\theta}$ in $S_h(\Ga_h^{\theta})$ with the nodal vector $\widetilde\bfx_\ast^n - \xs^n$, for $\Ga_h^{\theta} = \Ga_h[\xs^n+\theta (\widetilde\bfx_\ast^n - \xs^n)]$, has a gradient bounded in the maximum norm by  $c\tau^p$, where $c$ is independent of $\tau$ and $h$.
%Using the finite element functions corresponding to the vectors $\widetilde\bfx_\ast^n$ and $\xs^n$ we write
%\begin{align*}
%	{\widetilde e}_h^{n,\theta} 
%	=&\ \widetilde{X}_{\ast,h}^{n,\theta} - {X}_{\ast,h}^{n,\theta} = \sum_{j=0}^{p-1} \gamma_j {X}_{\ast,h}^{n-p+j,\theta} - {X}_{\ast,h}^{n,\theta}
%\end{align*}
%
So we have  the bound
$$%\begin{align*}
	\|\nabla_{\Ga_h[\xs^n]} \cdot {\widetilde e}_h^{n,0} \|_{L^\infty(\Ga_h[\xs^n])} \le c\tau^p.
$$%\end{align*}
Together with Lemma~\ref{lemma: technicals} and an $L^2 - L^\infty - L^2$ estimate, we thus obtain
\begin{align*}
	\bfw^T (\bfM(\widetilde\bfx_\ast^n)-\bfM(\xs^n)) \bfz 
	=&\ \int_0^1 \int_{\Gamma_h^{n,\theta}} w_h^\theta (\nabla_{\Gamma_h^{n,\theta}} \cdot {\widetilde e}_h^{n,\theta} ) z_h^\theta \d \theta \\
	\leq &\ \int_0^1 \|w_h^\theta\|_{L^2(\Gamma_h^{n,\theta})}
	 \|\nabla_{\Gamma_h^{n,\theta}} \cdot {\widetilde e}_h^{n,\theta} \|_{L^\infty(\Gamma_h^{n,\theta})} 
	 \|z_h^\theta\|_{L^2(\Gamma_h^{n,\theta})} \d \theta \\
	 \leq &\ c\tau^p
	 \|w_h^0\|_{L^2(\Ga_h^{0,n})}%_{L^2(\Gamma_h^{n,\theta})}\textbf{}
	 \|z_h^0\|_{L^2(\Ga_h^{0,n})} \\ %_{L^2(\Gamma_h^{n,\theta})} \d \theta \\
	 \leq &\ c\tau^p \|\bfw\|_{\bfM(\xs^n)} \|\bfz\|_{\bfM(\xs^n)} .
\end{align*}
The second estimate is proved in the same way. \qed
\end{proof}

The above lemma immediately implies the following norm equivalence, for sufficiently small step size $\tau$,
\begin{equation}
\label{eq: extrap norm equivalence}
	\half \|\bfw\|_{\bfK(\xs^n)}^2 
	\leq \|\bfw\|_{\bfK(\widetilde\bfx_\ast^n)}^2 
	\leq \tfrac{3}{2} \|\bfw\|_{\bfK(\xs^n)}^2 .
\end{equation}
%\begin{equation*}
%	\begin{aligned}
%		\|\ev^n\|_{\bfK(\widetilde\bfx_\ast^n)}^2 
%		=&\  (\ev^n)^T \bfK(\xs^n) \ev^n - (\ev^n)^T \big(\bfK(\xs^n) - \bfK(\widetilde\bfx_\ast^n)\big) \ev^n \big| \\
%		\geq &\ \|\ev^n\|_{\bfK(\xs^n)}^2 - \big|(\ev^n)^T \big(\bfK(\xs^n) - \bfK(\widetilde\bfx_\ast^n)\big) \ev^n \big| \\
%		\geq &\ \|\ev^n\|_{\bfK(\xs^n)}^2 - c \tau^p \|\ev^n\|_{\bfK(\xs^n)}^2 \\
%		\geq &\ \half \|\ev^n\|_{\bfK(\xs^n)}^2 .
%	\end{aligned}
%\end{equation*}

\section{Stability}
\label{section: proof of stability}

%Let us use the notations of Section~5.2 of \cite{soldriven} for the exact positions and their velocity.
We denote by
$$
	%\xs(t)\approx 
	\xs(t)=\bigl( x_{*,j}(t)\bigr)\in \R^{3N}\quad\hbox{with}\quad x_{*,j}(t)=X(q_j,t) , \qquad (j=1,\dots,N)
$$
the nodal vector of the \emph{exact} positions on the surface $\Gamma(X(\cdot,t))$. This defines a discrete surface $\Gamma_h[\xs(t)]$ that interpolates the exact surface $\Gamma(X(\cdot,t))$.

We consider the interpolated  exact velocity
$$
	v_{*,h}(\cdot, t)=\sum_{j=1}^N v_{*,j}(t) \phi_j[\xs(t)] \quad\ \hbox{ with }\quad\ v_{*,j}(t)=\dot x_{*,j}(t),
$$
with the corresponding nodal vector
$$
	\vs(t)=\bigl( v_{*,j}(t)\bigr)= \dotxs(t) \in \R^{3N}.
$$
We write
$$
\bfx_*^n=\bfx_*(t_n), \ \ \bfv_*^n=\bfv_*(t_n).
$$
The errors of the numerical solution values $\bfx^n$ and $\bfv^n$ are marked with their respective subscript, hence are denoted by
\begin{align*}
	\ev^n = \bfv^n - \vs^n, \qquad \ex^n = \bfx^n - \xs^n .
\end{align*}

\subsection{Error equations}
The nodal vectors of the exact solution satisfy the equations of the linearly implicit BDF method only up to defects $\dv^n$ and $\dx^n$ that, for $n \geq p$, are defined by the equations
%%%%%%%%%%%%%%%%%%%%%%%%%%%%%%%%%%%%%%%%%%%%%%%%
\newcommand{\tildexs}{\widetilde{\bfx}_{\ast}}
\newcommand{\tildex}{\widetilde\bfx}
\newcommand{\tildeex}{\widetilde{\bfe}_{\bfx}}
%%%%%%%%%%%%%%%%%%%%%%%%%%%%%%%%%%%%%%%%%%%%%%%%
\begin{equation}
\label{eq: BDF for exact sol}
	\begin{aligned}
		\bfK(\tildexs^n) \vs^n + \MCF(\tildexs^n) \xs^n =&\ \bfg(\tildexs^n,t_n) + \bfM(\xs^n)\dv^n , \\
		\frac{1}{\tau} \sum_{j=0}^p \delta_j \xs^{n-j} =&\ \vs^n + \dx^n .
	\end{aligned} 
\end{equation}

We subtract \eqref{eq: BDF for exact sol} from \eqref{BDF} to obtain the error equations
%\begin{align*}
%	\bfK(\tildexs^n) \ev^n+ \MCF(\tildexs^n) \ex^n 
%	=&\ - \big(\bfK(\widetilde \bfx^n) - \bfK(\tildexs^n) \big) \bfv^n \\
%	&\ - \big(\MCF(\widetilde \bfx^n) - \MCF(\tildexs^n) \big) \bfx^n \\
%	&\ + \bfg(\widetilde \bfx^n,t_n) - \bfg(\tildexs^n,t_n) - \bfM(\xs)\dv^n,\\
%	\frac{1}{\tau} \sum_{j=0}^p \delta_j \ex^{n-j}  =&\ \ev^n - \dx^n ,
%\end{align*}
%and then
\begin{equation}
\label{eq: error equations for BDF}
	\begin{aligned}
		\bfK(\tildexs^n) \ev^n + &\ \MCF(\tildexs^n) \ex^n \\
		=&\ - \big(\bfK(\widetilde \bfx^n) - \bfK(\tildexs^n) \big) \ev^n 
		- \big(\bfK(\widetilde \bfx^n) - \bfK(\tildexs^n) \big) \vs^n \\
		&\ - \beta \big(\bfA(\widetilde \bfx^n) - \bfA(\tildexs^n) \big) \ex^n 
		- \beta \big(\bfA(\widetilde \bfx^n) - \bfA(\tildexs^n) \big) \xs^n \\
		&\ + \bfg(\widetilde \bfx^n,t_n) - \bfg(\tildexs^n,t_n) - \bfM(\tildexs^n)\dv^n , \\
		\frac{1}{\tau} \sum_{j=0}^p \delta_j \ex^{n-j}  &=\ \ev^n - \dx^n  .
	\end{aligned}
\end{equation}
%Similarly as in Section~10.1 of \cite{soldriven} in the time-continuous case.

\subsection{Stability bound}

We recall that the matrix $\bfK(\xs)$ defines a norm which is equivalent to the $H^1$ norm on $\Ga_h[\xs]$. The defect  $\bfd_\bfv\in \R^{3N}$ will be measured in the dual norm defined by 
$$
 \|\bfd\|_{\star,\xs}^2 := \bfd^T \bfM(\xs)\bfK(\xs)\inv \bfM(\xs)\bfd ,
$$
which is such that for the finite element function $d_h\in S_h[\bfx^*]^3$ with nodal vector $\bfd$ we have, from \cite[Proof of Theorem 5.1]{LubichMansourVenkataraman_bdsurf} or \cite[Formula (5.5)]{soldriven},
\begin{equation}
\label{dual-norm-h}
  \|\bfd\|_{\star,\xs} = \|d_h\|_{H_h\inv(\Gamma_h[\bfx^*])} := \sup_{0\neq \psi_h \in S_h[\bfx^*]^3} \frac{\int_{\Gamma_h[\bfx^*]} d_h \cdot  \psi_h}{\|\psi_h\|_{H^1(\Gamma_h[\bfx^*])^3}}.
\end{equation}
In these norms we have the following stability result.

%%%We shortly recap Proposition~\ref{propostion: stability - regularised velocity law}, and then prove it using energy estimates.
\begin{proposition}
\label{propostion: stability - regularised velocity law}
	Suppose that the defects of the $p$-step linearly implicit BDF method are bounded as follows, with a sufficiently small $\vartheta > 0$ (that is independent of $h$ and $\tau$ and $n$): for $n\ge p$ with $n\tau \le T$,
	\begin{equation}
	\label{eq: assume small defects}
		\|\dx^n\|_{\bfK(\xs^k)} \leq \vartheta h
		\andquad
		\|\dv^n\|_{\star,\xs^k} \leq \vartheta h \quad \textnormal{ for } \, k \tau \leq T .
	\end{equation} 
	Further, assume that the initial values are chosen such that	
	\begin{equation}
	\label{eq: small initial errors}
		\|\ex^k\|_{\bfK(\xs^k)} \leq \vartheta h
		\andquad
		\|\ev^k\|_{\bfK(\xs^k)} \leq \vartheta h \quad \textnormal{for } \, k=0,\dotsc,p-1.
	\end{equation}
	Then,
%	 for any stepsizes $\tau > 0$ satisfying the mild stepsize restriction, with a constant $c_0$ only depending on the problem and the constant of the inverse estimate,
%	\begin{equation*}
%		c_0 \tau^\kappa \leq \tfrac{1}{24} h ,
%	\end{equation*}
	the following error bounds hold, for $n\ge p$ such that $  n \tau \leq T$, 
	\begin{equation}
	\label{eq: error estimate}
		\begin{aligned}
			\|\ex^{n}\|_{\bfK(\xs^{n})}^2 
			\leq &\ C \tau \sum_{j=p}^{n} \Big( \|\dx^j\|_{\bfK(\xs^j)}^2 + \|\dv^j\|_{\star,\xs^j}^2 \Big) + C\sum_{i=0}^{p-1} \|\ex^i\|_{\bfK(\xs^i)}^2 , \\
			\|\ev^n\|_{\bfK(\xs^n)}^2 
			\leq &\ C \tau \sum_{j=p}^n \Big( \|\dx^j\|_{\bfK(\xs^j)}^2 + \|\dv^j\|_{\star,\xs^j}^2 \Big) + C \|\dv^n\|_{\star,\xs^n}^2 + C\sum_{i=0}^{p-1} \|\ex^i\|_{\bfK(\xs^i)}^2,
		\end{aligned}
	\end{equation}
	where  $C$ is independent of $h$, $\tau$ and $n$ with $n\tau\le T$, but depends on~$T$.
\end{proposition}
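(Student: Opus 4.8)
The plan is to combine two estimates and to run the resulting inequality as an induction on $n$. The first estimate is an \emph{elliptic (one-step) bound}, obtained by testing the velocity error equation in \eqref{eq: error equations for BDF} with $\ev^n$; it controls $\norm{\ev^n}_{\bfK(\xs^n)}$ in terms of $\norm{\ex^n}_{\bfK(\xs^n)}$, the extrapolated position error $\widetilde{\bfe}_{\bfx}^{\,n}=\sum_{i=0}^{p-1}\gamma_i\ex^{n-1-i}$ and the defect $\dv^n$. Throughout I would work with the norms $\norm{\cdot}_{\bfK(\xs^n)}$, passing between these and $\norm{\cdot}_{\bfK(\tildexs^n)}$ by \eqref{eq: extrap norm equivalence} and between different time levels by Lemma~\ref{lemma: matrix derivatives}, at the cost of uniformly bounded constants only. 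The second estimate is a \emph{zero-stability bound}: the discrete variation-of-constants formula for the recursion $\tfrac1\tau\sum_{j=0}^p\delta_j\ex^{n-j}=\ev^n-\dx^n$ expresses $\ex^n$ as a linear combination, with scalar coefficients bounded uniformly for $n\tau\le T$ (this is exactly zero-stability of the BDF method for $p\le6$: the power-series coefficients of $1/\delta(\zeta)$ are bounded), of $\ex^0,\dots,\ex^{p-1}$ and of $\tau(\ev^l-\dx^l)$, $p\le l\le n$; together with a Cauchy--Schwarz step (which is where the dependence on $T$ enters) this gives
\[
\norm{\ex^n}_{\bfK(\xs^n)}^2\ \le\ C\sum_{i=0}^{p-1}\norm{\ex^i}_{\bfK(\xs^i)}^2\ +\ CT\,\tau\!\sum_{j=p}^{n}\Big(\norm{\ev^j}_{\bfK(\xs^j)}^2+\norm{\dx^j}_{\bfK(\xs^j)}^2\Big).
\]
The induction hypothesis is $\norm{\ex^j}_{\bfK(\xs^j)}\le C'\vartheta h$ for all $j<n$ (valid for $j<p$ by \eqref{eq: small initial errors}); by an inverse inequality $\norm{\nabla_{\Gamma_h}w_h}_{L^\infty}\le c\,h^{-1}\norm{w_h}_{H^1}$ this makes $\norm{\nabla_{\Gamma_h}\widetilde e_{\bfx,h}^{\,n}}_{L^\infty}$ as small as we like once $\vartheta$ is chosen small enough, so that Lemma~\ref{lemma: technicals}(ii)--(iv) applies with small constants on the intermediate surfaces between $\Gamma_h[\widetilde\bfx^n]$ and $\Gamma_h[\tildexs^n]$.

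For the elliptic bound, testing the first line of \eqref{eq: error equations for BDF} with $\ev^n$ produces $\norm{\ev^n}_{\bfK(\tildexs^n)}^2+\beta\,(\ev^n)^{T}\bfA(\tildexs^n)\ex^n$ on the left; since $\bfM\ge0$ yields $\bfA\le\alpha^{-1}\bfK$, the coupling term goes to the right and is bounded by $\tfrac14\norm{\ev^n}_{\bfK(\tildexs^n)}^2+C\norm{\ex^n}_{\bfK(\xs^n)}^2$. The right-hand side of \eqref{eq: error equations for BDF} is estimated term by term with the integral representations of Lemma~\ref{lemma: technicals}(i), the $\theta$-independence of $L^p$ norms of Lemma~\ref{lemma: technicals}(iii), the normal-vector estimate of Lemma~\ref{lemma: technicals}(iv), the inverse inequality, and the dual-norm identity \eqref{dual-norm-h} with \eqref{eq: extrap norm equivalence}: the quadratic geometric term $\big(\bfK(\widetilde\bfx^n)-\bfK(\tildexs^n)\big)\ev^n$ is bounded by $C\norm{\nabla_{\Gamma_h}\widetilde e_{\bfx,h}^{\,n}}_{L^\infty}\norm{\ev^n}_{\bfK(\tildexs^n)}^2$ and absorbed (using the smallness just described); the mixed terms $\big(\bfK(\widetilde\bfx^n)-\bfK(\tildexs^n)\big)\vs^n$, $\beta\big(\bfA(\widetilde\bfx^n)-\bfA(\tildexs^n)\big)\xs^n$ and $\bfg(\widetilde\bfx^n,t_n)-\bfg(\tildexs^n,t_n)$, in which the factor other than $\widetilde e_{\bfx,h}^{\,n}$ is smooth (the exact velocity, the discrete identity map, or $g\nu$; all $\bigo(1)$ in $W^{1,\infty}$), are bounded by $C\norm{\widetilde{\bfe}_{\bfx}^{\,n}}_{\bfK(\xs^n)}\norm{\ev^n}_{\bfK(\xs^n)}$ \emph{without} any $L^\infty$-norm of $\widetilde e_{\bfx,h}^{\,n}$; the term $\beta\big(\bfA(\widetilde\bfx^n)-\bfA(\tildexs^n)\big)\ex^n$ is bounded by $C\norm{\nabla_{\Gamma_h}\widetilde e_{\bfx,h}^{\,n}}_{L^\infty}\norm{\ev^n}_{\bfK(\tildexs^n)}\norm{\ex^n}_{\bfK(\xs^n)}$; and the defect term by $C\norm{\ev^n}_{\bfK(\tildexs^n)}\norm{\dv^n}_{\star,\xs^n}$. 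Applying Young's inequality with a small weight, absorbing all $\norm{\ev^n}_{\bfK(\tildexs^n)}^2$ contributions, and using $\norm{\ev^n}_{\bfK(\tildexs^n)}^2\ge\tfrac12\norm{\ev^n}_{\bfK(\xs^n)}^2$ together with $\norm{\widetilde{\bfe}_{\bfx}^{\,n}}_{\bfK(\xs^n)}\le C\max_{0\le i\le p-1}\norm{\ex^{n-1-i}}_{\bfK(\xs^{n-1-i})}$, one obtains
\[
\norm{\ev^n}_{\bfK(\xs^n)}^2\ \le\ C\sum_{i=0}^{p}\norm{\ex^{n-i}}_{\bfK(\xs^{n-i})}^2\ +\ C\norm{\dv^n}_{\star,\xs^n}^2 .
\]

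Inserting this into the zero-stability bound, the sum $\tau\sum_{j=p}^n\norm{\ev^j}_{\bfK(\xs^j)}^2$ turns into $C\tau\sum_{m=0}^n\norm{\ex^m}_{\bfK(\xs^m)}^2$ (each term is counted at most $p+1$ times) plus defect contributions; absorbing the $m=n$ summand for $\tau$ small, this is a discrete Gronwall inequality for $\norm{\ex^n}_{\bfK(\xs^n)}^2$ that yields the first estimate in \eqref{eq: error estimate} (with $C$ growing like $e^{CT}$), and re-inserting it into the bound for $\norm{\ev^n}_{\bfK(\xs^n)}^2$ yields the second. Finally, plugging the assumed bounds \eqref{eq: assume small defects}--\eqref{eq: small initial errors} into \eqref{eq: error estimate} gives $\norm{\ex^n}_{\bfK(\xs^n)}\le C'\vartheta h$ with $C'$ depending only on $T$, $\alpha$, $\beta$ and the triangulation constants but not on $\vartheta$; hence $\vartheta$ small enough (relative to $C'$ and the inverse-inequality constant) closes the induction. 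The main obstacle — and the point where the elliptic regularization in \eqref{v-eq} is essential — is to keep the geometric perturbation terms under control \emph{without} any $G$-stability or Nevanlinna--Odeh multiplier argument: because $\bfK$ (not $\bfM$) is the operator acting on the velocity, the coupling $\beta\,\bfA(\tildexs^n)\ex^n$ is simply absorbed after a Young inequality, so that mere zero-stability of BDF for $p\le6$ suffices; the care needed is to bound \emph{every} perturbation term either by an arbitrarily small multiple of $\norm{\ev^n}_{\bfK}^2$ or by past position errors feeding the Gronwall argument, and to keep the bookkeeping of the three uniformly equivalent norms $\norm{\cdot}_{\bfK(\xs^n)}$, $\norm{\cdot}_{\bfK(\tildexs^n)}$, $\norm{\cdot}_{\bfK(\xs^j)}$ consistent.
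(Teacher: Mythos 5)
Your proposal is correct and follows essentially the same route as the paper's proof: an energy/elliptic estimate obtained by testing the velocity error equation with $\ev^n$ (with the geometric perturbation terms handled via Lemma~\ref{lemma: technicals}, the $L^\infty$ smallness coming from an induction hypothesis plus an inverse inequality, and absorption by Young's inequality), combined with the zero-stability representation $\ex^n=\tau\sum_{j}\mu_{n-j}(\ev^j-\dx^j)+\text{(initial-error terms)}$ using the bounded coefficients of $1/\delta(\zeta)$, norm equivalences from Lemmas~\ref{lemma: matrix derivatives} and~\ref{lemma: matrix identity for extrapolation}, and a discrete Gronwall argument. The intermediate bound $\|\ev^n\|_{\bfK(\xs^n)}^2\le C\sum_{i=0}^{p}\|\ex^{n-i}\|^2_{\bfK}+C\|\dv^n\|_{\star,\xs^n}^2$ is exactly the paper's key estimate, so no further comparison is needed.
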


In Section~\ref{section: consistency} we will show that the defects obtained on inserting the exact solution values into the BDF scheme satisfy the bounds 
$$
\|\dx^n\|_{\bfK(\xs^n)} \le C(h^k+\tau^p),\quad\ 
\|\dv^n\|_{\star,\xs^n}  \le C(h^k+\tau^p).
$$
Hence, condition \eqref{eq: assume small defects} is satisfied under the mild stepsize restriction
\begin{equation}\label{stepsize-restriction}
\tau^p \le c_0 h
\end{equation}
for a sufficiently small $c_0$ that is independent of $h$ and $\tau$.
We note that the error functions $e_x^n,e_v^n \in S_h[\xs^n]^3$ with nodal vectors $\ex^{n}$ and $\ev^{n}$, respectively, are then bounded by
\begin{equation*}
  \begin{aligned}
	&\| e_x^n \|_{H^1(\Ga_h[\xs^n])} \leq C(h^k+\tau^p), \\ 
	&\| e_v^n \|_{H^1(\Ga_h[\xs^n])} \leq C(h^k+\tau^p) , 
  \end{aligned}
	\qquad \textnormal{ for }  n \tau \leq T.
\end{equation*}

\begin{proof}
The proof is based on energy estimates for the matrix--vector formulation of the error equations \eqref{eq: error equations for BDF} and relies on the results of Section~\ref{section: aux}. In the proof, $c$ will be a generic constant independent of $h$ and $\tau$ and $n$ with $n\tau\le T$, which assumes different values on different occurrences.
For many estimates we use similar techniques of proof as for the corresponding time-continuous results in \cite{soldriven}. However, to keep the paper fairly self-contained we include some detailed arguments.

\smallskip
In view of the condition in (iii) of Lemma~\ref{lemma: technicals}
 for $\bfy=\tildexs^n$ and $\bfx=\tildex^n$, we need to control the $W^{1,\infty}$ norm of the position error $\widetilde{e}_x^n$. 
Let us assume that the error estimate \eqref{eq: error estimate} holds for $ p, \dotsc, n-1$.
Then, using an inverse inequality and the norm equivalence \eqref{eq: extrap norm equivalence} and the definition of $\tildeex^n$ (cf.~\eqref{eq: extrapolation of u def}), we obtain
\begin{equation}
\label{eq: assumed bounds}
	\begin{aligned}
		\|\nabla_{\Ga_h[\tildexs^{n}]} \widetilde{e}_x^n\|_{L^\infty(\Ga_h[\tildexs^{n}])}
		\leq &\ c h^{-1} \|\nabla_{\Ga_h[\tildexs^{n}]} \widetilde{e}_x^n\|_{L^2(\Ga_h[\tildexs^{n}])} \\
		\leq &\ c h^{-1} \| \tildeex^n \|_{\bfK(\tildexs^{n})} \leq c h^{-1} \| \tildeex^n \|_{\bfK(\xs^{n})} \\
		\leq &\ c h^{-1} \sum_{j=1}^p \| \ex^{n-j} \|_{\bfK(\xs^{n})} \\
		\leq &\ c h^{-1}  \cdot c\vartheta h \le c\vartheta,
	\end{aligned}
\end{equation}
where the last but one estimate follows from \eqref{eq: error estimate} for the past, and the assumption on small defects \eqref{eq: assume small defects}. For sufficiently small $\vartheta$, we are thus in the position to use the bounds given in Lemma~\ref{lemma: technicals}.
%As $\vartheta$ is assumed sufficiently small, we thus obtain  \eqref{e-inf-bound} 
%and the norm equivalence~\eqref{eq: extrap norm equivalence}.
%Note that, for $n=p$, the first such estimate \eqref{eq: assumed bounds} clearly holds, since it only depends on the initial data.
	
\smallskip
We estimate the two error equations \eqref{eq: error equations for BDF} separately, and then combine them to yield the final estimate. 

(a) \textit{Estimates for the velocity law.} 
By testing the first line of the error equations \eqref{eq: error equations for BDF} with $\ev^n$ we obtain
\begin{align*}
	&\ \half \|\ev^n\|_{\bfK(\xs^n)}^2 \leq \|\ev^n\|_{\bfK(\tildexs^n)}^2 \\
	=&\ - (\ev^n)^T \big(\bfK(\widetilde \bfx^n) - \bfK(\tildexs^n) \big) \vs^n 
	- (\ev^n)^T \big(\bfK(\widetilde \bfx^n) - \bfK(\tildexs^n) \big) \ev^n \\
	&\ - \beta (\ev^n)^T \big(\bfA(\widetilde \bfx^n) - \bfA(\tildexs^n) \big) \xs^n 
	- \beta (\ev^n)^T \big(\bfA(\widetilde \bfx^n) - \bfA(\tildexs^n) \big) \ex^n \\
	&\ + (\ev^n)^T \big(\bfg(\widetilde \bfx^n,t_n) - \bfg(\tildexs^n,t_n)\big) - \beta (\ev^n)^T \bfA(\tildexs^n) \ex^n - (\ev^n)^T \bfM(\tildexs^n)\dv^n ,
\end{align*}
where the inequality follows from \eqref{eq: extrap norm equivalence}.
To bound the right-hand side, we use arguments of the proof of Proposition~10.1 (and that of Proposition~5.1) of \cite{soldriven}, using the results of Lemma~\ref{lemma: technicals}. 

%%%%%%%%%%%%%%%%%%%%%%%%%%%%%%%%%%%
\newcommand{\tildeexth}{\widetilde{e}_{x}}
%%%%%%%%%%%%%%%%%%%%%%%%%%%%%%%%%%%

\noindent (i) For $0 \leq \theta \leq 1$, we denote $\Gamma_h^{n,\theta} = \Gamma_h[\tildexs^n + \theta\tildeex^n]$, where $\tildeex^n = \tildex^n - \tildexs^n = \sum_{j=0}^{p-1} \gamma_j \ex^{n-p+j}$. We denote the finite element functions in $S_h(\Gamma_h^{n,\theta})^3$
with nodal vectors $\tildeex^n$, $\ev^n$ and $\vs^n$ by $\tildeexth^{n,\theta}$,$e_v^{n,\theta}$ and $v_{*}^{n,\theta}$, respectively.
The definition \eqref{eq: extrapolation of u def} and Lemma~\ref{lemma: technicals} then give us
\begin{align*}
	&\ (\ev^n)^T \big(\bfK(\tildex^n) - \bfK(\tildexs^n) \big) \vs^n 
	= \ \int_0^1 \!\! \int_{\Gamma_h^{n,\theta}} \!\!\! e_v^{n,\theta} \cdot \bigl( \nabla_{\Gamma_h^{n,\theta}}\cdot  \tildeexth^{n,\theta} \bigr) v_{*}^{n,\theta} \, \d\theta 
	\\ &\qquad\qquad \qquad
	+ \alpha \int_0^1 \!\! \int_{\Gamma_h^{n,\theta}} \!\!\! \nabla_{\Gamma_h^{n,\theta}} e_v^{n,\theta} \cdot \bigl( D_{\Gamma_h^{n,\theta}} \tildeexth^{n,\theta} \bigr) \nabla_{\Gamma_h^{n,\theta}} v_{*}^{n,\theta} \, \d\theta .
%	= &\ \sum_{j=0}^{p-1} \gamma_j \int_0^1 \!\! \int_{\Gamma_h^{n,\theta}} \!\!\! e_v^{n,\theta} \cdot \bigl( \nabla_{\Gamma_h^{n,\theta}}\cdot  \tildeexth^{\theta,n-p+j} \bigr) v_{*}^{n,\theta} \, \d\theta \\
%	&\ + \alpha \sum_{j=0}^{p-1} \gamma_j \int_0^1 \!\! \int_{\Gamma_h^{n,\theta}} \!\!\! \nabla_{\Gamma_h^{n,\theta}} e_v^{n,\theta} \cdot \bigl( D_{\Gamma_h^{n,\theta}} \tildeexth^{\theta,n-p+j} \bigr) \nabla_{\Gamma_h^{n,\theta}} v_{*}^{n,\theta} \, \d\theta  .
\end{align*}
Using the Cauchy--Schwarz inequality, we estimate the integral with the product of the $L^2-L^2-L^\infty$ norms of the three factors. We thus have
\begin{align*}
	&(\ev^n)^T \big(\bfK(\tildex^n) - \bfK(\tildexs^n) \big) \vs^n \\
	&\leq \ \int_0^1 \| e_v^{n,\theta} \|_{L^2(\Gamma_h^{n,\theta})} \,
	\| \nabla_{\Gamma_h^{n,\theta}}\cdot \tildeexth^{n,\theta} \|_{L^2(\Gamma_h^{n,\theta})} \,
	\| v_{*}^{n,\theta} \|_{L^{\infty}(\Gamma_h^{n,\theta})}\, \d\theta \\ 
	&\ \ \ + \alpha \int_0^1 \| \nabla_{\Gamma_h^{n,\theta}} e_v^{n,\theta} \|_{L^2(\Gamma_h^{n,\theta}) }\, \| D_{\Gamma_h^{n,\theta}} \tildeexth^{n,\theta} \|_{L^2(\Gamma_h^{n,\theta})} \,
	\| \nabla_{\Gamma_h^{n,\theta}} v_{*}^{n,\theta} \|_{L^{\infty}(\Gamma_h^{n,\theta})}\, \d\theta
	\\
	&\leq \ c \int_0^1 \| e_v^{n,\theta} \|_{H^1(\Gamma_h^{n,\theta})} \, 
	\| \tildeexth^{n,\theta} \|_{H^1(\Gamma_h^{n,\theta})} \,
	\| v_{*}^{n,\theta} \|_{W^{1,\infty}(\Gamma_h^{n,\theta})}\, \d\theta.
\end{align*}
By \eqref{eq: assumed bounds} and Lemma~\ref{lemma:theta-independence}, this is bounded by
\begin{align*}
	&(\ev^n)^T \big(\bfK(\tildex^n) - \bfK(\tildexs^n) \big) \vs^n \\ 
	&\leq \ c \| e_v^n \|_{H^1(\Gamma_h[\tildexs^n])} \, 
	\| \tildeexth^{n} \|_{H^1(\Gamma_h[\tildexs^n])} \,
	\| v_*^n \|_{W^{1,\infty}(\Gamma_h[\tildexs^n])} ,
\end{align*}
where the last factor is bounded independently of $h$ and $\tau$.
By Young's inequality, we thus obtain
\begin{align*}
	(\ev^n)^T \big(\bfK(\tildex^n) - \bfK(\tildexs^n) \big) \vs^n 
	\leq & \ \tfrac{1}{48}\| e_v^n \|_{H^1(\Gamma_h[\tildexs^n])}^2 
	+ c \sum_{j=1}^{p} \| e_x^{n-j} \|_{H^1(\Gamma_h[\tildexs^n])}^2 \\
	= &\  \tfrac{1}{48} \|\ev^n\|_{\bfK(\tildexs^n)}^2 
	+ c \sum_{j=1}^{p}  \|\ex^{n-j}\|_{\bfK(\tildexs^n)}^2 \\
	\leq &\  \tfrac{1}{24} \|\ev^n\|_{\bfK(\xs^n)}^2 
	+ c \sum_{j=1}^{p}  \|\ex^{n-j}\|_{\bfK(\xs^n)}^2 ,
\end{align*}
where the last inequality follows from the norm equivalence \eqref{eq: extrap norm equivalence}.

\noindent (ii) Similarly, estimating the three factors in the integrals by $L^2-L^\infty-L^2$, we obtain
\begin{align*}
	(\ev^n)^T \big(\bfK(\widetilde \bfx^n) - \bfK(\tildexs^n) \big) \ev^n 
	\leq &\ c \| e_v^n \|_{L^2(\Gamma_h[\tildexs^n])}^2 \, 
	\| \nbgh \cdot \tildeexth^{n} \|_{L^{\infty}(\Gamma_h[\tildexs^n])} \\
	&\ + c \| \nbgh e_v^n \|_{L^2(\Gamma_h[\tildexs^n])}^2 \, 
	\| D_{\Ga_h}\tildeexth^{n}  \|_{L^{\infty}(\Gamma_h[\tildexs^n])} \\
	\leq &\ c\vartheta \|\ev^n\|_{\bfK(\xs^n)}^2 \leq \tfrac{1}{24} \|\ev^n\|_{\bfK(\xs^n)}^2,
\end{align*}
where  we used the estimate \eqref{eq: assumed bounds} in the last but one  inequality.

\noindent (iii)--(iv) The estimates involving the mean curvature term $\beta \bfA$ (in view of \eqref{eq: K matrix def}) can be shown analogously as (i) and (ii):
\begin{align*}
	& (\ev^n)^T \big(\bfA(\widetilde \bfx^n) - \bfA(\tildexs^n) \big) \xs^n 
	+ (\ev^n)^T \big(\bfA(\widetilde \bfx^n) - \bfA(\tildexs^n) \big) \ex^n 
        \\
	&\leq \tfrac{1}{24} \|\ev^n\|_{\bfK(\xs^n)}^2 
	+ c \|\ex^n\|_{\bfK(\xs^n)}^2
	+ c \sum_{j=1}^{p}  \|\ex^{n-j}\|_{\bfK(\xs^n)}^2 , \\
	& (\ev^n)^T \bfA(\tildexs^n) \ex^n \leq \tfrac{1}{24} \|\ev^n\|_{\bfK(\xs^n)}^2 + c \|\ex^n\|_{\bfK(\xs^n)}^2 .
\end{align*}

\noindent (v) Similarly as  in (i) we rewrite 
\begin{align*}
	 (\ev^n)^T \big(\bfg(\widetilde \bfx^n,t_n) - \bfg(\tildexs^n,t_n)\big)
	 =&\ \int_{\Ga_h^{1,n}} g^n \nu_{\Gamma_h^{1,n}}\cdot e_v^{1,n} 
	 - \int_{\Ga_h^{0,n}} g^n \nu_{\Gamma_h^{0,n}}\cdot e_v^{0,n} \\
	 =&\ \int_0^1 \frac{\d}{\d \theta} \int_{\Gamma_h^{n,\theta}} g^n \nu_{\Gamma_h^{n,\theta}}\cdot e_v^{n,\theta} \d \theta .
\end{align*}
We use the Leibniz formula and $\mat_\theta e_v^{0,n} = 0$ just as in (iii) of the proof of \cite[Proposition~5.1]{soldriven}, to finally obtain
\begin{align*}
	(\ev^n)^T \big(\bfg(\widetilde \bfx^n,t_n) - \bfg(\tildexs^n,t_n)\big) 
	\leq &\  c \| e_v^n \|_{L^2(\Gamma_h[\tildexs^n])} \, 
	\| \tildeexth^{n} \|_{H^1(\Gamma_h[\tildexs^n])} \\
	\leq &\ c \| \ev^n \|_{\bfK(\tildexs^n)} \|\tildeex^n\|_{\bfK(\tildexs^n)}^2 \\
	\leq &\  \tfrac{1}{24} \|\ev^n\|_{\bfK(\xs^n)}^2 
	+ c \sum_{j=1}^{p}  \|\ex^{n-j}\|_{\bfK(\xs^n)}^2 .
\end{align*}

\noindent (vi) The term with the defect is estimated as
\begin{align*}
	(\ev^n)^T \bfM(\xs^n) \dv^n 
	= &\ (\ev^n)^T \bfK(\xs^n)^{1/2} \bfK(\xs^n)^{-1/2} \bfM(\xs^n) \dv^n \\
	\leq &\ \|\ev^n\|_{\bfK(\xs^n)} \|\dv^n\|_{\star,\xs^n} 
	\leq \tfrac{1}{24} \|\ev^n\|_{\bfK(\xs^n)}^2 + c \|\dv^n\|_{\star,\xs^n}^2 .
\end{align*}

\noindent Finally, by combining all these estimates, using multiple absorptions, with sufficiently small $\vartheta$  we finally obtain
\begin{equation}
\label{eq: final estimate for error in v}
	\|\ev^n\|_{\bfK(\xs^n)}^2 \leq c \|\ex^n\|_{\bfK(\xs^n)}^2 + c \sum_{j=1}^{p}  \|\ex^{n-j}\|_{\bfK(\xs^n)}^2 + c \|\dv^n\|_{\star,\xs^n}^2 .
\end{equation}

(b) \textit{Estimates for ODE.} 
We rewrite the second equation of \eqref{eq: error equations for BDF} as 
$$
\frac{1}{\tau} \sum_{j=p}^n \delta_{n-j} \ex^{j}  =\ \ev^n - \widehat\dx^n,  
$$
with $\delta_j=0$ for $j>p$ and 
$$
\widehat\dx^n = \dx^n + \frac{1}{\tau} \sum_{j=0}^{p-1} \delta_{n-j} \ex^{j},
$$
where we note that $\widehat\dx^n = \dx^n$ for $n\ge2p$.  With the coefficients of the power series
$$
\mu(\zeta) = \sum_{n=0}^\infty \mu_n \zeta^n = \frac 1{\delta(\zeta)}
$$
we then have, for $n\ge p$,
$$
\ex^n = \tau \sum_{j=p}^n \mu_{n-j}  (\ev^j-\widehat\dx^j).
$$
By the zero-stability of the BDF method of order $p\le 6$ (which states that all zeros of $\delta(\zeta)$ are outside the unit circle with the exception of the simple zero at $\zeta=1$), the coefficients $\mu_n$ are bounded: $|\mu_n| \le c$ for all $n$.

Taking the $K(\xs^n)$ norm on both sides and recalling that by Lemma~\ref{lemma: matrix derivatives} all these norms are uniformly equivalent for $0\le n\tau\le T$, we obtain with the Cauchy--Schwarz inequality
\begin{align*}
\| \ex^n \|_{K(\xs^n)}^2 &\le c \tau \sum_{j=p}^n \| \ev^j-\widehat\dx^j \|_{K(\xs^j)}^2 \\
&\le c \tau \sum_{j=p}^n \| \ev^j \|_{K(\xs^j)}^2 +c \tau \sum_{j=p}^n \| \dx^j \|_{K(\xs^j)}^2 + c \sum_{i=0}^{p-1} \| \ex^i\|_{K(\xs^i)}^2.
\end{align*}
Combining this inequality with \eqref{eq: final estimate for error in v} and using a discrete Gronwall inequality then yields the result. \qed
\end{proof}

\section{Consistency error}
\label{section: consistency}

In this section we show that the consistency errors, that is, the defects defined by \eqref{eq: BDF for exact sol} and obtained by inserting the interpolated exact solution into the numerical method,  are bounded in the required norms by $ C(h^k+\tau^p)$ for the finite element method of polynomial degree $k$ and the $p$-step BDF method. 
%
%As already given in \eqref{eq: BDF for exact sol} the interpolations satisfy the fully discrete problem up to the defects $\dv^n \in \R^{3N}$ and $\dx^n \in \R^{3N}$.
%
%As we have already discussed in Section~5.2 of \cite{soldriven} that no spatial error is committed in the ODE, therefore 
%The defect in $X$ is solely  due to temporal discretization and therefore given by
%\begin{equation*}
%	\dx^n = \frac{1}{\tau} \sum_{j=0}^p \delta_j \xs^{n-j} - \dotxs(t_n) .
%\end{equation*}
%
%
%
%%%%%%%%%%%%%%%%%%%%%%%%%%%%%%%%%%%%%%%%%%%%%%%%%%%%%%%%%%%%%%%%
%%  local definition
\newcommand{\Ih}{\widetilde{I}_h}
%%%%%%%%%%%%%%%%%%%%%%%%%%%%%%%%%%%%%%%%%%%%%%%%%%%%%%%%%%%%%%%%

\bbk
Let us first recall the formula for the defect of the spatial semi-discretization $d_{h,v}(\cdot,t)$ from Section~8 of \cite{soldriven}, for $\psi_h \in S_h[\xs(t)]^3$:
\begin{align*}
	&\int_{\Ga_h[\xs\t]} \!\!\! d_{h,v}(\cdot,t) \cdot \psi_h =\ \int_{\Ga_h[\xs\t]}\!\! \Ih v(\cdot,t) \cdot \psi_h
	+ \alpha \int_{\Ga_h[\xs\t]} \!\!\! \nbgh \Ih v(\cdot,t) \cdot \nbgh \psi_h \\
	&\hskip 2cm + \beta \int_{\Ga_h[\xs\t]} \!\!\! \nbgh \Ih X(\cdot,t) \cdot \nbgh \psi_h -  \int_{\Ga_h[\xs\t]}  \!\!\! g(\cdot,t) \,\nu_{\Ga_h[\xs\t]} \cdot \psi_h ,
\end{align*}
which satisfies the following bounds.
\begin{lemma}{\rm \cite[Lemma~8.1]{soldriven}}
	Let the surface $X$ and its velocity $v$ be sufficiently smooth. Then there exists a constant $c>0$ (independent of $t$) such that for all $h\leq h_0$, with a sufficiently small $h_0>0$, and for all $t\in[0,T]$, the defects $d_{h,v}$ of the $k$th-degree finite element interpolation are bounded as
	\begin{align*}
%	    \|\dv(t_n)\|_{\star,\xs}=&\ 
		\|d_{h,v}(\cdot,t)\|_{H_h\inv(\Ga(\Xs))} \leq c h^k .
	\end{align*}
\end{lemma}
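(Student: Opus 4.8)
The plan is to compare the equation defining the semi-discrete defect $d_{h,v}(\cdot,t)$ with the weak formulation \eqref{weak form} satisfied by the exact solution $(X,v)$, after lifting the discrete test function to the exact surface, and to bound the resulting terms by the $k$th-degree finite element interpolation estimates together with the higher-order geometric perturbation estimates of \cite{Demlow2009,highorder} that relate the interpolated surface $\Ga_h[\xs(t)]$ to the exact surface $\Ga(X(\cdot,t))$.

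First I would fix $t\in[0,T]$ and an arbitrary $\psi_h\in S_h[\xs(t)]^3$ with lift $\psi_h^l\in H^1(\Ga(X(\cdot,t)))^3$. Since $(X,v)$ solves \eqref{weak form}, testing there with $\psi_h^l$ and subtracting from the defining formula for $\int_{\Ga_h[\xs]}d_{h,v}(\cdot,t)\cdot\psi_h$ expresses the defect as a sum of four brackets, the $m$th one being the difference of the $m$th integral over $\Ga_h[\xs(t)]$ (with $\Ih$ inserted) and the corresponding integral over $\Ga(X(\cdot,t))$. Each bracket I would then split, by inserting $v^{-l}$ (the function on $\Ga_h[\xs]$ whose lift is $v$), into an \emph{interpolation part} on the fixed surface $\Ga_h[\xs(t)]$ and a \emph{geometric part} produced by transferring the remaining integral to $\Ga(X)$ via the lift.

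For the zero-order term and the $g$-term, the interpolation part is controlled by $\|\Ih v-v^{-l}\|_{L^2(\Ga_h[\xs])}\le ch^{k+1}\|v\|_{H^{k+1}(\Ga)}$, and the geometric part by the facts that the lift pulls back the surface measure up to a factor differing from $1$ by $O(h^{k+1})$ and that $\nu_{\Ga_h[\xs]}$ approximates $\nu_{\Ga(X)}$ to order $h^k$; tested against the $H^1$-bounded $\psi_h^l$ these are $\le ch^k\|\psi_h^l\|_{H^1(\Ga)}$. For the $\alpha$-term the interpolation part costs $\|\nbgh(\Ih v-v^{-l})\|_{L^2(\Ga_h[\xs])}\le ch^k\|v\|_{H^{k+1}(\Ga)}$ --- this is what fixes the order $h^k$ --- while its geometric part is of higher order $O(h^{k+1})$ via the pullback identity relating $\nbgh$ on $\Ga_h[\xs]$ to $\nbg$ on $\Ga(X)$. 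For the $\beta$-term I would use that $\Ih X(\cdot,t)$ is exactly $\mathrm{id}_{\Ga_h[\xs(t)]}$, whose lift differs from $\mathrm{id}_{\Ga(X)}$ by $O(h^{k+1})$ in $L^\infty$ and by $O(h^k)$ in $H^1$ (this last being precisely the $O(h^k)$ accuracy of the discrete normal); since this bracket carries no interpolation error, combining that $H^1$-estimate with the geometric pullback bound for the bilinear form gives $\le ch^k\|\psi_h^l\|_{H^1(\Ga)}$.

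Collecting the four brackets and using the uniform equivalence of $\|\psi_h^l\|_{H^1(\Ga(X))}$ and $\|\psi_h\|_{H^1(\Ga_h[\xs])}$ yields $\int_{\Ga_h[\xs]}d_{h,v}(\cdot,t)\cdot\psi_h\le ch^k\|\psi_h\|_{H^1(\Ga_h[\xs])}$ for every $\psi_h$; taking the supremum over $\psi_h\neq0$ gives $\|d_{h,v}(\cdot,t)\|_{H_h\inv(\Ga_h[\xs(t)])}\le ch^k$, and since all constants depend only on bounds of the geometry of $\Ga(X(\cdot,t))$ and of $(X,v)$ over $[0,T]$, the bound is uniform in $t$. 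I expect the main obstacle to be the careful bookkeeping of the geometric perturbation estimates, making sure the surface transfers in the $\alpha$- and $\beta$-terms do not reduce the order below $h^k$; the $\beta$-term is the delicate one, since there the geometric (normal-vector) error enters at order exactly $h^k$ with no interpolation-error cushion, and a naive pointwise comparison of mean curvatures would even produce only $O(h^{k-1})$.
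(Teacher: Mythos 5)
The paper itself gives no proof here---the lemma is quoted directly from \cite[Lemma~8.1]{soldriven}---and your sketch reconstructs essentially the argument used there: subtract the exact weak form \eqref{weak form} tested with the lifted test function, split each of the four resulting brackets into an interpolation error on $\Gamma_h[\xs(t)]$ and a geometric error from the lift, and conclude with the degree-$k$ interpolation bounds together with the $O(h^{k+1})$ surface-measure and $O(h^k)$ normal-vector estimates of \cite{Demlow2009}, treating the mean-curvature term weakly via $\Ih X(\cdot,t)=\mathrm{id}_{\Gamma_h[\xs(t)]}$ rather than comparing curvatures pointwise. Your proposal is correct and follows essentially the same route.
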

We will now bound the defect of the full discretization.
\ebk

\begin{lemma}
\label{lemma: defect estimates for BDF}
	Let the surface $X$ and its velocity $v$ be sufficiently smooth. Then there exist $h_0>0$ and $\tau_0>0$  such that for  all $h\leq h_0$  and for all $\tau\le\tau_0$,
%	satisfying the condition 
%	\begin{equation*}
%		 \tau^p \leq c_0 h ,
%	\end{equation*}
	the  consistency errors %$d_v^n$ and $d_x^n$ 
	are bounded as
	\begin{align*}
	&\|\dv^n\|_{\star,\xs^n}=\|d_v^n\|_{H_h\inv(\Ga(\Xs(t_n)))} \, \leq c \big( \tau^p + h^k \big) , \\
	&\|\dx^n\|_{\bfK(\xs^n)}= \|d_x^n\|_{H^1(\Ga(\Xs(t_n)))} \leq c \tau^p ,
	\end{align*}
	where  $c$ is independent of $h$, $\tau$ and $n$ with   $n\tau \leq T$.
\end{lemma}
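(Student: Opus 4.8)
The plan is to split each defect into two parts: a \emph{time-discretization} part coming from the BDF formula applied to the exact flow, and a \emph{spatial} part coming from finite element interpolation, which is already controlled by \cite[Lemma~8.1]{soldriven} (recalled above).

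\textbf{The defect $\dx^n$ in the ODE.} Here there is no spatial interpolation issue at all, since the second equation of \eqref{eq: BDF for exact sol} only involves nodal values: $\dx^n = \frac1\tau\sum_{j=0}^p\delta_j\xs^{n-j} - \vs^n$, and componentwise $(\dx^n)_j = \frac1\tau\sum_{j'=0}^p\delta_{j'}X(q_j,t_{n-j'}) - \partial_t X(q_j,t_n)$. This is exactly the classical consistency error of the $p$-step BDF differentiation formula applied to the smooth function $t\mapsto X(q_j,t)$, so by the standard Peano-kernel estimate (as in \eqref{peano}, only for the derivative rather than the extrapolation polynomial) it is bounded by $c\tau^p \max_{[0,T]}\|\partial_t^{p+1}X(q_j,\cdot)\|$. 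Passing from the nodal-vector bound to the $\bfK(\xs^n)$-norm, I would write $\dx^n$ as the nodal vector of a finite element function $d_x^n \in S_h[\xs^n]^3$ that is the interpolation of a smooth $\R^3$-valued function of size $\bigo(\tau^p)$ with derivatives of the same size, and use equivalence of $\|\cdot\|_{\bfK(\xs^n)}$ with $\|\cdot\|_{H^1(\Ga_h[\xs^n])}$ together with stability of the finite element interpolation in $H^1$; since the underlying smooth function has bounded $W^{1,\infty}$ norm of order $\tau^p$ and $\Ga_h[\xs^n]$ has bounded area, this gives $\|\dx^n\|_{\bfK(\xs^n)}\le c\tau^p$.

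\textbf{The defect $\dv^n$ in the velocity law.} I would insert the interpolated exact velocity into the BDF scheme and add and subtract the semidiscrete equation evaluated at $t_n$, so that $\bfM(\xs^n)\dv^n$ decomposes into (A) the semidiscrete defect $\bfM(\xs^n)\bfd_{h,v}^n$, whose dual norm is $\le ch^k$ by \cite[Lemma~8.1]{soldriven}, plus (B) terms measuring the difference between using the extrapolated nodal vector $\tildexs^n$ and the exact one $\xs^n$ in the matrices $\bfK$, $\beta\bfA$ and in the right-hand side $\bfg$. For the matrix differences I would invoke Lemma~\ref{lemma: matrix identity for extrapolation}: pairing with an arbitrary test vector $\bfw$ and using that lemma gives $(\bfK(\tildexs^n)-\bfK(\xs^n))\vs^n$ and $\beta(\bfA(\tildexs^n)-\bfA(\xs^n))\xs^n$ bounded, in dual pairing against $\bfw$, by $c\tau^p\|\bfw\|_{\bfK(\xs^n)}$ times the ($h$- and $\tau$-uniform) $H^1$ or $W^{1,\infty}$ norms of $\vs^n$ and $\xs^n$; by the characterization \eqref{dual-norm-h} of the dual norm this yields a contribution of order $\tau^p$ to $\|\dv^n\|_{\star,\xs^n}$. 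The right-hand side difference $\bfg(\tildexs^n,t_n)-\bfg(\xs^n,t_n)$ I would handle with exactly the same $\theta$-integration device as in part (v) of the proof of Proposition~\ref{propostion: stability - regularised velocity law}: write it as $\int_0^1\frac{\d}{\d\theta}\int_{\Ga_h^{n,\theta}}g^n\nu_{\Ga_h^{n,\theta}}\cdot\psi_h^\theta\,\d\theta$ for the intermediate surfaces between $\xs^n$ and $\tildexs^n$, apply the Leibniz formula, and use the normal-perturbation estimate (iv) of Lemma~\ref{lemma: technicals} together with the fact (established in the proof of Lemma~\ref{lemma: matrix identity for extrapolation}) that the relevant finite element function $\widetilde e_h^{n,\theta}$ with nodal vector $\tildexs^n-\xs^n$ has $W^{1,\infty}$ norm $\bigo(\tau^p)$; this again produces an $\bigo(\tau^p)$ term. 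Collecting (A) and (B) gives $\|\dv^n\|_{\star,\xs^n}\le c(h^k+\tau^p)$.

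\textbf{Main obstacle.} The time-discretization estimates and the matrix-difference estimates are routine once Lemmas~\ref{lemma: technicals}, \ref{lemma: matrix identity for extrapolation} and \cite[Lemma~8.1]{soldriven} are available; the only slightly delicate point is bookkeeping the transition from nodal-vector estimates to the geometry-dependent norms $\|\cdot\|_{\bfK(\xs^n)}$ and $\|\cdot\|_{\star,\xs^n}$, in particular making sure that the $H^1$-stability of finite element interpolation and the norm equivalences are applied on the correct surfaces and that the constants remain uniform in $h$, $\tau$ and $n$. I would carry out the $\dx^n$ bound first (it is purely about the BDF formula), then the semidiscrete part of $\dv^n$ (cited), and finally the extrapolation-induced part of $\dv^n$, which is where the surface-comparison lemmas do all the work.
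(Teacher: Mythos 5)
Your proposal is correct and follows essentially the same route as the paper: the defect $\dx^n$ is a pure BDF differentiation error handled by Taylor/Peano-kernel expansion, and $\dv^n$ is split into the spatial semidiscrete defect $d_{h,v}$ (bounded by $ch^k$ via \cite[Lemma~8.1]{soldriven}) plus extrapolation-induced differences between the surfaces $\Ga_h[\widetilde{\bfx}_\ast^n]$ and $\Ga_h[\xs^n]$, each of order $\tau^p$, estimated in the dual norm through \eqref{dual-norm-h}. The only cosmetic difference is that you phrase the extrapolation terms in matrix--vector form and invoke Lemma~\ref{lemma: matrix identity for extrapolation} directly (with the $\theta$-integration and normal-perturbation device for the $\bfg$-term), whereas the paper writes the same pairs as surface integrals and re-derives the $L^2$--$L^2$--$L^\infty$ estimates, additionally using a $W^{1,\infty}$ interpolation bound from \cite{Demlow2009}; both yield the identical $c(h^k+\tau^p)$ bound.
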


\begin{proof}
For the defect in $v$, the corresponding finite element function $d_v^n \in S_h[\tildexs^n]$ with nodal values $\dv^n$ satisfies the following: for all finite element functions $\bar\psi_h \in S_h[\xs^n]$ and the corresponding $\psi_h\in S_h[\tildexs^n]$ with the same nodal values,
\begin{equation}
\label{eq: defect def for v}
	\begin{aligned}
		\int_{\Ga_h[\xs^n]} \!\!\!\! d_v^n \cdot \bar\psi_h 
		=&\ \int_{\Ga_h[\tildexs^{n}]} \!\!\!\! \Ih v(\cdot,t_n) \cdot \psi_h  
		+ \alpha \int_{\Ga_h[\tildexs^{n}]} \!\!\!\! \nbgh \Ih v(\cdot,t_n) \cdot \nbgh \psi_h \\
		&\ + \beta \int_{\Ga_h[\tildexs^{n}]} \!\!\!\! \nbgh \Ih X(\cdot,t_n) \cdot \nbgh \psi_h
		- \int_{\Ga_h[\tildexs^{n}]} \!\!\!\! g(\cdot,t_n) \nu_{\Ga_h[\tildexs^{n}]} \cdot \psi_h ,
	\end{aligned}
\end{equation}
where $\Ih v(\cdot,t_n), \Ih X(\cdot,t_n)\in S_h[\tildexs^{n}]^3$ denote the finite element interpolation of $v(\cdot,t_n)$ and $X(\cdot,t_n)$, respectively, on $\Ga_h[\tildexs^{n}]$.
Let us first rewrite \eqref{eq: defect def for v}, by subtracting the weak form of the problem \eqref{weak form}.  For the first term on the right-hand side, by adding and subtracting, this yields
\begin{align*}
	&\int_{\Ga_h[\tildexs^{n}]} \!\!\!\!\!\! \Ih v(\cdot,t_n) \cdot \psi_h  - \int_{\Ga(X(t_n))} \!\!\!\!\!\!\!\! v(\cdot,t_n) \cdot \psi_h^l \\
	&=\ \int_{\Ga_h[\tildexs^{n}]} \!\!\!\!\!\! \Ih v(\cdot,t_n) \cdot \psi_h - \int_{\Ga_h[\xs^n]} \!\!\!\!\!\! \Ih v(\cdot,t_n) \cdot \psi_h \\
	&\ \ \ + \int_{\Ga_h[\xs^n]} \!\!\!\!\!\! \Ih v(\cdot,t_n) \cdot \psi_h - \int_{\Ga(X(t_n))} \!\!\!\!\!\!\!\! v(\cdot,t_n) \cdot \psi_h^l .
\end{align*}
Note that the last pair is simply a spatial defect, therefore repeating the same process for all four terms, and using the spatial defect $d_{h,v}$ from Section~8 of \cite{soldriven}, we obtain 
\begin{align*}
	&\int_{\Ga_h[\xs^n]} \!\!\!\!  d_v^n \cdot \psi_h = \int_{\Ga_h[\tildexs^{n}]} \!\!\!\! \Ih v(\cdot,t_n) \cdot \psi_h - \int_{\Ga_h[\xs^n]} \!\!\!\! \Ih v(\cdot,t_n) \cdot \psi_h \\
	&\ + \alpha \int_{\Ga_h[\tildexs^{n}]} \!\!\!\! \nbgh \Ih v(\cdot,t_n) \cdot \nbgh \psi_h 
	- \alpha \int_{\Ga_h[\xs^n]} \!\!\!\! \nbgh \Ih v(\cdot,t_n) \cdot \nbgh \psi_h \\
	&\ + \beta \int_{\Ga_h[\tildexs^{n}]} \!\!\!\! \nbgh \Ih X(\cdot,t_n) \cdot \nbgh \psi_h  - \beta \int_{\Ga_h[\xs^n]} \!\!\!\! \nbgh \Ih X(\cdot,t_n) \cdot \nbgh \psi_h\\
	&\ - \int_{\Ga_h[\tildexs^{n}]} \!\!\!\! g(\cdot,t_n) \nu_{\Ga_h[\tildexs^{n}]} \cdot \psi_h + \int_{\Ga_h[\xs^n]} \!\!\!\! g(\cdot,t_n) \nu_{\Ga_h[\tildexs^{n}]} \cdot \psi_h \\
	&\ + \int_{\Ga_h[\xs^n]} \!\!\!\! d_{h,v}(\cdot,t_n) \cdot \psi_h.
\end{align*}
We estimate the defect $d_v^n$ pairwise, using similar tools as in part (a) of the proof of Proposition~\ref{propostion: stability - regularised velocity law} and recalling \eqref{dual-norm-h}.
%\begin{align*} 
%	\|\dv^n\|_{\star,\xs^n} =&\ \|d_v^n\|_{H_h^{-1}(\Ga_h[\xs^n])} = \sup_{0 \neq \psi_h \in S_h(\Ga_h[\xs^n])^3} \frac{\int_{\Ga_h[\xs^n]} d_v^n \cdot \psi_h}{\|\psi_h\|_{H^1(\Ga_h[\xs^n])}} ,
%\end{align*}

For the first pair, we use the setting of Lemma~\ref{lemma: matrix identity for extrapolation}, and then a Cauchy--Schwarz inequality and an $L^2 - L^2 - L^\infty$ estimate yield
\begin{align*}
	&\hspace{-3mm} \Big| \int_{\Ga_h[\tildexs^{n}]} \!\!\!\! \Ih v(\cdot,t_n) \cdot \psi_h - \int_{\Ga_h[\xs^n]} \!\!\!\! \Ih v(\cdot,t_n) \cdot \psi_h \Big| \\
	=&\ \Big| \int_0^1 \int_{\Gamma_h^{n,\theta}} \psi_h^\theta (\nabla_{\Gamma_h^{n,\theta}} \cdot {\widetilde e}_h^{n,\theta} ) v_{\ast,h}^{n,\theta} \d \theta \Big| \\
	\leq &\ \int_0^1 \|\psi_h^\theta\|_{L^2(\Gamma_h^{n,\theta})}
	\|\nabla_{\Gamma_h^{n,\theta}} \cdot {\widetilde e}_h^{n,\theta} \|_{L^2(\Gamma_h^{n,\theta})} 
	\|v_{\ast,h}^{n,\theta}\|_{L^\infty(\Gamma_h^{n,\theta})} \d \theta \\
	\leq &\ c  \|\psi_h^0\|_{L^2(\Ga_h^{0,n})} 
	\| {\widetilde e}_h^{n,0} \|_{H^1(\Ga_h^{0,n})} 
	\| v_{\ast,h}^{n,0}\|_{L^{\infty}(\Ga_h^{0,n})} \\
	\leq &\ c  \|\psi_h\|_{L^2(\Ga_h[\xs^n])} 
	\| {\widetilde e}_h^{n} \|_{H^1(\Ga_h[\xs^n])} \\
	&\ \cdot 
	\Big(\| v_{\ast}(\cdot,t_n)\|_{L^{\infty}(\Ga_h[\xs^n])} + \| v_{\ast,h}(\cdot,t_n)-v_{\ast}(\cdot,t_n)\|_{L^{\infty}(\Ga_h[\xs^n])}\Big) \\
	\leq &\ c  \|\psi_h\|_{L^2(\Ga_h[\xs^n])} 
	\| {\widetilde e}_h^{n} \|_{H^1(\Ga_h[\xs^n])} (1+ch^2)
	\| v_{\ast}(\cdot,t_n)\|_{W^{1,\infty}(\Ga_h[\xs^n])}  \\
	\leq &\ c \|\tildexs^n-\xs^n\|_{\bfK(\xs^n)} \|\psi_h\|_{L^2(\Ga_h[\xs^n])}  \\ 
	\leq &\ c \tau^p \|\psi_h\|_{L^2(\Ga_h[\xs^n])}  ,
\end{align*}
where we used a $W^{1,\infty}$ interpolation estimate from \cite[Proposition~2.7]{Demlow2009}, and the last inequality follows from \eqref{peano}.

%
% 			the second pair in detail
%
%\begin{align*}
%	&\ \alpha \int_{\Ga_h[\tildexs^{n}]} \!\!\!\! \nbgh \Ih v(\cdot,t_n) \cdot \nbgh \psi_h 
%	- \alpha \int_{\Ga_h[\xs^n]} \!\!\!\! \nbgh \Ih v(\cdot,t_n) \cdot \nbgh \psi_h \\
%	=&\ \alpha \int_0^1 \!\! \int_{\Gamma_h^{n,\theta}} \!\!\! \nabla_{\Gamma_h^{n,\theta}} \psi_h^{n,\theta} \cdot \bigl( D_{\Gamma_h^{n,\theta}} {\widetilde e}_h^{n,\theta} \bigr) \nabla_{\Gamma_h^{n,\theta}} v_{*}^{n,\theta} \, \d\theta \\ 
%	\leq &\ \alpha \int_0^1 \| \nabla_{\Gamma_h^{n,\theta}} \psi_h^{n,\theta} \|_{L^2(\Gamma_h^{n,\theta}) }\, \| D_{\Gamma_h^{n,\theta}} {\widetilde e}_h^{n,\theta} \|_{L^2(\Gamma_h^{n,\theta})} \, \|\nabla_{\Gamma_h^{n,\theta}} v_{*}^{n,\theta} \|_{L^{\infty}(\Gamma_h^{n,\theta})}\, \d\theta \\
%	\leq &\ c  \|\psi_h^0\|_{H^1(\Ga_h^{0,n})} 
%	\| {\widetilde e}_h^{n,0} \|_{H^1(\Ga_h^{0,n})} 
%	\| v_{\ast,h}^{n,0}\|_{W^{1,\infty}(\Ga_h^{0,n})} \\
%	\leq &\ c  \|\psi_h\|_{H^1(\Ga_h[\xs^n])} 
%	\| {\widetilde e}_h^{n} \|_{H^1(\Ga_h[\xs^n])} \\
%	&\ \cdot 
%	\Big(\| v_{\ast}(\cdot,t_n)\|_{W^{1,\infty}(\Ga_h[\xs^n])} + \| v_{\ast,h}(\cdot,t_n)-v_{\ast}(\cdot,t_n)\|_{W^{1,\infty}(\Ga_h[\xs^n])}\Big) \\
%	\leq &\ c  \|\psi_h\|_{H^1(\Ga_h[\xs^n])} 
%	\| {\widetilde e}_h^{n} \|_{H^1(\Ga_h[\xs^n])} (1+ch)
%	\| v_{\ast}(\cdot,t_n)\|_{W^{2,\infty}(\Ga_h[\xs^n])}  \\
%	\leq &\ c \|\tildexs^n-\xs^n\|_{\bfK(\xs^n)} \|\psi_h\|_{H^1(\Ga_h[\xs^n])} \\
%	 \leq &\ c \tau^p \|\psi_h\|_{H^1(\Ga_h[\xs^n])} ,
%\end{align*}
%....

The other three pairs are again estimated similarly as above, and we finally obtain the bounds
\begin{align*}
	 \Big| \int_{\Ga_h[\tildexs^{n}]} \!\!\!\! \nbgh \Ih v(\cdot,t_n) \cdot \nbgh \psi_h 
	- &\ \int_{\Ga_h[\xs^n]} \!\!\!\! \nbgh \Ih v(\cdot,t_n) \cdot \nbgh \psi_h \Big| \\
	\leq &\ c \tau^p \|\psi_h\|_{H^1(\Ga_h[\xs^n])}\\[2mm]
	 \Big| \int_{\Ga_h[\tildexs^{n}]} \!\!\!\! \nbgh \Ih X(\cdot,t_n) \cdot \nbgh \psi_h  - &\ \int_{\Ga_h[\xs^n]} \!\!\!\! \nbgh \Ih X(\cdot,t_n) \cdot \nbgh \psi_h \Big| \\
	\leq &\ c \tau^p \|\psi_h\|_{H^1(\Ga_h[\xs^n])}\\[2mm]
	\Big| \int_{\Ga_h[\tildexs^{n}]} \!\!\!\! g(\cdot,t_n) \nu_{\Ga_h[\tildexs^{n}]} \cdot \psi_h - &\ \int_{\Ga_h[\xs^n]} \!\!\!\! g(\cdot,t_n) \nu_{\Ga_h[\tildexs^{n}]} \cdot \psi_h \Big| \\
	\leq &\ c \tau^p \|\psi_h\|_{H^1(\Ga_h[\xs^n])} .
\end{align*}
Furthermore, as shown in Lemma~8.1 of \cite{soldriven}, the spatial defect $d_{h,v}(\cdot,t_n)$ is bounded by
\begin{equation*}
	\int_{\Ga_h[\xs^n]} \!\!\!\! d_{h,v}(\cdot,t_n) \cdot \psi_h  \leq c h^k \|\psi_h\|_{H^1(\Ga_h[\xs^n])} .
\end{equation*}
Combining the above estimates, we obtain the bound $\|\dv^n\|_{\star,\xs^n} \leq c \big( \tau^p + h^k \big)$.
The defect in $X$ is  given by
\begin{equation*}
	\dx^n = \frac{1}{\tau} \sum_{j=0}^p \delta_j \xs(t_{n-j}) - \dotxs(t_n) 
\end{equation*}
and is solely  due to temporal discretization.
The bound $\|\dx^n\|_{\bfK(\xs^n)} \leq c \tau^p $ then follows by Taylor expansion.
\qed
\end{proof}

%The first equation can be equivalently written as
%\begin{align*}
%	\int_{\Ga_h[\xs]} \!\!\! d_v \cdot \psi_h =&\ \int_{\Ga_h[\xs]}\!\! (I_h v)^{-l} \cdot \psi_h
%	+ \alpha \int_{\Ga_h[\xs]} \!\!\! \nabla_{\Ga_h[\xs]}  (I_h v)^-l \cdot \nabla_{\Ga_h[\xs]}  \psi_h \\
%	- &\ \int_{\Ga_h[\xs]}  g(\cdot,t_n) \,\nu_{\Ga_h[\xs]} \cdot \psi_h
%\end{align*}

\section{Proof of Theorem~\ref{theorem: main}}
\label{section: proof completed}

The errors are decomposed using interpolations and the definition of lifts from Section~\ref{section:lifts}. We denote by $\widehat I_h v\in S_h[\xs]$ the finite element interpolation of $v$ on the interpolated surface $\Gamma_h[\xs]$ and by $I_hv =(\widehat I_h v)^l$ its lift to the exact surface $\Gamma(X)$. We write
\begin{align*}
%	u_h^{L}  - u  =&\ \big(\widehat u_h  - \Ih u \big)^{l} + \big(I_h u  - u \big) , \\
	(v_h^n)^{L}  - v(\cdot,t_n)  =&\ \big(\widehat v_h^n  - \widehat I_h v(\cdot,t_n) \big)^{l} + \big(I_h v(\cdot,t_n)  - v(\cdot,t_n) \big) , \\
	(X_h^n)^L  - X(\cdot,t_n)  =&\ \big( \widehat X_h^n  - \widehat I_h X(\cdot,t_n)  \big)^{l} +  \big(I_h X(\cdot,t_n)  - X(\cdot,t_n) \big).
\end{align*}
%We collect their nodal values as $\us$, $\vs$ and $\xs$ for $I_h u$, $I_h v$ and $\Xs$, respectively. Then
The last terms  in these formulas can be bounded in the $H^1(\Gamma)$ norm by $Ch^k$, using the interpolation bounds of \cite{highorder}.

To bound the first terms on the right-hand sides, we first use the defect bounds of Lemma~\ref{lemma: defect estimates for BDF}, which then, under the mild stepsize restriction, together with the stability estimate of Proposition~\ref{propostion: stability - regularised velocity law} proves the result,
since by the norm equivalences from Lemma~\ref{lemma: technicals} and equations \eqref{M-L2}--\eqref{A-H1} we have
\begin{align*}
	\| \big(\widehat v_h^n - \widehat I_h v(\cdot,t_n)\big)^{l} \|_{L^2(\Ga(\cdot,t_n))} \leq &\
	c \| \widehat v_h^n - \widehat I_h v(\cdot,t_n) \|_{L^2(\Ga_h[\xs^n])} \\
	= &\ c \|\ev^n\|_{\bfM(\xs^n)} ,
	\\
	\| \nabla_{\Gamma} \big(\widehat v_h^n - \widehat I_h v(\cdot,t_n)\big)^{l} \|_{L^2(\Ga_h[\xs^n]))} \leq &\ 
	c \| \nabla_{\Gamma_h^*} \big(\widehat v_h^n - \widehat I_h v(\cdot,t_n) \big) \|_{L^2(\Ga_h[\xs^n])} \\
	= &\ c \|\ev^n\|_{\bfA(\xs^n)},
\end{align*}
and similarly for $\widehat X_h^n - \widehat I_h X(\cdot,t_n)$.

%
%\section{Extension to other velocity laws}
%\label{section: extensions}
%
%In this section we consider the extension of our results to dynamic velocity laws and to coupling with surface. We only consider the stability of the linearly implicit BDF methods, since bounds for the consistency error are obtained by the same arguments as before.
%
\section{A dynamic velocity law}
\label{section: dynamic}

\subsection{Weak formulation and ESFEM / BDF full discretization}
We now consider the dynamic velocity law \eqref{v-dyn}, viz.,
\begin{equation*}
    \mat v + v \nb_{\Ga(X)} \cdot v  - \alpha \laplace_{\Ga(X)} v = g(\cdot,t) \, \nu_{\Ga(X)},
\end{equation*}
where again $g:\R^3\times\R\to\R$ is a given smooth function of $(x,t)$, and $\alpha>0$ is a fixed parameter. This problem is considered together with the ordinary differential equation \eqref{velocity} for the positions $X$ determining the surface $\Gamma(X)$. Initial values are specified for~$X$ and $v$.

The weak formulation of the dynamic velocity law \eqref{v-dyn} reads as follows:
Find $v(\cdot,t) \in  W^{1,\infty}(\Ga(X(\cdot,t)))^3 $ such that for all test functions $\psi(\cdot,t)  \in H^1(\Ga(X(\cdot,t) ))^3$ with vanishing material derivative,
\begin{equation}
\label{weak form - dynamic}
    \begin{aligned}
      \frac{\d}{\d t}  \int_{\Ga(X)} \!\!v \cdot \psi 
        &\ + \alpha \int_{\Ga(X)} \!\! \nabla_{\Gamma(X)} v \cdot \nabla_{\Gamma(X)} \psi 
        = \int_{\Ga(X)} \!\! g \,\nu_{\Ga(X)} \cdot \psi ,
    \end{aligned}
\end{equation}
together with the ordinary differential equation \eqref{velocity} for the positions $X$ determining the surface $\Gamma(X)$. The finite element space discretization is done in the usual way. We forego the straightforward formulation and
immediately present the matrix--vector formulation of the semi-discretization. As in Section~\ref{subsection:DAE}, the nodal vectors $\bfv(t)\in\R^{3N}$ of the finite element function $v_h(\cdot,t)$, together with the surface nodal vector $\bfx(t)\in\R^{3N}$ satisfy a system of ordinary differential equations with matrices and driving term as in Section~\ref{subsection:DAE}:
\begin{equation}
\label{eq: DAE form - dynamic}
    \begin{aligned}
        \diff \Big(\bfM(\bfx) \bfv\Big) + \bfA(\bfx)\bfv =&\ \bfg(\bfx,t) , \\
        \dot\bfx =&\ \bfv .
    \end{aligned}
\end{equation}
We apply a $p$-step linearly implicit BDF method to the above ODE system with a step size $\tau >0 $: with $t_n= n \tau \leq T $ and with the extrapolated nodal vector $\tildexs^n$ defined by \eqref{eq: extrapolation of u def}, 
the new nodal vectors of velocity and position, $\bfv^n$ and $\bfx^n$, respectively, are determined from the following system of linear equations:
%\begin{equation}
%\label{eq: BDF for dynamic}
%	\begin{aligned}
%		%\delta_0 \bfM(\tildex^n) \bfv^n + \tau \bfA(\tildex^n)\bfv^n =&\ \tau \bfg(\tildex^n,t) - \sum_{j=1}^p \delta_j \bfM(\bfx^{n-j}) \bfv^{n-j}, \\
%		\frac{1}\tau \delta_0\bfM(\tildex^{n})\bfv^{n} +\frac{1}{\tau} \sum_{j=1}^p &\,\delta_j \bfM(\bfx^{n-j})\bfv^{n-j} + \ \bfA(\tildex^n)\bfv^n =\bfg(\tildex^n,t)\\
%		%&= \bfg(\tildex^n,t) + \frac{1}{\tau} \sum_{j=1}^p \delta_j \big(\bfM(\tildex^n) - \bfM(\bfx^{n-j})\big) \bfv^{n-j}, \\
%		\frac{1}{\tau} \sum_{j=0}^p &\,\delta_j \bfx^{n-j} =\ \bfv^n .
%	\end{aligned}
%\end{equation}
\begin{equation}
\label{eq: BDF for dynamic}
	\begin{aligned}
		%\delta_0 \bfM(\tildex^n) \bfv^n + \tau \bfA(\tildex^n)\bfv^n =&\ \tau \bfg(\tildex^n,t) - \sum_{j=1}^p \delta_j \bfM(\bfx^{n-j}) \bfv^{n-j}, \\
		\frac{1}{\tau} \sum_{j=0}^p &\,\delta_j \bfM(\tildex^{n-j})\bfv^{n-j} + \ \bfA(\tildex^n)\bfv^n =\bfg(\tildex^n,t)\\
		%&= \bfg(\tildex^n,t) + \frac{1}{\tau} \sum_{j=1}^p \delta_j \big(\bfM(\tildex^n) - \bfM(\bfx^{n-j})\big) \bfv^{n-j}, \\
		\frac{1}{\tau} \sum_{j=0}^p &\,\delta_j \bfx^{n-j} =\ \bfv^n .
	\end{aligned}
\end{equation}
As in Section~\ref{section: problem}, the nodal vector $\bfx^n$ defines the discrete surface $\Gamma_h[\bfx^n]=\Gamma(X_h^n)$, which is to approximate the exact surface $\Gamma(X)$, and we obtain the position and velocity approximations \eqref{x-v-approx}.

\subsection{Statement of the error bound}
The following result is the analogue of Theorem~\ref{theorem: main} for the dynamic velocity law. We use the same notation for the lifted approximations.

\begin{theorem}
\label{theorem: main-dyn}
    Consider the ESFEM / BDF linearly implicit full discretization \eqref{eq: BDF for dynamic} of the dynamic velocity equation \eqref{v-dyn}, using finite elements of polynomial degree~$k\ge 2$ and BDF methods of order $p\le 5$.
    We assume quasi-uniform admissible triangulations of the initial surface and initial values chosen by finite element interpolation of the initial data for $X$.
    Suppose that the problem admits an exact solution $X,v$ that is sufficiently smooth (say, of class  $C([0,T],H^{k+1})\cap C^{p+1}([0,T],W^{1,\infty})$) on the time interval $0\le t \le T$,  and that the flow map $X(\cdot,t):\Gamma_0\to \Gamma(t)\subset\R^3$ is non-degenerate for $0\le t \le T$, so that $\Gamma(t)$ is a regular surface. Suppose further that the starting values are sufficiently accurate: for $i=0,\dots, p-1$,
  $$
  \| (X_h^i)^L - X(\cdot,i\tau) \|_{H^1(\Ga^0)^3} +  \| (v_h^i)^L - v(\cdot,i\tau) \|_{H^1(\Ga^0)^3}\le C_0  (h^k+\tau^{p}).
  $$
    Then, there exist $h_0 >0$, $\tau_0>0$ and $c_0>0$ such that for all mesh widths $h \leq h_0$ and step sizes $\tau\le\tau_0$ satisfying the mild stepsize restriction
    $
    \tau^p \le c_0 h,
    $
    the following error bounds hold over the exact surface $\Ga(t_n)=\Ga(X(\cdot,t_n))$
uniformly  for $0\le t_n=n\tau \le T$:
%    \begin{equation*}
%   %     \begin{aligned}
%            \bigg(\|u_h^L(\cdot,t) - u(\cdot,t)\|_{L^2(\Ga(t))}^2 + \int_0^t \|u_h^L(\cdot,s) - u(\cdot,s)\|_{H^1(\Ga(s))}^2 \,\d s \\
%            \bigg)^{1/2} \leq C h^k
%  %      \end{aligned}
%    \end{equation*}
%    and
    \begin{equation*}
 	\begin{aligned}
   		\|(x_h^n)^{L} - \mathrm{id}_{\Gamma(t_n)}\|_{H^1(\Ga(t_n))^3} &\leq C(h^k+\tau^p),\\
		 \|(v_h^n)^{L} - v(\cdot,t_n)\|_{L^2(\Ga(t_n))^3} +   \biggl( \sum_{j=p}^n \| (v_h^j)^{L} - v(\cdot,t_j)&\|_{H^1(\Ga(t_j))^3}^2 \biggr)^{1/2}\\
	 &\leq C(h^k+\tau^p).
 	\end{aligned}
    \end{equation*}
    The constant $C$ is independent of  $h$ and $\tau$ and $n$ with $n\tau\le T$, but depends on bounds of higher derivatives of the solution $(X,v)$,  and on the length $T$ of the time interval.
\end{theorem}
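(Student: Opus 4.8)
The plan is to mirror the proof of Theorem~\ref{theorem: main} — error equations in the matrix--vector formulation, a stability estimate bounding errors by defects, consistency estimates, and a final decomposition into interpolation and finite element parts — the only structural change being that, for the velocity error, the zero-stability argument of part~(b) of the proof of Proposition~\ref{propostion: stability - regularised velocity law} must be replaced by an energy estimate resting on the $G$-stability theory of Dahlquist~\cite{Dahlquist} and the multiplier technique of Nevanlinna and Odeh~\cite{NevanlinnaOdeh}, in the spirit of the analysis of linear parabolic equations on prescribed moving surfaces in~\cite{LubichMansourVenkataraman_bdsurf}. First I would insert the interpolated exact solution $(\xs^n,\vs^n)$ into~\eqref{eq: BDF for dynamic}, defining the defects $\dv^n$ (to be measured in the dual norm $\|\cdot\|_{\star,\xs^n}$) and $\dx^n$ (in the $\bfK(\xs^n)$ norm), and subtract to obtain error equations for $\ev^n=\bfv^n-\vs^n$ and $\ex^n=\bfx^n-\xs^n$. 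I note that the nonlinear term $v\,\nbg\!\cdot v$ of~\eqref{v-dyn} does not appear on its own: by the transport property of the discrete basis functions it is, exactly as at the continuous level, absorbed into $\diff(\bfM(\bfx)\bfv)$, so that the nonlinearity and the coupling between $\bfv$ and $\bfx$ enter the discrete equations only through the dependence of $\bfM$, $\bfA$ and $\bfg$ on the nodal vector.

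The heart of the argument is a stability estimate, which I expect to take the form
\[
 \|\ev^{n}\|_{\bfM(\xs^n)}^2 + \tau\sum_{j=p}^n \|\ev^j\|_{\bfA(\xs^j)}^2 + \|\ex^n\|_{\bfK(\xs^n)}^2
 \ \le\ C\,\tau\sum_{j=p}^n\bigl(\|\dx^j\|_{\bfK(\xs^j)}^2 + \|\dv^j\|_{\star,\xs^j}^2\bigr) + C\sum_{i=0}^{p-1}\bigl(\|\ev^i\|_{\bfK(\xs^i)}^2 + \|\ex^i\|_{\bfK(\xs^i)}^2\bigr),
\]
valid under the smallness of defects and starting errors and the stepsize restriction $\tau^p\le c_0 h$. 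I would prove it by induction on $n$: assuming it for $p,\dots,n-1$, an inverse inequality together with~\eqref{eq: extrap norm equivalence} and the stepsize restriction give $W^{1,\infty}$ control of the extrapolated position error, as in~\eqref{eq: assumed bounds}, which puts one in a position to invoke Lemma~\ref{lemma: technicals}. Then I would test the velocity error equation with $\tau(\ev^n-\eta_p\ev^{n-1})$, where $\eta_p\in[0,1)$ is the Nevanlinna--Odeh multiplier (which exists only for $p\le 5$), and sum over $n$. After splitting $\bfM(\widetilde\bfx^{n-j})\bfv^{n-j}-\bfM(\widetilde\bfx_*^{n-j})\vs^{n-j}$ into a principal part $\bfM(\widetilde\bfx_*^{n-j})\ev^{n-j}$ and a mass-matrix perturbation, the principal part — after freezing the mass matrix at time $t_n$, the $\bigo(\tau)$ time-variation being absorbed via Lemma~\ref{lemma: matrix derivatives} — produces by Dahlquist's inequality the telescoping lower bound $\|\bfE_v^n\|_G^2-\|\bfE_v^{p-1}\|_G^2$ for a $G$-norm of the bundled vector $\bfE_v^n=(\ev^n,\dots,\ev^{n-p+1})$; the term $\bfA(\widetilde\bfx_*^n)\ev^n$ gives the coercive contribution $(1-\eta_p)\,\tau\sum_j\|\ev^j\|_{\bfA(\xs^j)}^2$ up to harmless boundary terms, which is exactly where $\eta_p<1$, hence $p\le 5$, is needed. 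The remaining contributions — the mass- and stiffness-matrix differences between $\Gamma_h[\widetilde\bfx^{n-j}]$ and $\Gamma_h[\widetilde\bfx_*^{n-j}]$ and the difference of the driving terms — are estimated by Lemma~\ref{lemma: technicals} and the inductive smallness, and yield terms that are either absorbed or of the Gronwall-compatible form $c\,\tau\sum(\|\ev\|_{\bfM}^2+\|\ex\|_{\bfK}^2)$. The position error equation $\tfrac1\tau\sum_j\delta_j\ex^{n-j}=\ev^n-\dx^n$ is treated exactly as in part~(b) of the proof of Proposition~\ref{propostion: stability - regularised velocity law}, using zero-stability of BDF (valid for $p\le 6$, so imposing nothing beyond $p\le 5$), and coupled into a discrete Gronwall inequality.

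The consistency estimates are the analogue of Lemma~\ref{lemma: defect estimates for BDF}: $\dv^n$ splits into a spatial part bounded by $ch^k$ (Lemma~8.1 of~\cite{soldriven}), a mass-matrix extrapolation part bounded by $c\tau^p$ (Lemma~\ref{lemma: matrix identity for extrapolation} together with the Peano-kernel formula~\eqref{peano}), and a BDF time-discretization part for $\diff(\bfM(\xs)\vs)$ bounded by $c\tau^p$ via Taylor expansion; $\dx^n$ is purely temporal and $\bigo(\tau^p)$. Inserting these into the stability estimate under $\tau^p\le c_0 h$ gives $\|\ev^n\|_{\bfM(\xs^n)}^2 + \tau\sum_{j=p}^n\|\ev^j\|_{\bfA(\xs^j)}^2 \le C(h^k+\tau^p)^2$ and $\|\ex^n\|_{\bfK(\xs^n)}^2\le C(h^k+\tau^p)^2$. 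Finally, decomposing $(v_h^n)^L-v(\cdot,t_n)$ and $(X_h^n)^L-X(\cdot,t_n)$ into a finite element error — bounded, via the norm equivalences of Lemma~\ref{lemma: technicals} and~\eqref{M-L2}--\eqref{A-H1}, by the discrete errors just estimated — and an interpolation error bounded by $ch^k$ using~\cite{highorder,Demlow2009}, and lifting to $\Gamma(t_n)$, delivers the asserted bounds.

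I expect the main obstacle to be the stability estimate for the $\diff(\bfM(\bfx)\bfv)$ term: the $G$-stability/multiplier machinery has to be carried out on an evolving \emph{numerical} surface whose nodal vector is itself an unknown coupled to $\bfv$, and the mass-matrix-perturbation terms carry a factor $\tau^{-1}$ coming from the BDF difference quotient. The decisive point will be to reorganize these perturbations — exploiting $\sum_{j=0}^p\delta_j=0$ and the smoothness in time of the exact positions, together with the smallness of $\widetilde e_x$ from the inductive hypothesis — so that each reduces to a genuinely $\bigo(1)$ (or small) quantity for which a discrete Gronwall argument applies and no negative power of $\tau$ survives. This is precisely the difficulty that the framework of~\cite{LubichMansourVenkataraman_bdsurf}, here combined with Lemmas~\ref{lemma: technicals}--\ref{lemma: matrix identity for extrapolation}, is designed to overcome.
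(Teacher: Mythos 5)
Your proposal follows essentially the same route as the paper: the error equations \eqref{eq:exact solution for dyn velocity law} in matrix--vector form, an energy estimate obtained by testing with $\bfe_\bfv^n-\eta\,\bfe_\bfv^{n-1}$ and combining Lemma~\ref{lemma: Dahlquist} (G-stability) with Lemma~\ref{lemma: NevanlinnaOdeh multiplier} (hence $p\le 5$), the position error treated by zero-stability as in part~(b) of Proposition~\ref{propostion: stability - regularised velocity law}, then consistency bounds as in Section~\ref{section: consistency} and the lift/interpolation decomposition of Section~\ref{section: proof completed}. This is exactly the argument of Proposition~\ref{proposition: stability - dynamic velocity law} and the surrounding discussion, including your correct identification of the mass-matrix perturbation terms carrying the BDF difference quotient as the delicate point.
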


\subsection{Auxiliary results by Dahlquist and Nevanlinna \& Odeh}

While the formulations of Theorems~\ref{theorem: main} and ~\ref{theorem: main-dyn} are very similar, the proofs differ substantially in the stability analysis. In this subsection
 we recall two important results that combined permit us to use energy estimates for BDF methods up to order 5: the first result is from Dahlquist's $G$-stability theory, and the second one from the multiplier technique of Nevanlinna and Odeh.
 These results have previously been used in the error analysis of BDF methods for various parabolic problems in
\cite{AkrivisLubich_quasilinBDF,AkrivisLiLubich_quasilinBDF,KovacsPower_quasilinear,LubichMansourVenkataraman_bdsurf}.

\begin{lemma}[Dahlquist \cite{Dahlquist}]
\label{lemma: Dahlquist}
    Let $\delta(\zeta)=\sum_{j=1}^p\delta_j\zeta^j$ and $\mu(\zeta)=\sum_{j=1}^p\mu_j\zeta^j$ be polynomials of degree at most $p$ (at least one of them of degree $p$) that have no common divisor. Let $\la \ \cdot , \cdot \ \ra$ denote an inner product on $\R^N$. 
  %  with associated norm $\| \cdot \|$. 
    If
    \begin{equation*}
        \textnormal{Re} \frac{\delta(\zeta)}{\mu(\zeta)} > 0, \qquad \textrm{for} \quad |\zeta|<1,
    \end{equation*}
    then there exists a symmetric positive definite matrix $G = (g_{ij}) \in \R^{p\times p}$ %and real $\varrho_0,\dotsc,\varrho_p$ 
    such that for all $\bfw_0,\dotsc,\bfw_p\in\R^N$
    \begin{equation*}
        \Big\la \sum_{i=0}^p \delta_i \bfw_{p-i} , \sum_{i=0}^p \mu_i \bfw_{p-i}  \Big\ra \ge \sum_{i,j=1}^p g_{ij} \la \bfw_i , \bfw_j \ra - \sum_{i,j=1}^p g_{ij} \la \bfw_{i-1} , \bfw_{j-1} \ra .%+ \Big\| \sum_{i=0}^p \varrho_i _i \Big\|^2
      \end{equation*}
  %    holds.
\end{lemma}

In view of the following result, the choice $\mu(\zeta)=1-\eta\zeta$ together with the polynomial $\delta(\zeta)$ of the BDF methods will play an important role later on.
\begin{lemma}[Nevanlinna \& Odeh \cite{NevanlinnaOdeh}]
\label{lemma: NevanlinnaOdeh multiplier}
    If $p\leq5$, then there exists $0\leq\eta<1$ \st\ for $\delta(\zeta)=\sum_{\ell=1}^p \frac{1}{\ell}(1-\zeta)^\ell$,
    \begin{equation*}
        \textnormal{Re} \,\frac{\delta(\zeta)}{1-\eta\zeta} > 0, \qquad \textrm{for} \quad |\zeta|<1.
    \end{equation*}
    The smallest possible values of $\eta$ are found to be 
    $\eta= 0,  0,  0.0836, 0.2878,  0.8160$ 
    for $p=1,\dotsc,5$, respectively.
\end{lemma}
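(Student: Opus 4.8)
The plan is to prove the positivity statement by a maximum-principle reduction to the boundary circle, followed by an explicit verification of a real trigonometric inequality for each order $p\le 5$. The abstract framework is cheap; the arithmetic for $p=3,4,5$ is where the work lies.

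First I would observe that for $0\le\eta<1$ the multiplier $1-\eta\zeta$ has its only zero at $\zeta=1/\eta$, which lies strictly outside the closed unit disk (and for $\eta=0$ there is no zero at all). Hence the rational function $f(\zeta)=\delta(\zeta)/(1-\eta\zeta)$ is analytic on a neighbourhood of the closed unit disk, so $\mathrm{Re}\,f$ is harmonic there. Since $\delta$ and $1-\eta\zeta$ have real coefficients, $f(\overline\zeta)=\overline{f(\zeta)}$, so $\mathrm{Re}\,f$ is invariant under $\zeta\mapsto\overline\zeta$. By the minimum principle for harmonic functions it then suffices to establish $\mathrm{Re}\,f(\zeta)\ge 0$ on the upper half of the circle $|\zeta|=1$: the weak minimum principle gives $\mathrm{Re}\,f\ge 0$ throughout the closed disk, and since $f$ is manifestly non-constant, the strong minimum principle upgrades this to $\mathrm{Re}\,f(\zeta)>0$ at every interior point. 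Note that $\mathrm{Re}\,f(1)=0$ because $\delta(1)=0$, so the boundary minimum is attained precisely at $\zeta=1$, consistently with the open-disk inequality being strict.

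Next I would reduce the boundary condition to a real inequality. On $|\zeta|=1$ one has $\overline\zeta=1/\zeta$ and $|1-\eta\zeta|^2>0$, so
\begin{equation*}
  \mathrm{Re}\,f(\zeta) = \frac{\mathrm{Re}\bigl[\delta(\zeta)\,\overline{(1-\eta\zeta)}\bigr]}{|1-\eta\zeta|^2}
  = \frac{\mathrm{Re}\,\delta(\zeta) - \eta\,\mathrm{Re}\bigl[\overline{\zeta}\,\delta(\zeta)\bigr]}{|1-\eta\zeta|^2},
\end{equation*}
and it is enough that the numerator be nonnegative. Writing $\zeta=e^{i\phi}$ and using $1-e^{i\phi}=2\sin(\phi/2)\,e^{i(\phi-\pi)/2}$, the powers become $(1-\zeta)^\ell=(2\sin(\phi/2))^\ell e^{i\ell(\phi-\pi)/2}$, so
\begin{equation*}
  \delta(e^{i\phi}) = \sum_{\ell=1}^p \frac{(2\sin(\phi/2))^\ell}{\ell}\,e^{i\ell(\phi-\pi)/2}.
\end{equation*}
Taking real and imaginary parts turns the numerator into an explicit real trigonometric polynomial $P_p(\phi;\eta)$, whose nonnegativity on $[0,\pi]$ is exactly what must be verified.

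The remaining, and by far hardest, step is this verification. For $p=1$ and $p=2$ the choice $\eta=0$ works and $P_p$ collapses to a manifestly nonnegative expression: one computes $\mathrm{Re}\,\delta=1-\cos\phi\ge 0$ for $p=1$ and $\mathrm{Re}\,\delta=(1-\cos\phi)^2\ge 0$ for $p=2$, recovering the classical A-stability of these methods. For $p=3,4,5$ I would substitute a single real variable (for instance $s=\sin^2(\phi/2)$ or $c=\cos\phi$), clear the trigonometric factors, and reduce $P_p(\phi;\eta)\ge 0$ to a one-variable polynomial inequality on a bounded interval, settled either by a sum-of-squares decomposition or by locating the finitely many critical points and checking their signs. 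The listed thresholds $\eta=0.0836,\,0.2878,\,0.8160$ are precisely the smallest $\eta$ for which the minimum of $P_p(\cdot\,;\eta)$ over $\phi$ first becomes nonnegative — the values at which that minimum just touches zero — and pinning these down amounts to solving for the double root of the reduced polynomial as a function of $\eta$. I expect this last optimization, rather than the maximum-principle framework, to be the principal obstacle, and it is also where the contrast with $p=6$ (for which no admissible $\eta$ exists) would surface.
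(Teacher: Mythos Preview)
The paper does not prove this lemma at all: it is stated as a quotation from Nevanlinna and Odeh \cite{NevanlinnaOdeh} and used as a black box in the stability analysis of Section~8, so there is no proof in the paper against which to compare your attempt.

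That said, your sketch is a correct outline of how such a multiplier result is established. The reduction via the minimum principle for harmonic functions to a boundary inequality is the standard move, and your explicit check for $p=1,2$ (recovering $A$-stability with $\eta=0$) is right. For $p=3,4,5$ you correctly identify that the substance lies in verifying a one-parameter family of trigonometric-polynomial inequalities and locating the critical $\eta$; this is indeed how the original reference proceeds, by a case-by-case analysis. One minor point: your statement that the boundary minimum is attained \emph{precisely} at $\zeta=1$ is stronger than what you need and would itself require checking for each $p$ (at the threshold $\eta$ there may be a second boundary touching point, which is exactly what determines the optimal value), but this does not affect the validity of the positivity claim for the stated $\eta$.
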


\subsection{Error equations}
By using the same notations as in the previous sections for the nodal vectors of the exact positions $\xs^n\in\R^{3N}$ and of the  exact velocity $\vs^n\in\R^{3N}$, and for their defects $\dv^n$ and $\dx^n$, 
we obtain that they fulfil the following equations:
%\begin{equation*}
%	\begin{aligned}
%		%\delta_0 \bfM(\tildex^n) \bfv^n + \tau \bfA(\tildex^n)\bfv^n =&\ \tau \bfg(\tildex^n,t) - \sum_{j=1}^p \delta_j \bfM(\bfx^{n-j}) \bfv^{n-j}, \\
%		\frac{1}\tau \delta_0\bfM(\tildexs^{n})\vs^{n} +\frac{1}{\tau} \sum_{j=1}^p &\,\delta_j \bfM(\xs^{n-j})\vs^{n-j} + \ \bfA(\tildexs^n)\vs^n =\bfg(\tildex^n,t)+ \bfM(\xs^n) \dv^n,\\
%		%&= \bfg(\tildex^n,t) + \frac{1}{\tau} \sum_{j=1}^p \delta_j \big(\bfM(\tildex^n) - \bfM(\bfx^{n-j})\big) \bfv^{n-j}, \\
%		 \frac{1}{\tau} \sum_{j=0}^p \delta_j \xs^{n-j} =&\ \vs^n + \dx^n .
%	\end{aligned}
%%
%%    \begin{aligned}
%%        \bfM(\tildexs^n) \frac{1}{\tau} \sum_{j=0}^p \delta_j \vs^{n-j} + &\ \bfA(\tildexs^n)\vs^n \\
%%        =&\ \bfg(\tildexs^n,t) 
%%        + \frac{1}{\tau} \sum_{j=1}^p \delta_j \big(\bfM(\tildexs^n) - \bfM(\xs^{n-j})\big) \vs^{n-j} + \bfM(\xs^n) \dv^n, \\
%%        \frac{1}{\tau} \sum_{j=0}^p \delta_j \xs^{n-j} =&\ \vs^n + \dx^n  .
%%    \end{aligned}
%\end{equation*}
\begin{equation*}
	\begin{aligned}
		%\delta_0 \bfM(\tildex^n) \bfv^n + \tau \bfA(\tildex^n)\bfv^n =&\ \tau \bfg(\tildex^n,t) - \sum_{j=1}^p \delta_j \bfM(\bfx^{n-j}) \bfv^{n-j}, \\
		\frac{1}{\tau} \sum_{j=0}^p \delta_j \bfM(\tildexs^{n-j})\vs^{n-j} + \ \bfA(\tildexs^n)\vs^n &=\bfg(\tildex^n,t)+ \bfM(\xs^n) \dv^n,\\
		%&= \bfg(\tildex^n,t) + \frac{1}{\tau} \sum_{j=1}^p \delta_j \big(\bfM(\tildex^n) - \bfM(\bfx^{n-j})\big) \bfv^{n-j}, \\
		 \frac{1}{\tau} \sum_{j=0}^p \delta_j \xs^{n-j} &=\vs^n + \dx^n .
	\end{aligned}
%
%    \begin{aligned}
%        \bfM(\tildexs^n) \frac{1}{\tau} \sum_{j=0}^p \delta_j \vs^{n-j} + &\ \bfA(\tildexs^n)\vs^n \\
%        =&\ \bfg(\tildexs^n,t) 
%        + \frac{1}{\tau} \sum_{j=1}^p \delta_j \big(\bfM(\tildexs^n) - \bfM(\xs^{n-j})\big) \vs^{n-j} + \bfM(\xs^n) \dv^n, \\
%        \frac{1}{\tau} \sum_{j=0}^p \delta_j \xs^{n-j} =&\ \vs^n + \dx^n  .
%    \end{aligned}
\end{equation*}
By subtracting the above equations from \eqref{eq: BDF for dynamic}, we obtain the error equations for the surface nodes and velocity:
%\begin{equation*}
%    \begin{aligned}
%        &\ \hspace{-.5cm} \bfM(\xs^n) \frac{1}{\tau} \sum_{j=0}^p \delta_j \ev^{n-j} 
%        + \bfA(\xs^n)\ev^n \\
%        =&\ - \frac{1}\tau \delta_0 \big( \bfM(\tildexs^n) - \bfM(\xs^n) \big) \ev^n 
%           - \frac{1}\tau \delta_0\big( \bfM(\widetilde\bfx^n) - \bfM(\tildexs^n)\bigr)(\vs^n+\ev^n) \\
%           &\ - \frac{1}{\tau} \sum_{j=1}^p \,\delta_j \big( \bfM(\bfx^{n-j}) - \bfM(\xs^{n-j})\bigr)(\vs^{n-j} + \ev^{n-j})\\
%%        !!!!!!!!!!=&\ - \big( \bfM(\tildex^n) - \bfM(\tildexs^n) \big) \frac{1}{\tau} \sum_{j=0}^p \delta_j \ev^{n-j} 
%%        - \big( \bfM(\tildex^n) - \bfM(\tildexs^n) \big) \frac{1}{\tau} \sum_{j=0}^p \delta_j \vs^{n-j} \\
%        &\ - \big(\bfA(\tildexs^n) - \bfA(\xs^n)\big) \ev^n - \big(\bfA(\widetilde\bfx^n) - \bfA(\tildexs^n)\big) (\vs^n+\ev^n) \\[1mm]
%        &\ + \bfg(\tildex^n,t_n) - \bfg(\tildexs^n,t_n) - \bfM(\xs^n) \dv^n \\[2mm]
%%        &\ + \frac{1}{\tau} \sum_{j=1}^p \delta_j  \big(\bfM(\tildexs^n) - \bfM(\xs^{n-j})\big) \ev^{n-j} \\
%%        &\ + \frac{1}{\tau} \sum_{j=1}^p \delta_j \big(\bfM(\tildex^n) - \bfM(\tildexs^n)  - (\bfM(\bfx^{n-j}) - \bfM(\xs^{n-j}))\big) \ev^{n-j} \\
%%        &\ + \frac{1}{\tau} \sum_{j=1}^p \delta_j \big(\bfM(\tildex^n) - \bfM(\tildexs^n)  - (\bfM(\bfx^{n-j}) - \bfM(\xs^{n-j}))\big) \vs^{n-j} \\
%        &\ \hspace{-.5cm} \frac{1}{\tau} \sum_{j=0}^p \delta_j \ex^{n-j} = \ev^n - \dx^n  .
%    \end{aligned}
%\end{equation*}
\begin{equation}
\label{eq:exact solution for dyn velocity law}
    \begin{aligned}
        &\ \hspace{-.5cm} \bfM(\xs^n) \frac{1}{\tau} \sum_{j=0}^p \delta_j \ev^{n-j} 
        + \bfA(\xs^n)\ev^n \\
        =&\ %- \frac{1}\tau \delta_0 \big( \bfM(\tildexs^n) - \bfM(\xs^n) \big) \ev^n 
%           - \frac{1}\tau \delta_0\big( \bfM(\widetilde\bfx^n) - \bfM(\tildexs^n)\bigr)(\vs^n+\ev^n) \\
           %&\ 
           -  \frac{1}{\tau}\sum_{j=1}^p \delta_j \big( \bfM(\xs^{n-j}) - \bfM(\xs^{n})\bigr) \ev^{n-j}
           - \frac{1}{\tau} \sum_{j=0}^p \delta_j \big( \bfM(\tildexs^{n-j}) - \bfM(\xs^{n-j})\bigr) \ev^{n-j}\\
          &\ - \frac{1}{\tau} \sum_{j=0}^p \delta_j \big( \bfM(\tildex^{n-j}) - \bfM(\tildexs^{n-j})\bigr)(\vs^{n-j} + \ev^{n-j})\\
        %  &\ -  \frac{1}{\tau}\sum_{j=1}^p \,\delta_j \big( \bfM(\xs^{n-j}) - \bfM(\xs^{n})\bigr) \ev^{n-j}\\
%        !!!!!!!!!!=&\ - \big( \bfM(\tildex^n) - \bfM(\tildexs^n) \big) \frac{1}{\tau} \sum_{j=0}^p \delta_j \ev^{n-j} 
%        - \big( \bfM(\tildex^n) - \bfM(\tildexs^n) \big) \frac{1}{\tau} \sum_{j=0}^p \delta_j \vs^{n-j} \\
        &\ - \big(\bfA(\tildexs^n) - \bfA(\xs^n)\big) \ev^n - \big(\bfA(\widetilde\bfx^n) - \bfA(\tildexs^n)\big) (\vs^n+\ev^n) \\[1mm]
        &\ + \bfg(\tildex^n,t_n) - \bfg(\tildexs^n,t_n) - \bfM(\xs^n) \dv^n \\[2mm]
%        &\ + \frac{1}{\tau} \sum_{j=1}^p \delta_j  \big(\bfM(\tildexs^n) - \bfM(\xs^{n-j})\big) \ev^{n-j} \\
%        &\ + \frac{1}{\tau} \sum_{j=1}^p \delta_j \big(\bfM(\tildex^n) - \bfM(\tildexs^n)  - (\bfM(\bfx^{n-j}) - \bfM(\xs^{n-j}))\big) \ev^{n-j} \\
%        &\ + \frac{1}{\tau} \sum_{j=1}^p \delta_j \big(\bfM(\tildex^n) - \bfM(\tildexs^n)  - (\bfM(\bfx^{n-j}) - \bfM(\xs^{n-j}))\big) \vs^{n-j} \\
        &\ \hspace{-.5cm} \frac{1}{\tau} \sum_{j=0}^p \delta_j \ex^{n-j} = \ev^n - \dx^n  .
    \end{aligned}
\end{equation}

\subsection{Stability}

We then have the following stability result.

\begin{proposition}
\label{proposition: stability - dynamic velocity law}
    Under the smallness assumptions of Proposition~\ref{propostion: stability - regularised velocity law} for the defects and the errors in the initial values, 
    the following error bound holds for BDF methods of order $p\le 5$ for $n \tau \leq T$:
    \begin{equation}
    \label{eq: error estimate - dynamic}
	    \begin{aligned}
		    & \|\ex^{n}\|_{\bfK(\xs^{n})}^2 
		    + \|\ev^n\|_{\bfM(\xs^n)}^2 
		    + \tau \sum_{j=p}^n \|\ev^j\|_{\bfA(\xs^j)}^2 
		    \\
		    &\leq  C \tau \sum_{j=p}^n \Big( \|\dx^j\|_{\bfK(\xs^j)}^2 + \|\dv^j\|_{\star,\xs^j}^2 \Big) + c \|\dv^n\|_{\star,\xs^n}^2 
		    \\
		    &\quad + C\sum_{i=0}^{p-1}\Big( \|\ex^i\|_{\bfK(\xs^i)}^2 + \|\ev^i\|_{\bfM(\xs^i)}^2 \Big).		    
	    \end{aligned}
    \end{equation}
    The constant $C$ is independent of $h$, $\tau$ and $n$, but depends on  $T$.
\end{proposition}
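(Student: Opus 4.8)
The plan is to follow the structure of the proof of Proposition~\ref{propostion: stability - regularised velocity law}, but to replace the non-dissipative test function $\ev^n$ by the energy-estimate technique used for BDF discretizations of linear parabolic equations on prescribed evolving surfaces in \cite{LubichMansourVenkataraman_bdsurf}: one combines Dahlquist's $G$-stability (Lemma~\ref{lemma: Dahlquist}) with the Nevanlinna--Odeh multiplier $\mu(\zeta)=1-\eta\zeta$ (Lemma~\ref{lemma: NevanlinnaOdeh multiplier}, which is where the restriction to $p\le 5$ enters), the surface-comparison results of Section~\ref{section: aux}, and the geometric estimates of \cite{soldriven}. As in the proof of Proposition~\ref{propostion: stability - regularised velocity law} we argue by induction over the time level $n$: assuming \eqref{eq: error estimate - dynamic} for the levels $p,\dots,n-1$, an inverse inequality together with the norm equivalences \eqref{eq: extrap norm equivalence} and Lemma~\ref{lemma: matrix derivatives} shows, exactly as in \eqref{eq: assumed bounds}, that the $W^{1,\infty}$ norm of the extrapolated position error $\widetilde e_x^m$ is bounded by $c\vartheta$ for the relevant $m$, so that the estimates of Lemma~\ref{lemma: technicals} become available.

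The core of the proof is a combined energy estimate obtained by testing the first error equation in \eqref{eq:exact solution for dyn velocity law} with the multiplied velocity error $\ev^n-\eta\ev^{n-1}$ and the second one with the multiplied position error $\ex^n-\eta\ex^{n-1}$. In the first equation, the leading term $\bfM(\xs^n)\,\tfrac1\tau\sum_{j=0}^p\delta_j\ev^{n-j}$ is handled by Lemma~\ref{lemma: Dahlquist} applied in the inner product $\langle\,\cdot\,,\,\cdot\,\rangle_{\bfM(\xs^n)}$: it is bounded below by $\tfrac1\tau\bigl(\|\boldsymbol{E}^n\|_{G,\bfM(\xs^n)}^2-\|\boldsymbol{E}^{n-1}\|_{G,\bfM(\xs^n)}^2\bigr)$ with the $p$-tuple $\boldsymbol{E}^n=(\ev^n,\dots,\ev^{n-p+1})$ and $\|\boldsymbol{E}\|_{G,\bfM}^2=\sum_{i,j}g_{ij}\langle\boldsymbol{E}_i,\boldsymbol{E}_j\rangle_{\bfM}$; the elliptic term $\bfA(\xs^n)\ev^n$, after summation over $n$ and the elementary bound $\eta|\langle\bfA\ev^n,\ev^{n-1}\rangle|\le\tfrac\eta2\|\ev^n\|_{\bfA}^2+\tfrac\eta2\|\ev^{n-1}\|_{\bfA}^2$, produces a dissipative contribution $\ge(1-\eta)\,\tau\sum_j\|\ev^j\|_{\bfA(\xs^j)}^2$ up to boundary terms. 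Treating the second equation in the same way (in the $\bfK(\xs^n)$ inner product) yields a telescoping energy for the $p$-tuple $\boldsymbol{X}^n=(\ex^n,\dots,\ex^{n-p+1})$; splitting $\langle\ev^n,\ex^n-\eta\ex^{n-1}\rangle_{\bfK(\xs^n)}$ into its $\bfM$- and $\bfA$-parts, the $\bfA$-part is absorbed into the dissipation by Young's inequality and the $\bfM$-part is kept for the Gronwall step. Adding the two estimates with a suitable weight gives a combined energy $\mathcal{E}^n=\|\boldsymbol{E}^n\|_{G,\bfM(\xs^n)}^2+\kappa\|\boldsymbol{X}^n\|_{G,\bfK(\xs^n)}^2$ that controls $\|\ev^n\|_{\bfM(\xs^n)}^2$ and $\|\ex^n\|_{\bfK(\xs^n)}^2$; using the position energy rather than merely inverting $\delta(\zeta)$ is what makes the position-error contributions appearing on the right-hand side \emph{balanced} instead of producing a $T$-dependent factor.

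The remaining terms of \eqref{eq:exact solution for dyn velocity law} are estimated as follows. The $\bfM$- and $\bfA$-differences on exact nodes, $\bfM(\xs^{n-j})-\bfM(\xs^n)$ and $\bfA(\widetilde\bfx_\ast^n)-\bfA(\xs^n)$, are $O(\tau)$ and $O(\tau^p)$ by Lemma~\ref{lemma: matrix derivatives} and Lemma~\ref{lemma: matrix identity for extrapolation}; the difference between the discrete and the interpolated surfaces in $\bfA(\widetilde\bfx^n)-\bfA(\widetilde\bfx_\ast^n)$ applied to $\vs^n+\ev^n$ is handled by Lemma~\ref{lemma: technicals}(i) and the bootstrap bound, the part multiplying $\ev^n$ being absorbed into the dissipation (because $D_{\Gamma_h}\widetilde e_x^n$ is bootstrap-small) and the part multiplying the bounded $\vs^n$ producing $c\|\ev^n\|_{\bfA}\|\widetilde e_x^n\|_{\bfK}$, i.e.\ past position errors; the driving-term difference $\bfg(\widetilde\bfx^n,t_n)-\bfg(\widetilde\bfx_\ast^n,t_n)$ is treated as in part~(a)(v) of the proof of Proposition~\ref{propostion: stability - regularised velocity law}; and the defect terms are bounded by Cauchy--Schwarz in the norms $\|\cdot\|_{\bfK(\xs^n)}$ and $\|\cdot\|_{\star,\xs^n}$. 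Summing over $n=p,\dots,N$, telescoping the $G$-energies, multiplying by $\tau$, using Lemma~\ref{lemma: matrix derivatives} to pass between the norms at neighbouring time levels at the price of a factor $1+O(\tau)$, and invoking a discrete Gronwall inequality (the $n=N$ term being absorbed for $\tau$ small) yields \eqref{eq: error estimate - dynamic}; exactly as in the proof of Proposition~\ref{propostion: stability - regularised velocity law}, the constant is independent of $\vartheta$, which closes the induction.

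The hard part will be the control of those right-hand-side terms of \eqref{eq:exact solution for dyn velocity law} that carry simultaneously a factor $\tfrac1\tau$ and a BDF sum $\sum_{j=0}^p\delta_j$ of mass-matrix differences involving the position error, above all $\tfrac1\tau\sum_{j=0}^p\delta_j\bigl(\bfM(\widetilde\bfx^{n-j})-\bfM(\widetilde\bfx_\ast^{n-j})\bigr)(\vs^{n-j}+\ev^{n-j})$. Here the matrix difference is, by Lemma~\ref{lemma: technicals}(i) and the bootstrap, only $O(\|\widetilde e_x^{n-j}\|_{W^{1,\infty}})=O(\vartheta)$ and \emph{not} $O(\tau)$, so that the factor $\tfrac1\tau$ is not cancelled and a naive bound diverges as $\tau\to 0$. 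I expect this to require a discrete summation by parts in time that uses $\sum_{j=0}^p\delta_j=\delta(1)=0$ to re-express such a term as a difference quotient acting on slowly varying quantities, together with the fact---derivable from the induction hypothesis, the zero-stability of the BDF scheme (in particular the summability of the differences $\mu_{m+1}-\mu_{m}$ of the coefficients of $1/\delta(\zeta)$) and the stepsize restriction $\tau^p\le c_0 h$ with $k\ge 2$---that the increments $\bfx^l-\bfx^{l-1}$, hence also $\widetilde e_x^{n-j}-\widetilde e_x^{n}$, are $O(\tau)$ in the relevant norms. This is the ingredient that goes beyond \cite{LubichMansourVenkataraman_bdsurf}, where the mass matrix varies smoothly because the surface is prescribed, and beyond Proposition~\ref{propostion: stability - regularised velocity law}, whose velocity law involves no material time derivative and hence no such difference-quotient terms.
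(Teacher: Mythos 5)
Your core argument coincides with the paper's: you test the first error equation in \eqref{eq:exact solution for dyn velocity law} with the Nevanlinna--Odeh multiplied difference $\ev^n-\eta\ev^{n-1}$, bound the BDF mass term from below by a telescoping $G$-energy (Lemmas~\ref{lemma: Dahlquist} and~\ref{lemma: NevanlinnaOdeh multiplier}, which is where $p\le 5$ enters), extract the dissipation from the $\bfA(\xs^n)$ term, pass between norms at neighbouring time levels at the price of $1+c\tau$ (Lemmas~\ref{lemma: matrix derivatives} and~\ref{lemma: matrix identity for extrapolation}), work under the bootstrap $W^{1,\infty}$ bound on the extrapolated position error so that Lemma~\ref{lemma: technicals} applies, and conclude by summation and a discrete Gronwall argument. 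One structural difference: for the position-error equation you build a second multiplied $G$-energy in the $\bfK(\xs^n)$ inner product and couple it to the velocity energy, whereas the paper simply reuses part~(b) of the proof of Proposition~\ref{propostion: stability - regularised velocity law}, i.e.\ it inverts $\delta(\zeta)$ by zero-stability to bound $\|\ex^n\|_{\bfK(\xs^n)}^2$ by $\tau$-weighted sums of $\|\ev^j\|^2$ and the defects. Both routes are viable; the paper's is lighter, and your motivation that the combined energy avoids a $T$-dependent factor is moot, since the constant is allowed to depend on $T$.

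Concerning the terms $\tfrac1\tau\sum_{j=0}^p\delta_j\bigl(\bfM(\widetilde\bfx^{n-j})-\bfM(\widetilde\bfx_\ast^{n-j})\bigr)(\vs^{n-j}+\ev^{n-j})$ that you single out: you are right that a termwise bound fails, since by Lemma~\ref{lemma: technicals} this matrix difference is controlled by the position error and not by $\tau$, so the factor $1/\tau$ is not cancelled. The paper does not elaborate on this point; its proof disposes of the entire right-hand side $\rho^n$ with the remark that it ``can be estimated by the same arguments as in part~(a) of the proof of Proposition~\ref{propostion: stability - regularised velocity law}'', which does not literally address terms carrying $1/\tau$, so here your discussion is more explicit than the paper's. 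However, your proposed remedy is only a plan, and as stated it has a soft spot: after exploiting $\delta(1)=0$, the one-step increments that appear include not only $\widetilde\bfx^{m}-\widetilde\bfx^{m-1}$, $\widetilde\bfx_\ast^{m}-\widetilde\bfx_\ast^{m-1}$ and $\vs^m-\vs^{m-1}$ (which are indeed $O(\tau)$ in the relevant norms) but also $\ev^m-\ev^{m-1}$, for which no $O(\tau)$ bound is available from the induction hypothesis; one has to split off the piece $\bigl(\bfM(\widetilde\bfx^{n})-\bfM(\widetilde\bfx_\ast^{n})\bigr)\tfrac1\tau\sum_{j}\delta_j\ev^{n-j}$ and treat it as a small relative perturbation of the leading term, or perform the summation by parts over the time index $n$, rather than bound velocity-error increments directly. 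So the overall strategy matches the paper, but this step of your argument is not yet closed (nor is it detailed in the paper).
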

\begin{proof} 
We test the first error equation in \eqref{eq:exact solution for dyn velocity law} with $\ev^n-\eta \ev^{n-1}$ to obtain 
$$
(\ev^n-\eta \ev^{n-1})^T \bfM(\xs^n) \frac{1}{\tau} \sum_{j=0}^p \delta_j \ev^{n-j} + (\ev^n-\eta \ev^{n-1})^T\bfA(\xs^n)\ev^n =\rho^n,
$$
where the right-hand term $\rho^n$ can be estimated by the same arguments as in part (a) of the proof of Proposition~\ref{propostion: stability - regularised velocity law}. On the left-hand side we have a term containing the stiffness matrix $\bfA(\xs^n)$, which is estimated from below as follows using Lemmas~\ref{lemma: matrix derivatives} and~\ref{lemma: matrix identity for extrapolation}:
\begin{align*}
(\ev^n-&\eta \ev^{n-1})^T\bfA(\xs^n)\ev^n \ge  \| \ev^n \|_{\bfA(\xs^n)}^2 - \eta \| \ev^{n-1} \|_{\bfA(\xs^n)} \| \ev^n \|_{\bfA(\xs^n)}
\\
&\ge  \| \ev^n \|_{\bfA(\xs^n)}^2 - \eta (1+c\tau)\| \ev^{n-1} \|_{\bfA(\xs^{n-1})} \| \ev^n \|_{\bfA(\xs^n)}
\\
%&\ge (1-c\tau^p) \| \ev^n \|_{\bfA(\xs^n)}^2 - (1+c\tau^p)\eta \| \ev^{n-1} \|_{\bfA(\xs^n)} \| \ev^n \|_{\bfA(\xs^n)}
%\\
%&\ge (1-c\tau^p) \| \ev^n \|_{\bfA(\xs^n)}^2 - (1+c\tau)\eta \| \ev^{n-1} \|_{\bfA(\xs^{n-1})} \| \ev^n \|_{\bfA(\xs^n)}
%\\
&\ge (1-\tfrac12\eta-c\tau) \| \ev^n \|_{\bfA(\xs^n)}^2 - (\tfrac12\eta+c\tau) \| \ev^{n-1} \|_{\bfA(\xs^{n-1})}^2.
\end{align*}
The other term on the left-hand side, which contains the mass matrix $\bfM(\xs^n)$, is estimated from below using 
Lemmas~\ref{lemma: Dahlquist} and~\ref{lemma: NevanlinnaOdeh multiplier}. Let us introduce
\begin{equation*}
	\bfE_{\bfv}^n = \big(\ev^{n-p+1}, \dotsc, \ev^{n-1}, \ev^n \big) 
\end{equation*}
and the norm 
\begin{equation*}
	|\bfE_{\bfv}^n|_{G,\xs^n}^2 = \sum_{i,j=1}^p g_{ij} (\ev^{n-p+i})^T \bfM(\xs^n) \ev^{n-p+j} ,
\end{equation*}
which satisfies the norm equivalence relation
\begin{equation}
\label{eq: G norm equivalence}
	\lambda_{\min} \sum_{i=1}^p \|\ev^{n-p+i}\|_{\bfM(\xs^n)}^2 
	\leq |\bfE_{\bfx}^n|_{G,\xs^n}^2
	\leq \lambda_{\max} \sum_{i=1}^p \|\ev^{n-p+i}\|_{\bfM(\xs^n)}^2 ,
\end{equation}
where $\lambda_{\min}$ and $\lambda_{\max}$ are the smallest and largest eigenvalue 
of the symmetric positive definite matrix $G=(g_{ij})$ of Lemma~\ref{lemma: Dahlquist}.
Hence we obtain from Lemmas~\ref{lemma: Dahlquist} and~\ref{lemma: NevanlinnaOdeh multiplier}
$$
(\ev^n-\eta \ev^{n-1})^T \bfM(\xs^n)  \sum_{j=0}^p \delta_j \ev^{n-j} \ge
|\bfE_{\bfv}^n|_{G,\xs^n}^2 - |\bfE_{\bfv}^{n-1}|_{G,\xs^{n}}^2\,,
$$
where we note that by Lemma~\ref{lemma: matrix derivatives},
$$
|\bfE_{\bfv}^{n-1}|_{G,\xs^{n}}^2 \le (1+c\tau)|\bfE_{\bfv}^{n-1}|_{G,\xs^{n-1}}^2,
$$
so that altogether we have
\begin{align*}
&|\bfE_{\bfv}^n|_{G,\xs^n}^2 - (1+c\tau)|\bfE_{\bfv}^{n-1}|_{G,\xs^{n-1}}^2 \\
&+  \tau(1-\tfrac12\eta-c\tau) \| \ev^n \|_{\bfA(\xs^n)}^2 - \tau (\tfrac12\eta+c\tau) \| \ev^{n-1} \|_{\bfA(\xs^{n-1})}^2 \le \tau\rho^n.
\end{align*}
Using these inequalities from 1 to $n$ yields for sufficiently small $\tau$, with a positive constant $\gamma$,
$$
|\bfE_{\bfv}^n|_{G,\xs^n}^2  + \gamma\tau \sum_{j=0}^n e^{c(n-j)\tau}  \| \ev^j \|_{\bfA(\xs^j)}^2
\le e^{cn\tau} |\bfE_{\bfv}^{0}|_{G,\xs^{0}}^2 + \tau  \sum_{j=0}^n e^{c(n-j)\tau} \rho^j. 
$$
Using this bound together with estimates for $\rho^j$ and $\ex^j$ obtained in the same way as in the proof of Proposition~\ref{propostion: stability - regularised velocity law} then yields the stated result.
\qed
\end{proof}

Together with bounds for the consistency errors $\dv^n$ and $\dx^n$, which are proven in the same way as in Section~\ref{section: consistency}, the stability bounds of Proposition~\ref{proposition: stability - dynamic velocity law} then yield the $O(h^k+\tau^p)$ error bounds of Theorem~\ref{theorem: main-dyn}.

\section{Coupling with diffusion on the surface}
\label{section: coupled}

Let us now turn to the parabolic surface PDE coupled with the regularised velocity law. We consider the following coupled problem of an evolving surface driven by diffusion on the surface, for which the ESFEM semi-discretization was studied in \cite{soldriven}:
\begin{equation}
\label{eq: coupled problem}
	\begin{aligned}
		\mat u + u \nb_{\Ga(X)} \cdot v - \laplace_{\Ga(X)} u   =&\  f(u, \nabla_{\Ga(X)} u), \\
		v - \alpha \laplace_{\Ga(X)} v + \beta H_{\Ga(X)}\nu_{\Ga(X)}=&\ g(u, \nb_{\Ga(X)} u ) \nu_{\Ga(X)} \\
		\pa_t X(q,t) =&\ v(X(q,t),t),
	\end{aligned}
\end{equation}
with $\alpha>0$ and $\beta\ge 0$.
The weak formulation and the ESFEM spatial semi-discretization, also in its matrix--vector formulation, are given in Section~2 of \cite{soldriven}. The finally obtained coupled system of differential-algebraic equations for the vectors of nodal values $\bfu(t) \in \R^N$, $\bfv(t) \in \R^{3N}$, and $\bfx(t) \in \R^{3N}$ reads, with the matrices of Section~\ref{subsection:DAE}:
\begin{equation}
\label{eq: DAE form - coupled}
	\begin{aligned}
		\diff \Big(\bfM(\bfx)\bfu\Big) + \bfA(\bfx)\bfu =&\   \bff(\bfx,\bfu), \\
		\bfK(\bfx) \bfv +\beta \bfA(\bfx)\bfx=&\ \bfg(\bfx,\bfu),\\
		\dot \bfx =&\ \bfv.
	\end{aligned}
\end{equation}
The right-hand side vectors are defined slightly differently from Section~\ref{subsection:DAE}. They are given by
\begin{equation*}
	\begin{aligned}
		\bff(\bfx,\bfu)\vert_j &= \int_{\Ga_h[\bfx]} f(u_h,\nbgh u_h) \, \phi_j[\bfx],
		\\
		\bfg(\bfx,\bfu)\vert_{3(j-1)+\ell} &= \int_{\Ga_h[\bfx]} g(u_h,\nbgh u_h) \,\bigl(\nu_{\Ga_h[\bfx]}\bigr)_\ell \, \phi_j[\bfx],
	\end{aligned}
\end{equation*}
for $j = 1,  \dotsc, N,$ and $\ell=1,2,3$.
 
The linearly implicit BDF discretization then reads as follows: with the extrapolated position vectors $\widetilde \bfx^n$ defined by \eqref{eq: extrapolation of u def},
%%%%%%%%%%%%%%%%%%%%%%%%%%%%%%%%%%%%%%%%%%%%%%%%%
\newcommand{\tildeu}{\widetilde\bfu}
\newcommand{\tildeus}{\widetilde{\bfu}_{\ast}}
%%%%%%%%%%%%%%%%%%%%%%%%%%%%%%%%%%%%%%%%%%%%%%%%%
%\begin{equation}
%\label{eq: BDF for coupled}
%	\begin{aligned}
%			\frac{1}\tau \delta_0\bfM(\tildex^{n})\bfu^{n} +\frac{1}{\tau} \sum_{j=1}^p \delta_j \bfM(\bfx^{n-j})\bfu^{n-j} + &\ \bfA(\tildex^n)\bfu^n = \bff(\tildex^n,\tildeu^n) , \\
%		\bfK(\tildex^n) \bfv^n +\beta \bfA(\tildex^n)\bfx^n=&\ \bfg(\tildex^n,\tildeu^n), \\
%		\frac{1}{\tau} \sum_{j=0}^p \delta_j \bfx^{n-j} =&\ \bfv^n .
%	\end{aligned}
%\end{equation}
\begin{equation}
\label{eq: BDF for coupled}
	\begin{aligned}
			\frac{1}{\tau} \sum_{j=0}^p \delta_j \bfM(\tildex^{n-j})\bfu^{n-j} +  \bfA(\tildex^n)\bfu^n &= \bff(\tildex^n,\tildeu^n) , \\
		\bfK(\tildex^n) \bfv^n +\beta \bfA(\tildex^n)\bfx^n &= \bfg(\tildex^n,\tildeu^n), \\
		\frac{1}{\tau} \sum_{j=0}^p \delta_j \bfx^{n-j} &= \bfv^n .
	\end{aligned}
\end{equation}
%The error equations can be derived exactly as before.

Full discretizations using BDF methods of parabolic PDEs on an evolving surface with a {\it given} velocity have been studied in \cite{LubichMansourVenkataraman_bdsurf}. The combination of the proofs of Lemma~4.1 and Theorem~5.1 of \cite{LubichMansourVenkataraman_bdsurf} with the error analysis of the ESFEM semi-discretization in \cite{soldriven} and 
with the proof of Theorem~\ref{theorem: main} in the present paper yields the following convergence theorem. We omit the details of the proof.

\begin{theorem}
\label{theorem: main-coupled}
    Consider the ESFEM / BDF linearly implicit full discretization \eqref{eq: BDF for coupled} of the coupled surface-evolution equation \eqref{eq: coupled problem}, using finite elements of polynomial degree~$k\ge 2$ and BDF methods of order $p\le 5$.
    We assume quasi-uniform admissible triangulations of the initial surface and initial values chosen by finite element interpolation of the initial data for $X$.
    Suppose that the problem admits an exact solution $u,X,v$ that is sufficiently smooth (say, of class  $C([0,T],H^{k+1})\cap C^{p+1}([0,T],W^{1,\infty})$) on the time interval $0\le t \le T$,  and that the flow map $X(\cdot,t):\Gamma_0\to \Gamma(t)\subset\R^3$ is non-degenerate for $0\le t \le T$, so that $\Gamma(t)$ is a regular surface. Suppose further that the starting values are sufficiently accurate.
%  $$
%  \| (X_h^i)^L - X(\cdot,i\tau) \|_{H^1(\Ga^0)^3} \le C_0  (h^k+\tau^{p}), \quad\ i=0,1,\dots,p-1.
%  $$
    Then, there exist $h_0 >0$, $\tau_0>0$ and $c_0>0$ such that for all mesh widths $h \leq h_0$ and step sizes $\tau\le\tau_0$ satisfying the mild stepsize restriction
    $
    \tau^p \le c_0 h,
    $
    the following error bounds hold over the exact surface $\Ga(t_n)=\Ga(X(\cdot,t_n))$
uniformly  for $0\le t_n=n\tau \le T$:
%    \begin{equation*}
%   %     \begin{aligned}
%            \bigg(\|u_h^L(\cdot,t) - u(\cdot,t)\|_{L^2(\Ga(t))}^2 + \int_0^t \|u_h^L(\cdot,s) - u(\cdot,s)\|_{H^1(\Ga(s))}^2 \,\d s \\
%            \bigg)^{1/2} \leq C h^k
%  %      \end{aligned}
%    \end{equation*}
%    and
    \begin{equation*}
 	\begin{aligned}
	 \|(u_h^n)^{L} - u(\cdot,t_n)\|_{L^2(\Ga(t_n))^3} +   \biggl( \sum_{j=p}^n \| (u_h^j)^{L} - u(\cdot,t_j)&\|_{H^1(\Ga(t_j))^3}^2 \biggr)^{1/2}\\
	 &\leq C(h^k+\tau^p),\\
	 		 \|(v_h^n)^{L} - v(\cdot,t_n)\|_{L^2(\Ga(t_n))^3} +   \biggl( \sum_{j=p}^n \| (v_h^j)^{L} - v(\cdot,t_j)&\|_{H^1(\Ga(t_j))^3}^2 \biggr)^{1/2}\\
	 &\leq C(h^k+\tau^p),\\[1mm]
   		\|(x_h^n)^{L} - \mathrm{id}_{\Gamma(t_n)}\|_{H^1(\Ga(t_n))^3} &\leq C(h^k+\tau^p).\\
	%	   \|(v_h^n)^{L} - v(\cdot,t_n)\|_{H^1(\Ga(t_n))^3}  &\leq C(h^k+\tau^p).
 	\end{aligned}
    \end{equation*}
    The constant $C$ is independent of  $h$ and $\tau$ and $n$ with $n\tau\le T$,  but depends on bounds of higher derivatives of the solution $(u,v,X)$,  and on the length $T$ of the time interval.
\end{theorem}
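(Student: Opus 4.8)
The plan is to reduce the coupled problem to the combination of three stability/consistency mechanisms that have already been developed: the BDF energy analysis for parabolic PDEs on evolving surfaces from \cite{LubichMansourVenkataraman_bdsurf} (for the $u$-equation), the semi-discrete error analysis of the coupled system in \cite{soldriven} (for the nonlinearities $\bff,\bfg$ and the interpolation defects), and the stability and consistency arguments of Sections~\ref{section: proof of stability}--\ref{section: proof completed} above (for the velocity law and the surface nodes). As there, everything is carried out in the matrix--vector formulation, using only the surface-comparison estimates of Lemmas~\ref{lemma: technicals}, \ref{lemma: matrix derivatives} and~\ref{lemma: matrix identity for extrapolation}, never geometric arguments. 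First I would set up the error equations: introduce the nodal vectors $\xs^n,\vs^n,\us^n$ of the interpolated exact solution at $t_n$, define the defects $\dx^n,\dv^n,\du^n$ by inserting these vectors into \eqref{eq: BDF for coupled}, and subtract. This produces a coupled system for $\eu^n=\bfu^n-\us^n$, $\ev^n=\bfv^n-\vs^n$, $\ex^n=\bfx^n-\xs^n$: the $\eu$-equation is a linearly implicit BDF discretization of a parabolic equation whose right-hand side collects the matrix differences $\bfM(\tildex^{n-j})-\bfM(\tildexs^{n-j})$, $\bfM(\xs^{n-j})-\bfM(\xs^n)$, $\bfA(\tildex^n)-\bfA(\xs^n)$ (acting on $\us^{n-j}+\eu^{n-j}$), the nonlinearity difference $\bff(\tildex^n,\tildeu^n)-\bff(\tildexs^n,\tildeus^n)$ and the defect term $\bfM(\xs^n)\du^n$; the $\ev$-equation is the elliptic-type equation \eqref{eq: error equations for BDF} but with $\bfg$ now depending on $\bfu$ and $\nbgh\bfu$; and the $\ex$-equation is the discrete ODE $\tfrac1\tau\sum_j\delta_j\ex^{n-j}=\ev^n-\dx^n$.

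The stability argument then proceeds by induction on the time level, assuming the asserted estimate up to level $n-1$. As in \eqref{eq: assumed bounds}, an inverse inequality together with the norm equivalences of Lemmas~\ref{lemma: technicals}, \ref{lemma: matrix derivatives}, \ref{lemma: matrix identity for extrapolation} and the stepsize restriction $\tau^p\le c_0h$ yields $\|\nbgh\widetilde e_x^n\|_{L^\infty}\le c\vartheta$ and $\|\widetilde e_u^n\|_{W^{1,\infty}}\le c\vartheta$; for small $\vartheta$ this makes Lemma~\ref{lemma: technicals} and the local Lipschitz bounds for $\bff,\bfg$ available. Test the $\eu$-equation with $\eu^n-\eta\,\eu^{n-1}$ for the Nevanlinna--Odeh multiplier $\eta$ of Lemma~\ref{lemma: NevanlinnaOdeh multiplier} (admissible for $p\le5$) and apply Dahlquist's $G$-stability (Lemma~\ref{lemma: Dahlquist}) exactly as in the proof of Proposition~\ref{proposition: stability - dynamic velocity law}, to extract $|\bfE_\bfu^n|_{G,\xs^n}^2-|\bfE_\bfu^{n-1}|_{G,\xs^n}^2$ plus the positive term $\tau(1-\tfrac12\eta-c\tau)\|\eu^n\|_{\bfA(\xs^n)}^2$, absorbing the surface-perturbation and nonlinearity contributions via Young's inequality into a small multiple of $\|\eu^n\|_{\bfA(\xs^n)}^2$ plus $c\|\eu^n\|_{\bfM(\xs^n)}^2$ and $c\sum_j\|\ex^{n-j}\|_{\bfK(\xs^n)}^2$. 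In parallel I would estimate $\ev^n$ as in part~(a) of the proof of Proposition~\ref{propostion: stability - regularised velocity law}, where the new $\bfg$-difference contributes a term $c\|\eu^n\|_{H^1}^2$ through the dependence of $g$ on $(u,\nabla u)$, and bound $\ex^n$ from the discrete ODE via zero-stability of the BDF method ($p\le6$) as in part~(b). Inserting the $\ev$- and $\ex$-bounds into the $\eu$-energy inequality, summing from $p$ to $n$ and using a discrete Gronwall inequality then gives the combined bound
\begin{equation*}
\begin{aligned}
\|\ex^n\|_{\bfK(\xs^n)}^2 &+\|\eu^n\|_{\bfM(\xs^n)}^2+\|\ev^n\|_{\bfM(\xs^n)}^2+\tau\sum_{j=p}^n\bigl(\|\eu^j\|_{\bfA(\xs^j)}^2+\|\ev^j\|_{\bfA(\xs^j)}^2\bigr)\\
&\le C\tau\sum_{j=p}^n\bigl(\|\dx^j\|_{\bfK(\xs^j)}^2+\|\dv^j\|_{\star,\xs^j}^2+\|\du^j\|_{\star,\xs^j}^2\bigr)+C\sum_{i=0}^{p-1}\bigl(\|\ex^i\|_{\bfK(\xs^i)}^2+\|\eu^i\|_{\bfM(\xs^i)}^2+\|\ev^i\|_{\bfM(\xs^i)}^2\bigr).
\end{aligned}
\end{equation*}

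Finally I would bound the consistency errors by $C(h^k+\tau^p)$ in the relevant norms: $\dx^n$ by Taylor expansion, and $\dv^n,\du^n$ by combining the semi-discrete defect bounds of \cite{soldriven} for the coupled system with the extrapolation error formula \eqref{peano} and the $W^{1,\infty}$/$L^2$--$H^1$ interpolation estimates of \cite{Demlow2009,highorder}, in the same manner as in Section~\ref{section: consistency}. This verifies the smallness hypotheses of Proposition~\ref{propostion: stability - regularised velocity law} under $\tau^p\le c_0h$ and closes the induction of the previous paragraph. It remains to lift the error functions to $\Gamma(t_n)$ via the composed lift $L$ of Section~\ref{section:lifts}, split off the interpolation errors $\|I_hu-u\|_{H^1}+\|I_hv-v\|_{H^1}+\|I_hX-X\|_{H^1}\le Ch^k$, and use the identities \eqref{M-L2}--\eqref{A-H1}, which yields the stated error bounds for $u$, $v$ and $x$.

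\emph{Main obstacle.} The delicate point is the bootstrap linking the a-priori $W^{1,\infty}$ control of $\widetilde e_u^n$ (and $\widetilde e_x^n$) with the energy estimate: the nonlinearities $\bff$ and $\bfg$ are only locally Lipschitz, so their difference bounds require $\widetilde e_u^n$ to be small in $W^{1,\infty}$, which in turn is only available from the inductively assumed energy bound through the inverse inequality under $\tau^p\le c_0h$. Keeping this induction consistent while simultaneously running the BDF multiplier / $G$-stability machinery for the $u$-equation, the elliptic estimate for $v$, the zero-stability estimate for $x$, and the bookkeeping for the three surfaces $\Gamma(t),\Gamma_h[\bfx],\Gamma_h[\xs]$ with extrapolated nodes, is exactly where the arguments of \cite{LubichMansourVenkataraman_bdsurf}, \cite{soldriven} and Sections~\ref{section: proof of stability}--\ref{section: consistency} must be carefully interlocked — which is why the present paper only states the result.
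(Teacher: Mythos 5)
Your proposal is correct and follows essentially the same route the paper indicates for Theorem~\ref{theorem: main-coupled}: the paper omits the details, stating that the result follows by combining the BDF energy analysis of \cite{LubichMansourVenkataraman_bdsurf} (Dahlquist $G$-stability and the Nevanlinna--Odeh multiplier for the $u$-equation, hence $p\le 5$), the semi-discrete analysis of the coupled system in \cite{soldriven}, and the stability, consistency and lifting arguments of Sections~\ref{section: proof of stability}--\ref{section: proof completed}, which is precisely the interlocking you carry out. Your bookkeeping of the coupling (the $\bfg$-difference feeding $\|\eu\|_{H^1}$ into the $\ev$-estimate, controlled only in the $\ell^2$-in-time sense, matching the $\tau\sum_j\|\cdot\|_{\bfA}^2$ terms in the asserted bounds) and the $W^{1,\infty}$ bootstrap under $\tau^p\le c_0 h$ are consistent with the paper's scheme.
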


\section{Numerical experiments}
\label{section: numerics}

\bbk
\subsection{Forced mean curvature flow}
\ebk

We performed numerical experiments for the velocity law \eqref{v-eq}: for $x=X(q,t) \in \Gamma(t)$ with $q \in \Ga_0$,
\begin{equation}
\label{eq: numerics problem}
	\begin{aligned}
		v(x,t) - \alpha \laplace_{\Ga(t)} v(x,t) =&\ -\beta H_{\Ga(t)}(x)\, \nu_{\Ga(t)}(x) +g\bigl(x,t\bigr)\, \nu_{\Ga(t)}(x), \\
		\partial_t X(q,t) =&\ v(X(q,t),t) ,
	\end{aligned}
\end{equation}
where the inhomogeneity $g:R^3 \times [0,T] \to \R$ is chosen such that the exact solution is $X(q,t)=r(t) q$, with $q$ on the unit sphere $\Gamma_0$. The function $r$ satisfies the logistic differential equation:
\begin{align*}
	\dot r\t =&\ \Big(1 - \tfrac{r_1}{r\t}\Big) r\t, \qquad t \in [0,T], \\
	r(0) =&\ r_0 ,
\end{align*}
with $r_1 \geq r_0 =1$, i.e.\ $r\t=r_0r_1 \big( r_0(1-e^{-t}) + r_1 e^{-t}\big)\inv$. 

Therefore, the velocity is simply given by, for $x\t = X(q,t)$,
\begin{align*}
	v(x\t,t) = &\ \dot x\t %= \dot X(q,t)\\
	= \dot r \t p = \Big(1 - \tfrac{r_1}{r\t}\Big) r\t p = \Big(1 - \tfrac{r_1}{r\t}\Big) x\t .
\end{align*}

The numerical experiments were performed in Matlab, using a quadratic approximation of the initial surface $\Ga_0$ and using the quadratic ESFEM implementation from \cite{highorder}, and linearly implicit BDF methods of various orders.

Let $(\mathcal{T}_k)_{k=1,2,\dotsc,m}$ and $(\tau_k)_{k=1,2,\dotsc,n}$ be a series of quadratic initial meshes and time steps, respectively, such that $2 \tau_k = \tau_{k-1}$, with $\tau_1=0.1$, \bbk where the meshes are generated independently.\ebk

We computed the fully discrete numerical solution of the above problem, with parameters $\alpha=1$ and $\beta=1$, for each mesh and stepsize using the second order BDF method and second order ESFEM.
In Figures~\ref{fig: timeconv} and \ref{fig: spaceconv} we report on the following errors of the  quadratic ESFEM / BDF2 full discretization
\begin{equation*}
	\|(x_h^n)^{L} - \mathrm{id}_{\Gamma(t_n)}\|_{L^2(\Ga(t_n))^3} \andquad \|\nbg \big( (x_h^n)^{L} - \mathrm{id}_{\Gamma(t_n)} \big) \|_{L^2(\Ga(t_n))^3} 
\end{equation*}
at time $T=N\tau=5$. The logarithmic plots show the errors against time step size $\tau$ (in Figure~\ref{fig: timeconv}), and against the mesh width $h$ (in Figure~\ref{fig: spaceconv}).

The different lines correspond to different mesh refinements and to different time step sizes in Figure \ref{fig: timeconv} and Figure~\ref{fig: spaceconv}, respectively.
In both figures we can observe two regions: In Figure~\ref{fig: timeconv}, a region where the temporal discretization error dominates, matching to the $O(\tau^2)$ order of convergence of our theoretical result, and a region, with small stepsizes, where the space discretization error dominates (the error curves are flattening out). In Figure~\ref{fig: spaceconv}, the same description applies, but with reversed roles. First the space discretization error dominates, while for finer meshes the temporal error  dominates.
The convergence in time, see Figure~\ref{fig: timeconv}, can be nicely observed in agreement with the theoretical results (note the reference line), whereas we observe better $L^2$ norm convergence rates ($O(h^3)$) for the space discretization, see Figure~\ref{fig: spaceconv}, than shown in Theorem~\ref{theorem: main} for the $H^1$ norm (only $O(h^2)$). \bbk This phenomenon is due to the fact that in the defect estimates we use the interpolation instead of a Ritz projection (which is hard to define in this setting), therefore have a defect estimate of order two. However, the classical optimal $L^2$ norm convergence rates of $O(h^3)$ are nevertheless observed.\ebk

\begin{figure}[htbp]
	\centering
	\includegraphics[width=\textwidth,height=.42\textheight]{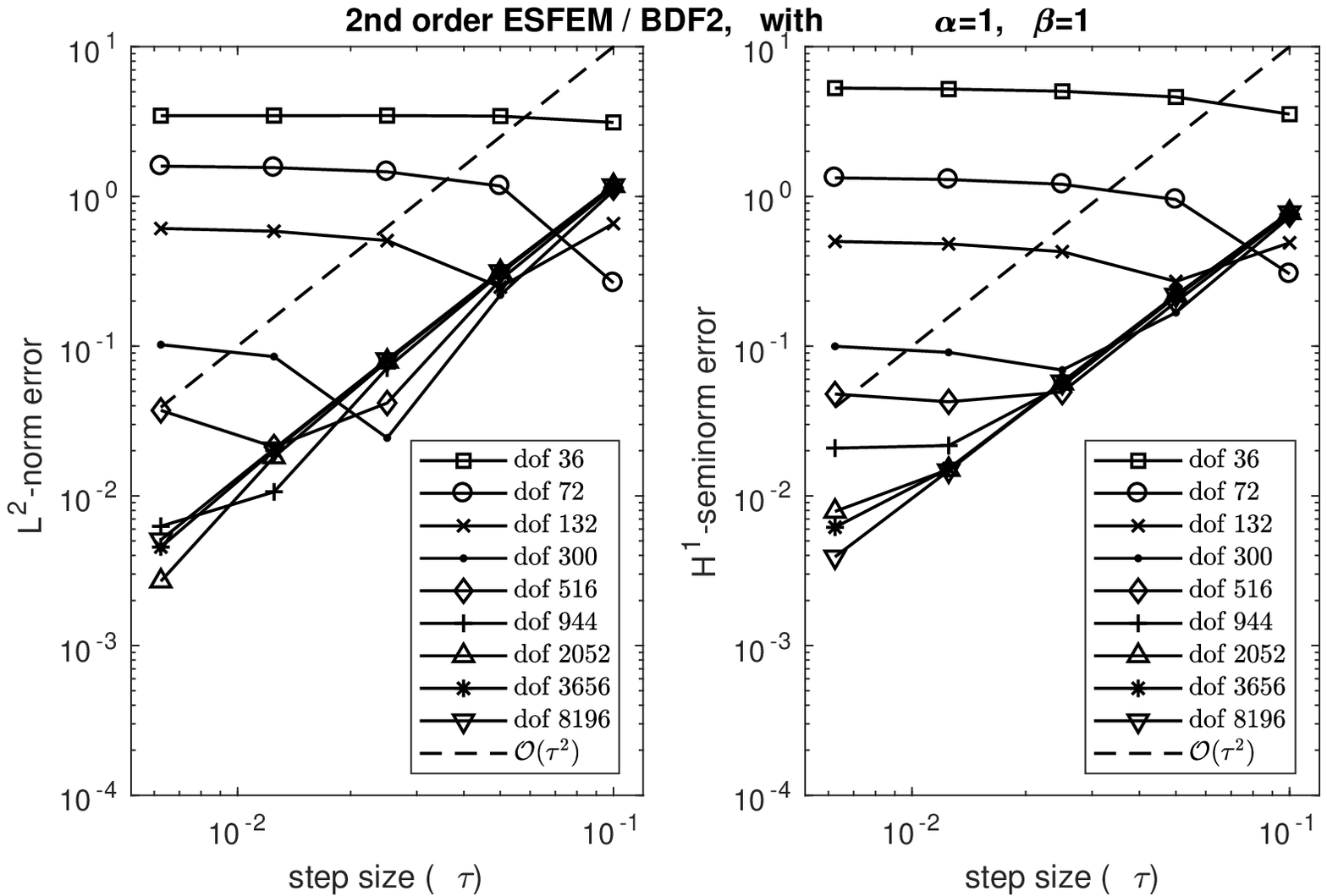}
	\caption{Temporal convergence of the BDF2 / quadratic ESFEM discretization for the surface-evolution equation \eqref{eq: numerics problem}}\label{fig: timeconv}
	\includegraphics[width=\textwidth,height=.42\textheight]{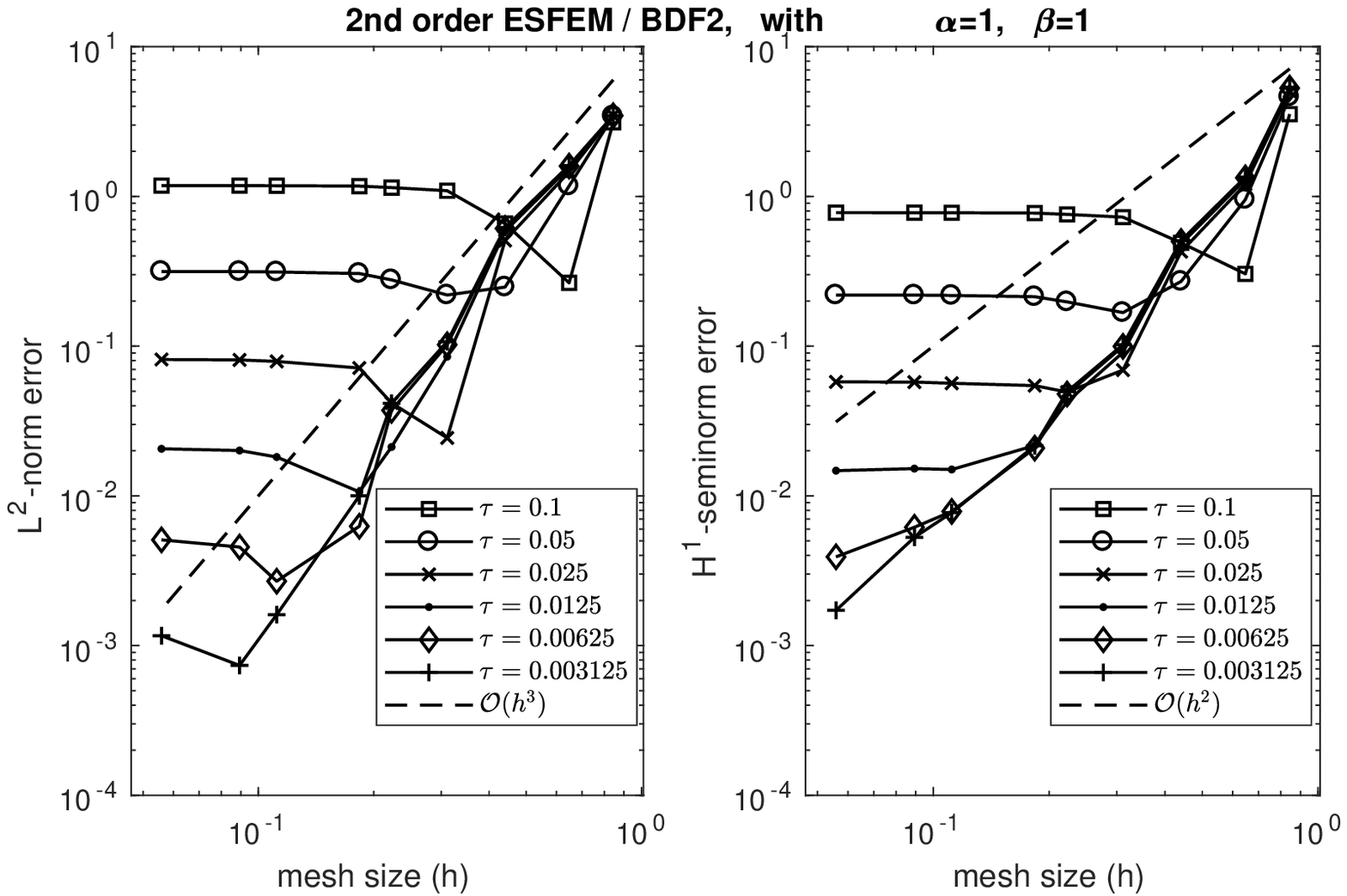}
	\caption{Spatial convergence of the BDF2 / quadratic ESFEM discretization for the surface-evolution equation \eqref{eq: numerics problem}}\label{fig: spaceconv}
\end{figure}

\clearpage

Figure~\ref{fig: BDF4_timeconv} %and \ref{fig: BDF4_spaceconv} 
shows the same errors for the BDF method of order~4. It is clearly seen that in this problem the BDF4 method gives much better accuracy than BDF2, at nearly the same computational cost.
\begin{figure}[htbp]
	\centering
	\includegraphics[width=\textwidth,height=.42\textheight]{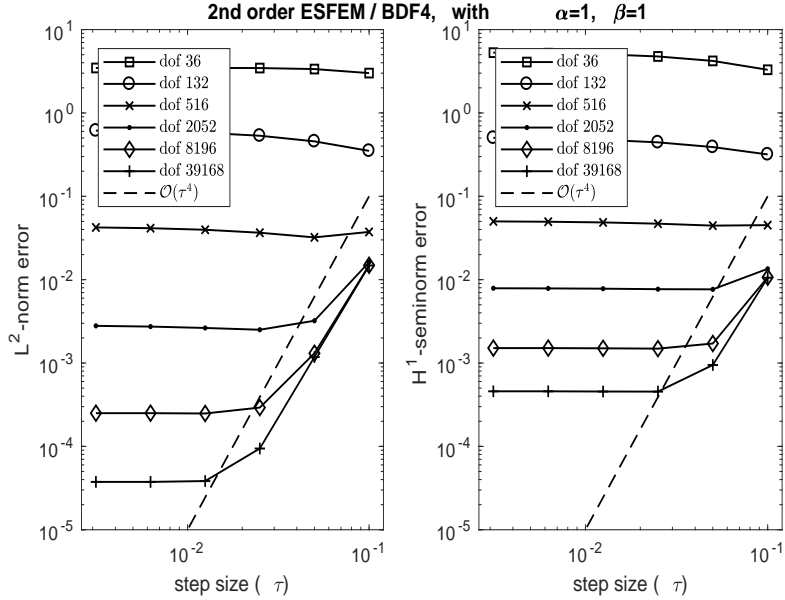}
	\caption{Temporal convergence of the BDF4 / quadratic ESFEM discretization for the surface-evolution equation \eqref{eq: numerics problem}}\label{fig: BDF4_timeconv}
	%
%	\includegraphics[width=\textwidth,height=.42\textheight]{BDF4_soldriven_spaceconv}
%	\caption{Spatial convergence of the BDF4 / quadratic ESFEM discretization for the surface-evolution equation \eqref{eq: numerics problem}}\label{fig: BDF4_spaceconv}
\end{figure}

\bbk
Numerical experiments for a semi-linear parabolic PDE system coupled to a velocity law on a surface with less symmetry, illustrating the coupled problem of Theorem~\ref{theorem: main-coupled}, are discussed in detail in our previous work \cite{soldriven}, where linearly implicit BDF methods have also been used.
\ebk
 
\bbk
\subsection{Mean curvature flow}

We also performed some numerical experiments, using mean curvature flow (MCF), to illustrate the effect of the elliptic regularisation. We again consider the problem \eqref{eq: numerics problem}, however without a forcing term, i.e.~the following form of mean curvature flow:
\begin{equation}
	\begin{aligned}
		v(x,t) - \alpha \laplace_{\Ga(t)} v(x,t) =&\ - \beta H_{\Ga(t)}(x)\, \nu_{\Ga(t)}(x) , \\
		\partial_t X(q,t) =&\ v(X(q,t),t) .
	\end{aligned}
\end{equation}
The initial surface is a rounded cube, the parameter $\beta$ is fixed to one.   Figure~\ref{fig:MCF compare} shows the results of different numerical experiments (using quadratic finite elements and BDF method of order $4$) at times $t=0, 0.2, 0.4, 0.5$ from top to bottom, while the parameter $\alpha$ is set to $0.1, 0.01, 0.001$ and $0$, from left to right, respectively. We note that our convergence results apply only to the case of a fixed positive $\alpha$, but the numerical experiments show good behaviour also for $\alpha\to 0$.

\newcommand{\subfloat}[1]{\begin{subfigure}[c]{0.24\textwidth}\includegraphics[width=\textwidth,height=.18\textheight]{#1}\end{subfigure}}

\begin{figure}[htbp]
	\centering
%	% t=0
%	\subfloat{MCF_t0}
%	\subfloat{MCF_t0}
%	\subfloat{MCF_t0}
%	\subfloat{MCF_t0}
%	\\
%	% t=0.2
%	\subfloat{MCF_alpha01_beta1_time02}
%	\subfloat{MCF_alpha001_beta1_time02}
%	\subfloat{MCF_alpha0001_beta1_time02}
%	\subfloat{MCF_alpha0_beta1_time02}
%	\\
%	% t=0.4
%	\subfloat{MCF_alpha01_beta1_time04}
%	\subfloat{MCF_alpha001_beta1_time04}
%	\subfloat{MCF_alpha0001_beta1_time04}
%	\subfloat{MCF_alpha0_beta1_time04}
%	\\
%	% t=0.5
%	\subfloat{MCF_alpha01_beta1_time05}
%	\subfloat{MCF_alpha001_beta1_time05}
%	\subfloat{MCF_alpha0001_beta1_time05}
%	\subfloat{MCF_alpha0_beta1_time05}
%	\\
%
	\includegraphics[width=\textwidth,height=\textwidth]{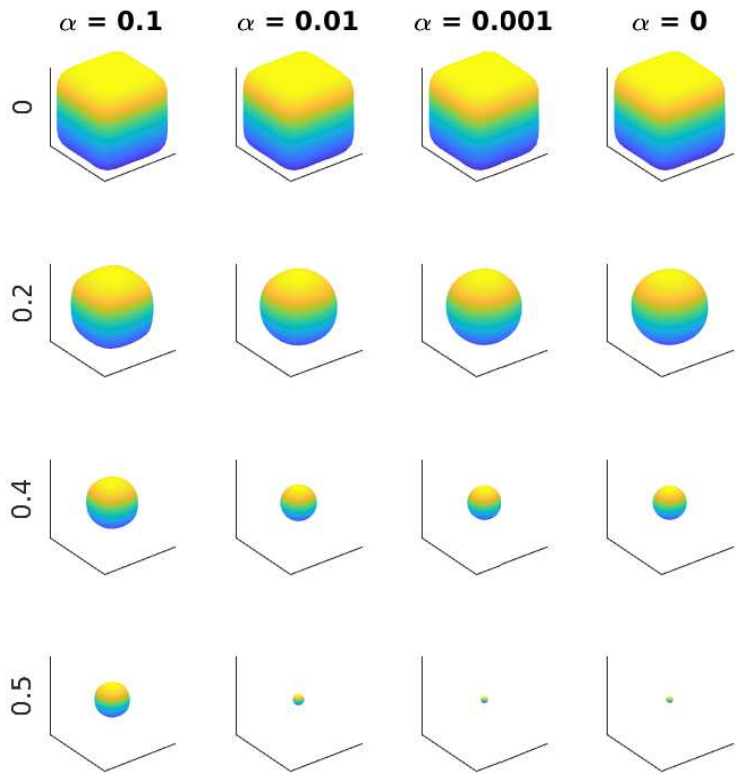}
	\caption{MCF with different values of $\alpha$ at different times}
	\label{fig:MCF compare}
\end{figure}

%\begin{subfigure}[b]{0.3\textwidth}
%	\includegraphics[width=\textwidth]{gull}
%\end{subfigure}
%\begin{subfigure}[b]{0.3\textwidth}
%	\includegraphics[width=\textwidth]{tiger}
%\end{subfigure}
%\begin{subfigure}[b]{0.3\textwidth}
%	\includegraphics[width=\textwidth]{mouse}
%\end{subfigure}

\ebk

\section*{Acknowledgement}
This work is supported by Deutsche Forschungsgemeinschaft, SFB 1173.

%\clearpage
%\bibliographystyle{alpha}
%\bibliography{evolving_surface_literature}

\end{document}